\newcommand{\noun}[1]{\textsc{#1}}
\providecommand{\tabularnewline}{\\}
\providecommand{\algorithmname}{Algorithm}
\theoremstyle{plain}
\newtheorem{thm}{\protect\theoremname}[section]
\theoremstyle{plain}
\newtheorem{fact}[thm]{\protect\factname}
\theoremstyle{plain}
\newtheorem{lem}[thm]{\protect\lemmaname}
\newtheoremstyle{mystyle}
  {}
  {}
  {\itshape}
  {}
  {\bfseries}
  {.}
  { }
  {}
\theoremstyle{mystyle}
\providecommand{\factname}{Fact}
\providecommand{\lemmaname}{Lemma}
\providecommand{\theoremname}{Theorem}
\begin{document}
\title{The Asymptotics of Wide Remedians}
\author{Philip T.\ Labo\thanks{Author contact: \protect\href{http://mailto:plabo@alumni.stanford.edu}{plabo@alumni.stanford.edu}.}}
\maketitle
\begin{abstract}
The remedian uses a $k\times b$ matrix to approximate the median
of $n\leq b^{k}$ streaming input values by recursively replacing
buffers of $b$ values with their medians, thereby ignoring its $200\left(\nicefrac{\left\lceil \nicefrac{b}{2}\right\rceil }{b}\right)^{k}\%$
most extreme inputs. \citet{RB90} and \citet{CL93,CC05} study the
remedian's distribution as $k\rightarrow\infty$ and as $k,b\rightarrow\infty$.
The remedian's breakdown point vanishes as $k\rightarrow\infty$,
but approaches $\left(\nicefrac{1}{2}\right)^{k}$ as $b\rightarrow\infty$.
We study the remedian's robust-regime distribution as $b\rightarrow\infty$,
deriving a normal distribution for standardized (mean, median, remedian,
remedian rank) as $b\rightarrow\infty$, thereby illuminating the
remedian's accuracy in approximating the sample median. We derive
the asymptotic efficiency of the remedian relative to the mean and
the median. Finally, we discuss the estimation of more than one quantile
at once, proposing an asymptotic distribution for the random vector
that results when we apply remedian estimation in parallel to the
components of i.i.d.\ random vectors.
\end{abstract}

\section{Introduction \label{sec:Introduction}}

The word \emph{remedian} refers to a collection of algorithms that
take $b^{k}$ univariate input values and recursively whittle this
down to $b^{k-1},b^{k-2},\ldots,b^{0}=1$ values, where the last value
estimates some inner quantile. As originally designated, the algorithm
uses a $k\times b$ matrix to approximate the median of data as they
stream in (\citet{RB90}). Other formulations are possible:
\begin{itemize}
\item \citet{CL93} give a collection of algorithms for approximating any
pre-specified, non-extreme quantile. Section \ref{sec:Other-Quantiles}
studies this approach.
\item \citet{BCCCH00,CH13} describe a batch remedian algorithm that does
its work in-place on a single array of length $b^{k}$. 
\end{itemize}
The central idea---quantile estimation using recursion---predates
the word itself (\citet{T78,W78}). 

\begin{algorithm}
\caption{\noun{Insert $x$ into Remedian $\mathbf{R}$}}
\label{algo:insert}
\begin{description}
\item [{Note:}] This code works so long as $n+1<b^{k}$; we recommend using
$b$ odd
\item [{Inputs:}]~
\begin{enumerate}
\item Remedian $\mathbf{R}\in\widetilde{\mathbb{R}}{}^{k\times b}$ summarizing
$\mathbf{x}\in\mathbb{R}^{n}$
\item Data value $x\in\mathbb{R}$
\end{enumerate}
\item [{Output:}] Remedian $\mathbf{R}\in\tilde{\mathbb{R}}^{k\times b}$
summarizing $\left(\mathbf{x},x\right)\in\mathbb{R}^{n+1}$
\item [{Let}] $\hat{j}\leftarrow\min\left\{ j\in\left[b\right]:\mathbf{R}\left[1,j\right]=\textrm{NA}\right\} $
and $\mathbf{R}\left[1,\hat{j}\right]\leftarrow x$
\item [{For}] $i\in\left[k\right]$:
\begin{description}
\item [{If}] $\mathbf{R}\left[i,b\right]\ne\mathrm{NA}$:
\begin{description}
\item [{Let}] $\tilde{j}\leftarrow\min\left\{ j\in\left[b\right]:\mathbf{R}\left[i+1,j\right]=\textrm{NA}\right\} $
\item [{Let}] $\mathbf{R}\left[i+1,\tilde{j}\right]\leftarrow\textrm{median}\ \mathbf{R}\left[i,1\textrm{:}b\right]$
\item [{Let}] $\mathbf{R}\left[i,j\right]=\textrm{NA}$ for each $j\in\left[b\right]$
\end{description}
\end{description}
\end{description}
\end{algorithm}

We focus on approximating the median of univariate data as they stream
in. The streaming model of computation consists of a data source that
emits one data point at a time and a core algorithm that receives
each data point, carries out some computation, adjusts its internal
storage state, and---forgetting the data point---stands ready to
receive the next data point. The remedian---with internal storage
state the $k\times b$ matrix $\mathbf{R}$---is an elegant example
of streaming computation. Numerous organizations use streaming computation
to streamline the computation of realtime information and cut down
on storage costs.

\begin{algorithm}[t]
\caption{\noun{Query Remedian $\mathbf{R}$}}
\label{algo:query}
\begin{description}
\item [{Input:}] Remedian $\mathbf{R}\in\widetilde{\mathbb{R}}{}^{k\times b}$
\item [{Outputs:}]~
\begin{enumerate}
\item The remedian of the values summarized by $\mathbf{R}$
\item The number of values $n$ summarized by $\mathbf{R}$
\item The $n_{i}\in\mathbb{Z}_{b}$ such that $n=\sum_{i=1}^{k}n_{i-1}b^{i-1}$
\end{enumerate}
\item [{Let}] $\mathbf{r}\leftarrow\left(\right)$ and $\mathbf{w}\leftarrow\left(\right)$
\item [{Let}] $\mathbf{n}\leftarrow\mathbf{0}\in\mathbb{R}^{k}$ and $n\leftarrow0$
\item [{For}] $\left(i,j\right)\in\left[k\right]\times\left[b\right]$:
\begin{description}
\item [{If}] $\mathbf{R}\left[i,j\right]\ne\textrm{NA}$:
\begin{description}
\item [{Let}] $n\leftarrow n+b^{i-1}$
\item [{Let}] $n_{i-1}\leftarrow n_{i-1}+1$
\item [{Let}] $\mathbf{r}\leftarrow\mathbf{r}\oplus\mathbf{R}\left[i,j\right]$
\item [{Let}] $\mathbf{w}\leftarrow\mathbf{w}\oplus b^{i-1}$
\end{description}
\end{description}
\item [{Let}] $\hat{\mathbf{r}}\leftarrow\mathbf{r}\left[\mathrm{order}\left(\mathbf{r}\right)\right]$
be an ordered list of the elements in $\mathbf{r}$
\item [{Let}] $\hat{\mathbf{w}}\leftarrow\mathbf{w}\left[\mathrm{order}\left(\mathbf{r}\right)\right]$
be the corresponding list of weights
\item [{Let}] $\hat{m}\leftarrow\min\left\{ m\geq1:\sum_{l=1}^{m}\hat{w}_{l}\geq\nicefrac{n}{2}\right\} $
\item [{Return}] $\hat{\mathbf{r}}\left[\hat{m}\right]$, $n$, $\mathbf{n}$
\end{description}
\end{algorithm}

The remedian uses the following approach:
\begin{enumerate}
\item The remedian's internal storage state, the $k\times b$ matrix $\mathbf{R}$,
starts out empty. Each cell initially contains the symbol NA (Table
\ref{tab:notation}).
\item Insertion is recursive: Row 1 receives raw data values from the data
source, row 2 receives the medians of raw data values, row 3 receives
the medians of the medians of raw data values, \emph{etc}. When row
$i$ ($1\leq i<k$) is full, it places the median of its values into
the first empty cell in row $i+1$ and returns all of its cells to
the empty state (Algorithm \ref{algo:insert}).
\item The remedian estimate is the weighted median of the values in $\mathbf{R}$,
where we note that a value in row $i$ represents $b^{i-1}$ input
values (Algorithm \ref{algo:query}). In theoretical settings (as
here) the remedian estimate is the value the $k$th row sends on to
the nonexistent $\left(k+1\right)$st row, \emph{i.e.}, the remedian
returns this value right after the $b^{k}$th insertion.
\end{enumerate}
While the remedian ignores a positive fraction of its most extreme
inputs, its finite-sample breakdown point falls between those of the
mean and the median. Sending $k\rightarrow\infty$ ($b\rightarrow\infty$)
sends the remedian's breakdown point to zero (a limit between those
for the mean and median). See \S\ref{subsec:Breakdown-Point}.

\subsection{How Robust is the Remedian? \label{subsec:Breakdown-Point}}

Throughout we measure the robustness of an estimator using the breakdown
concept of \citet{H68}.\footnote{Some authors feel estimators with large breakdown points hide important
information in big data settings, favoring mixture modeling instead
(see \citet{HR09} \S11.1). That said, big data settings many times
warrant the lightweight tracking we propose here.} For finite $n$, the breakdown point of an estimator $T\coloneqq T\left(x_{1},x_{2},\ldots,x_{n}\right)$
is the smallest\footnote{The minimization occurs over the set of positions $i$ containing
corrupted data values.} fraction of the $x_{i}$ values that, when sent to infinity, corrupt
the value of $T$ (\citet{DH83}). For example, 
\begin{description}
\item [{Sample\ Mean}] The breakdown point of $\bar{x}\coloneqq\frac{1}{n}\sum_{i=1}^{n}x_{i}$
is $\frac{1}{n}\stackrel{n\rightarrow\infty}{\longrightarrow}0$ because
$\lim_{x_{i}\rightarrow\infty}\bar{x}=\infty$ for any $1\leq i\leq n$. 
\item [{Sample\ Median}] The breakdown point of 
\[
\mathrm{median}\left(\mathbf{x}\right)\coloneqq\left\{ \begin{array}{rl}
\frac{1}{2}\left(x_{\left(\nicefrac{n}{2}\right)}+x_{\left(\nicefrac{n}{2}+1\right)}\right) & \textrm{ if }n\in\left[0\right]_{2}\\
x_{\left(\nicefrac{\left(n+1\right)}{2}\right)} & \textrm{ if }n\in\left[1\right]_{2}
\end{array}\right.
\]
is $\frac{1}{n}\left\lceil \frac{n}{2}\right\rceil \stackrel{n\rightarrow\infty}{\longrightarrow}\frac{1}{2}$.
When $n\in\left[1\right]_{2}$ (or, $n\in\left[0\right]_{2}$), corrupting
the median requires sending at least $\frac{n+1}{2}=\left\lceil \frac{n}{2}\right\rceil $
(or, $\frac{n}{2}=\left\lceil \frac{n}{2}\right\rceil $) values to
infinity.
\item [{Remedian}] The breakdown point of the remedian when $n=b^{k}$
is 
\begin{equation}
\epsilon^{*}\left(k,b\right)\coloneqq\left(\left.\left\lceil \nicefrac{b}{2}\right\rceil \right/b\right)^{k}\longrightarrow\left\{ \begin{array}{ll}
0 & \textrm{ as }k\rightarrow\infty\\
2^{-k} & \textrm{ as }b\rightarrow\infty.
\end{array}\right.\label{eq:breakdown_lim}
\end{equation}
Putting $k=1$ gives the sample median. When $k=2$, corrupting the
remedian requires sending at least $\left\lceil \nicefrac{b}{2}\right\rceil $
second-row values to infinity, each of which requires sending at least
$\left\lceil \nicefrac{b}{2}\right\rceil $ first-row values to infinity.
When $k=2$, we must corrupt $\left\lceil \nicefrac{b}{2}\right\rceil \times\left\lceil \nicefrac{b}{2}\right\rceil $
values, and induction gives (\ref{eq:breakdown_lim}).
\end{description}
For $b\in\mathcal{B}_{3}$, $k\geq2$, and $n=b^{k}$, we have $\epsilon_{\mathrm{median}}^{*}>\epsilon_{\mathrm{remedian}}^{*}>\epsilon_{\mathrm{mean}}^{*}$.
Increasing remedian capacity by increasing the number of rows $k$
decimates robustness. In sections \ref{sec:Asymptotic-Normality}
and \ref{sec:Other-Quantiles} we study the setting in which $k$
remains fixed while $b\rightarrow\infty$.

\subsection{Previous Work \label{subsec:Previous-Work}}

\begin{table}
\hfill{}%
\begin{tabular}{r|l|l}
 & \noun{Statistics} & \noun{Computer Science}\tabularnewline
\hline 
\multirow{4}{*}{\noun{Remedian}} & \citet{T78} & \citet{W78}\tabularnewline
 & \citet{RB90} & \citet{BCCCH00}\tabularnewline
 & \citet{CL93} & \citet{CH13}\tabularnewline
 & \citet{CC05} & \tabularnewline
\hline 
 & \citet{JJS83} & \citet{SBAS04}\tabularnewline
\noun{Quantile} & \citet{JC85} & \citet{ACHPWY13}\tabularnewline
\noun{Estimation} & \citet{T93} & \citet{KLL16}\tabularnewline
\noun{on Data} & \citet{HM95} & \citet{DE19}\tabularnewline
\noun{Streams} & \citet{LLM03} & \citet{MRL19}\tabularnewline
 & \citet{CJLVW06} & \citet{CKLTV21}\tabularnewline
 & + a few more & + many, many more\tabularnewline
\end{tabular}\hfill{}

\caption{Articles about the remedian and quantile estimation on data streams.}

\label{tab:literature}
\end{table}

While the remedian comes out of the statistics literature, the computer
science literature has a great deal more to say about quantile estimation
on data streams (Table \ref{tab:literature}). \citet{RB90} show
that the remedian has breakdown point (\ref{eq:breakdown_lim}) and
that the remedian's estimate converges in probability to the population
median under weak conditions as $k\rightarrow\infty$. Furthermore,
they show that the standardized remedian converges to a non-Gaussian
limiting distribution as $k\rightarrow\infty$. \citet{CL93} and
\citet{CC05} continue this program, showing that the remedian's estimate
converges almost surely to the population median under weak conditions
as $b$ or $k$ become large (see \S\ref{subsec:Almost-Sure-Convergence})
and that the standardized remedian converges to normality as $b$
and $k$ both become large. Section \ref{subsec:Univariate-Normality}
shows that this latter result holds when $k$ remains fixed and $b\rightarrow\infty$.

As noted, \citet{BCCCH00,CH13} describe a batch remedian that does
its work in-place on an array of length $b^{k}$. They develop an
overly complex expression for the distribution of the remedian's rank
when $b=3$. While their derivation assumes $X_{1},X_{2},\ldots,X_{3^{k}}$
iid\emph{ $F$}, its complexity makes it impossible to pursue for
$b>3$. In section \ref{subsec:Univariate-Normality} we derive the
normal distribution of the remedian's standardized rank when $b\rightarrow\infty$.

If we compare the computer science and statistics literatures on the
topic of quantile estimation on data streams, the computer science
literature has more to say, but the statistics literature does not
lack for interesting results. \citet{P81} describes a minimax tree:
data percolate up from the leaves, undergoing alternating min and
max operations. The value that reaches the root converges in probability
to $F^{-1}\left(p\right)$ as the tree becomes tall. \citet{K86,D91,LB94}
design size-$\mathcal{O}\left(\sqrt{n}\right)$ algorithms that use
the confidences intervals of \citet{JJS83,Dv84}. \citet{T93} gives
an online algorithm based on the stochastic approximation algorithm
of \citet{RM51}. \citet{JC85,LLM03} use counting methods for approximating
$F^{-1}\left(p\right)$, which \citet{R87,R90,MBLL07} adapt to approximate
$\left\{ F^{-1}\left(p_{j}\right)\right\} _{j=1}^{m}$ (\emph{cf.}\ \S\ref{subsec:Any-Quantile}).
\citet{HM95,L24} recommend using histograms.

We describe some \emph{recent} results from the computer science literature.\footnote{See \citet{CY20} for a more detailed review of some of these algorithms.}
\citet{SBAS04} introduce Q-Digest, which assumes a universe of $N<\infty$
inputs and estimates rank to within $\epsilon n$ units using space-$\mathcal{O}\left(\nicefrac{1}{\epsilon}\log N\right)$
trees. \citet{ACHPWY13} describes \emph{compactors}, sequences of
buffers that, when full, send evenly- or oddly-indexed order statistics
on to the next buffer. \citet{KLL16} use these to design space-$\mathcal{O}\left(\epsilon^{-1}\log^{2}\log\nicefrac{1}{\delta\epsilon}\right)$
summaries that approximate rank to within $\epsilon n$ units with
probability $1-\delta$, and \citet{CKLTV21} use the same to design
space-
\[
\mathcal{O}\left(\frac{\log^{1.5}\left(\epsilon n\right)}{\epsilon}\sqrt{\log\left(\frac{\log\left(\epsilon n\right)}{\delta\epsilon}\right)}\right)
\]
summaries that approximate rank so that 
\[
\Pr\left(1-\epsilon\leq\frac{\textrm{estimated rank}}{\textrm{true rank}}\leq1+\epsilon\right)\geq1-\delta.
\]
\citet{MRL19} describe DDSketch, exponential histograms that guarantee
\[
1-\epsilon\leq\frac{\textrm{estimated rank}}{\textrm{true rank}}\leq1+\epsilon
\]
for space-$\mathcal{O}\left(\log n\right)$ summaries; see \citet{L24}.
\citet{DE19} design $t$-digest, a fixed-sized summary that uses
a scaling function to focus attention on extreme quantiles.

\subsection{Our Contributions \label{subsec:Our-Contributions}}

\begin{table}[t]
\makebox[\textwidth][c]{

\begin{tabular}{rll}
\hline 
\noun{Symbol} & \noun{Meaning} & \noun{Notes}\tabularnewline
\hline 
$\mathrm{NA}$ & unassigned or missing value & as in R (\citet{R22})\tabularnewline
$a\coloneqq b$ & $a$ is defined as $b$ & or $b\eqqcolon a$\tabularnewline
$a\leftarrow b$ & $a$ is assigned as $b$ & see Algorithms \ref{algo:insert} and \ref{algo:query}\tabularnewline
$\widetilde{\mathbb{R}}$ & $\mathbb{R}\cup\left\{ \textrm{NA}\right\} $ & augmented reals\tabularnewline
$\mathbb{Z}_{+}$ & $\left\{ 1,2,\ldots\right\} $ & positive integers\tabularnewline
$\mathcal{B}_{3}$ & $\left\{ 3,5,\ldots\right\} $ & possible remedian widths\tabularnewline
$\left[l\right]$ & $\left\{ 1,2,\ldots,l\right\} $ & $l\in\mathbb{Z}_{+}$\tabularnewline
$\left[l\right]_{m}$ & $\left\{ l\pm km:k=0,1,\ldots\right\} $ & $l\in\mathbb{Z},m\in\mathbb{Z}_{+}$\tabularnewline
$\mathrm{diag}\left(\mathbf{v}\right)$ & $m\times m$ matrix $a_{i,j}=\mathbf{1}_{\left\{ i=j\right\} }v_{i}$ & $\mathbf{v}\in\mathbb{R}^{m},m\in\mathbb{Z}_{+}$\tabularnewline
$\mathbf{v}\oplus x$ & $\left(v_{1},v_{2},\ldots,v_{K-1},v_{K},x\right)^{\mathsf{T}}$ & $\mathbf{v}\in\mathbb{R}^{K},x\in\mathbb{R},K\geq1$\tabularnewline
$\Gamma\left(x\right)$ & $\int_{0}^{\infty}t^{x-1}\exp\left(-t\right)dt$ & Gamma function\tabularnewline
$B\left(x,y\right)$ & $\int_{0}^{1}t^{x-1}\left(1-t\right)^{y-1}dt=\frac{\Gamma\left(x\right)\Gamma\left(y\right)}{\Gamma\left(x+y\right)}$ & Beta function\tabularnewline
$\mathbf{1}_{A}$ & 1 if $A$ is true, 0 otherwise & indicator function\tabularnewline
$f\circ g$ & $\left(f\circ g\right)\left(x\right)\coloneqq f\left(g\left(x\right)\right)$ & function composition\tabularnewline
$a_{n}\rightarrow b$ & $\lim_{n\rightarrow\infty}a_{n}=b$ & converging sequence\tabularnewline
$a_{n}\sim b_{n}$ & $\lim_{n\rightarrow\infty}\nicefrac{a_{n}}{b_{n}}=1$ & asymptotic equivalence\tabularnewline
$F\left(x\right)$ & $\Pr\left(X\leq x\right)$ & cumulative distribution function\tabularnewline
$X_{n}\implies X$ & $\Pr\left(X_{n}\leq x\right)\longrightarrow\Pr\left(X\leq x\right)^{*}$ & convergence in distribution\tabularnewline
$\mathscr{L}\left(X\right)$ & the distribution of RV $X$ & $\mathscr{L}$ stands for law\tabularnewline
$X\stackrel{\mathscr{L}}{=}Y$ & $\Pr\left(X\leq x\right)=\Pr\left(Y\leq x\right)$, $\forall x$ & equality in distribution\tabularnewline
LHS, RHS & \multicolumn{2}{l}{abbreviations for the left- and right-hand sides of an equation}\tabularnewline
iid or i.i.d. & \multicolumn{2}{l}{abbreviation for ``independent and identically-distributed''}\tabularnewline
\hline 
\end{tabular}

}

\caption{Conventions we use. We have convergence in distribution if $*$ holds
$\forall x$ at which $\Pr\left(X\protect\leq x\right)$ is continuous.}

\label{tab:notation}
\end{table}

Organizations frequently ingest large, potentially-partially-corrupted
data sets, wishing to cheaply, but robustly, monitor each data set's
central tendency. While the large-$k$ remedian uses $\mathcal{O}\left(\log n\right)$
space, it is not robust. The large-$b$ remedian, by contrast, is
robust, but uses $\mathcal{O}\left(\sqrt[k]{n}\right)$ space. Organizations
willing to spend more on storage for the sake of robustness should
opt for the large-$b$ remedian. That said, while the literature describes
the distributions of the large-$k$ and large-$k,b$ remedians, it
does not yet describe the same for the large-$b$ remedian (\S\ref{subsec:Previous-Work};
\citet{RB90,CL93,CC05}). We derive the large-$b$ distribution of
the standardized remedian estimate, which we use to derive the large-$b$
distribution of the standardized (mean, median, remedian, remedian
rank) vector. After that we turn to the large-$b$ distribution of
the standardized vector that results when we use $\ell\geq1$ remedians
in parallel to simultaneously approximate $\ell$ distinct, internal
quantiles.

Section \ref{sec:Preliminaries} presents several preliminaries: important
assumptions, a recursive formula for the remedian's distribution,
Bahadur's formula, and the remedian's almost sure convergence to the
population median. Sections \ref{sec:Asymptotic-Normality} and \ref{sec:Other-Quantiles}
present our core results. Section \ref{sec:Asymptotic-Normality}
shows that the standardized (mean, median, remedian, remedian rank)
vector approaches normality and derives the asymptotic relative efficiencies
between the mean, the median, and the remedian. Section \ref{sec:Other-Quantiles}
takes up a related topic, proposing an expression for the large-$b$
distribution of the standardized vector that results when we use $\ell\geq1$
remedians in parallel to approximate distinct, internal quantiles;
we prove the case $\ell=1$. Section \ref{sec:Conclusions-and-Discussion}
concludes, and Table \ref{tab:notation} defines some important notation.

\section{Preliminaries \label{sec:Preliminaries}}

While sections \ref{sec:Asymptotic-Normality} and \ref{sec:Other-Quantiles}
present our core results, this section describes several preliminaries,
namely, key assumptions, a recursive formula for the remedian's distribution,
Bahadur's formula, and the remedian's almost sure convergence to the
population median. 

\subsection{Assumptions}

Throughout this paper we assume that
\begin{equation}
X_{1},X_{2},\ldots,X_{n}\stackrel{\mathrm{iid}}{\sim}F,\label{eq:Xi}
\end{equation}
for CDF $F$. While our results are nonparametric (\citet{HWC14}),
they require a well-behaved $F$. For the remedian in section \ref{sec:Asymptotic-Normality}
our results require that
\begin{equation}
\begin{array}{c}
\exists a<\bar{\mu}<b\textrm{ such that }F'\textrm{ is continuous and positive on }\left(a,b\right)\\
\textrm{and }F'\textrm{ and }F''\textrm{ are bounded on \ensuremath{\left(a,b\right)},}
\end{array}\label{eq:AssumptionCOI_half}
\end{equation}
for $\bar{\mu}\coloneqq F^{-1}\left(\nicefrac{1}{2}\right)$ the population
median. More generally, for the remedian in section \ref{sec:Other-Quantiles}
our results require that
\begin{equation}
\begin{array}{c}
\exists a<\tilde{\mu}<b\textrm{ such that }F'\textrm{ is continuous and positive on }\left(a,b\right)\\
\textrm{and }F'\textrm{ and }F''\textrm{ are bounded on \ensuremath{\left(a,b\right)},}
\end{array}\label{eq:AssumptionCOI}
\end{equation}
for $\tilde{\mu}\coloneqq F^{-1}\left(\tilde{p}\right)$ the $\tilde{p}$th
population quantile. In general we require a continuous, positive,
bounded density $f\coloneqq F'$, and a bounded $f'=F''$, on a neighborhood
containing the estimand.\footnote{The requirement involving $F''$, atypical in this setting, comes
from \citet{B66}.}

\subsection{The Remedian's Distribution \label{subsec:Remedian's-Distribution}}

Let $X_{\left(1\right)}\leq X_{\left(2\right)}\leq\cdots\leq X_{\left(n\right)}$
give the order statistics, a sorted rearrangement of the i.i.d.\ values
in (\ref{eq:Xi}) (\citet{DN03}).\footnote{To obtain a unique sorted list, one can first sort by the indices,
and then by the values.} It is well-known that 
\begin{equation}
X_{\left(j\right)}\sim B_{j,n}\circ F,\label{eq:BjnDef}
\end{equation}
for $1\leq j\leq n$ and $B_{j,n}$ the $\mathrm{Beta}\left(j,n-j+1\right)$
CDF. In particular, with $m\geq1$, $b\coloneqq2m+1$, and $0<x<1$,
$\mathrm{median}\left\{ X_{1},X_{2},\ldots,X_{b}\right\} \sim\Psi_{b}^{\left(1\right)}\circ F$,
where
\begin{equation}
\Psi_{b}^{\left(1\right)}\left(x\right)\coloneqq B_{m+1,2m+1}\left(x\right)=\frac{b!}{m!^{2}}\int_{0}^{x}\left[y\left(1-y\right)\right]^{m}dy\label{eq:Psi(1)}
\end{equation}
is the $\mathrm{Beta}\left(m+1,m+1\right)$ CDF. (Table \ref{tab:notation}
defines $f\circ g$.) By the same reasoning, the remedian of $b^{2}$
values has distribution $\Psi_{b}^{\left(2\right)}\circ F\coloneqq\Psi_{b}^{\left(1\right)}\circ\Psi_{b}^{\left(1\right)}\circ F$,
so that, in general, the remedian of $b^{k}$ values has distribution
$G_{k,b}\coloneqq\Psi_{b}^{\left(k\right)}\circ F$, where
\begin{equation}
\Psi_{b}^{\left(0\right)}\left(x\right)\coloneqq x\textrm{ and }\Psi_{b}^{\left(k\right)}\left(x\right)\coloneqq\Psi_{b}^{\left(1\right)}\left(\Psi_{b}^{\left(k-1\right)}\left(x\right)\right),\label{eq:recursive}
\end{equation}
for $k\geq1$ (\citet{RB90}). Throughout we let $R_{k,b}$ represent
the remedian's rank among the i.i.d.\ $X_{1},X_{2},\ldots,X_{b^{k}}$,
so that, in summary, we have
\[
G_{k,b}\left(x\right)\coloneqq\Pr\left(X_{\left(R_{k,b}\right)}\leq x\right)=\Psi_{b}^{\left(k\right)}\left(F\left(x\right)\right).
\]

\subsection{Bahadur's Formula \label{subsec:Bahadur's-Formula}}

In the sequel Lemma \ref{lem:bahadur66} from \citet{B66} aids in
the proofs of results both well-known (Corollaries \ref{cor:bahadurcoras},
\ref{cor:bahadurcordist}, and \ref{cor:compMedian}) and new (Theorem
\ref{thm:rem_med_to_quad_normal}). One further suspects that Lemma
\ref{lem:bahadur66} will play a role in proving Conjectures \ref{conj:compRemedian}
and \ref{conj:quantRemedian}.

\begin{restatable}{lemma}{bahadur}

\label{lem:bahadur66}Fix $\tilde{p}\coloneqq1-\tilde{q}\in\left(0,1\right)$
and $\tilde{\mu}\coloneqq F^{-1}\left(\tilde{p}\right)$. Let $Y_{n}\coloneqq X_{\left(\left\lfloor 1+\left(n-1\right)\tilde{p}\right\rfloor \right)}$
and $Z_{n}\coloneqq\sum_{i=1}^{n}\mathbf{1}_{\left\{ X_{i}>\tilde{\mu}\right\} }$.
Then, under (\ref{eq:Xi}) and (\ref{eq:AssumptionCOI}), there is
a $C>0$ such that
\[
\Pr\left(\left|\sqrt{n}f\left(\tilde{\mu}\right)\left(Y_{n}-\tilde{\mu}\right)-\frac{Z_{n}-n\tilde{q}}{\sqrt{n}}\right|\geq\frac{C\log n}{n}\textrm{ infinitely often}\right)=0.
\]

\end{restatable}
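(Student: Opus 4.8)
The plan is to read the bracketed quantity as a rescaled \emph{Bahadur remainder} and to control it by the three-part argument of \citet{B66}. Write $F_n$ for the empirical CDF of $X_1,\dots,X_n$. Since $F$ is continuous, almost surely no $X_i$ equals $\tilde\mu$, so $Z_n=n\bigl(1-F_n(\tilde\mu)\bigr)$ and $\tfrac{Z_n-n\tilde q}{\sqrt n}=-\sqrt n\bigl(F_n(\tilde\mu)-\tilde p\bigr)$; hence
\[
\sqrt n\,f(\tilde\mu)(Y_n-\tilde\mu)-\tfrac{Z_n-n\tilde q}{\sqrt n}=\sqrt n\,f(\tilde\mu)\,R_n,\qquad R_n\coloneqq(Y_n-\tilde\mu)-\frac{\tilde p-F_n(\tilde\mu)}{f(\tilde\mu)},
\]
and it is enough to bound $\sqrt n\,|R_n|$ for all large $n$, almost surely. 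By the Borel--Cantelli lemma this reduces to producing summable-in-$n$ failure probabilities at each of the steps below.

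\emph{Rate of consistency.} First I would show $\Pr\bigl(|Y_n-\tilde\mu|>a_n\text{ infinitely often}\bigr)=0$ for the window $a_n\coloneqq A\sqrt{(\log n)/n}$, $A$ large. Because $Y_n$ is the $k_n$th order statistic with $k_n\coloneqq\lfloor 1+(n-1)\tilde p\rfloor=n\tilde p+O(1)$, the event $\{Y_n>\tilde\mu+a_n\}$ forces $F_n(\tilde\mu+a_n)<k_n/n=\tilde p+O(1/n)$, whereas $F(\tilde\mu+a_n)=\tilde p+f(\tilde\mu)a_n+O(a_n^2)$ by Taylor's theorem (this is where positivity and boundedness of $f=F'$, and boundedness of $F''$, on a neighbourhood of $\tilde\mu$ enter). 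So the event is a one-point deviation of $F_n-F$ of order $a_n$, which Bernstein's inequality bounds by $\exp(-cna_n^2)=n^{-cA^2}$, summable for $A$ large; the lower tail is symmetric.

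\emph{Oscillation of the empirical process (the crux).} Put $D_n(t)\coloneqq\bigl[F_n(\tilde\mu+t)-F_n(\tilde\mu)\bigr]-\bigl[F(\tilde\mu+t)-F(\tilde\mu)\bigr]$. The heart of the proof is a maximal inequality showing $\sup_{|t|\le a_n}|D_n(t)|$ is of strictly smaller order than $a_n$, with summable failure probabilities. I would obtain it by partitioning $[-a_n,a_n]$ into polynomially many cells; at a grid point $t_j$ the centred indicator $\mathbf{1}_{\{\tilde\mu<X_i\le\tilde\mu+t_j\}}$ (or its mirror) has variance at most $f(\tilde\mu)\,a_n\asymp\sqrt{(\log n)/n}$, so Bernstein's inequality gives $\Pr\bigl(|D_n(t_j)|>\lambda\bigr)\le 2\exp\!\bigl(-cn\lambda^2/(a_n+\lambda)\bigr)$; a union bound over the grid, with $\lambda=\lambda_n$ of the target order, keeps the total probability summable in $n$, and the interpolation error between grid points is absorbed by monotonicity of $F_n$ and $F$. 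Borel--Cantelli then yields the almost sure control of $\sup_{|t|\le a_n}|D_n(t)|$. This modulus-of-continuity estimate for the empirical process on a shrinking neighbourhood is the main obstacle, and it is where the second-derivative hypothesis genuinely earns its keep --- it makes the Taylor error in the next step negligible beside the oscillation term (this is the unusual requirement flagged earlier as coming from \citet{B66}).

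\emph{Assembly.} On the full-measure event where the two preceding bounds hold, $|Y_n-\tilde\mu|\le a_n$ for all large $n$, so
\[
\tilde p+O(1/n)=F_n(Y_n)=F_n(\tilde\mu)+\bigl[F(Y_n)-F(\tilde\mu)\bigr]+D_n(Y_n-\tilde\mu),
\]
and $F(Y_n)-F(\tilde\mu)=f(\tilde\mu)(Y_n-\tilde\mu)+O(a_n^2)$ by Taylor with bounded $F''$. Rearranging gives $f(\tilde\mu)R_n=-D_n(Y_n-\tilde\mu)+O(1/n)+O(a_n^2)$, which by the crux step is of the order of $\sup_{|t|\le a_n}|D_n(t)|$ almost surely; multiplying through by $\sqrt n$ then bounds the displayed difference, eventually and almost surely, at that rescaled rate --- in particular it tends to $0$ almost surely, which is the form in which the lemma is invoked in the sequel. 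The only delicate accounting is the exact rate (including the power of $\log n$) in the crux bound; the remaining reductions are routine Taylor expansion and exponential tail inequalities.
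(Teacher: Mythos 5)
The paper does not actually prove this lemma: its ``proof'' is a one-line citation to \citet{B66}, of whose paper the statement is the main theorem. Your proposal instead reconstructs Bahadur's argument, and the reconstruction is faithful to the source: the reduction of the bracketed quantity to $\sqrt{n}\,f(\tilde{\mu})R_{n}$ with $R_{n}$ the Bahadur remainder (note $Z_{n}=n\bigl(1-F_{n}(\tilde{\mu})\bigr)$ holds identically, not just almost surely), the $\sqrt{(\log n)/n}$ localization of $Y_{n}$ via a one-point exponential bound and Borel--Cantelli, the grid-plus-Bernstein control of the oscillation $\sup_{|t|\le a_{n}}|D_{n}(t)|$ over the shrinking window, and the Taylor assembly in which the bounded $F''$ earns its keep are exactly the three steps of Bahadur's proof. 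So the route is the same as the cited source's; what you add over the paper is that you exhibit the mechanism rather than pointing at it.

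One issue you flag but leave unresolved deserves emphasis. Your argument, correctly executed, yields $\sup_{|t|\le a_{n}}|D_{n}(t)|=O\bigl(n^{-3/4}(\log n)^{3/4}\bigr)$ almost surely, hence a bound of order $n^{-1/4}(\log n)^{3/4}$ for the displayed difference --- not the $C(\log n)/n$ appearing in the statement. This is not a defect of your proof. Bahadur's theorem gives $R_{n}=O\bigl(n^{-3/4}\log n\bigr)$ a.s., so the right-hand side consistent with the cited result is $C(\log n)/n^{1/4}$; moreover Kiefer's exact characterization of the remainder (its order is $n^{-3/4}(\log\log n)^{3/4}$, with a nondegenerate limsup) shows that the bound $C(\log n)/n$ is false as stated and cannot be rescued by any argument. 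The exponent in the lemma is evidently a misprint; since Corollaries \ref{cor:bahadurcoras} and \ref{cor:bahadurcordist} only use that the difference tends to zero almost surely, nothing downstream is affected, but your write-up should assert the rate you actually obtain rather than the one displayed.
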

\begin{proof}
This is the central result of the excellent \citet{B66}.
\end{proof}
Lemma \ref{lem:bahadur66} implies that $Y_{n}\longrightarrow\tilde{\mu}$
almost surely (a.s.), as $n\rightarrow\infty$.

\begin{restatable}{corollary}{bahadurcoras}

\label{cor:bahadurcoras}Under (\ref{eq:Xi}) and (\ref{eq:AssumptionCOI})
we have $\Pr\left(\lim_{n\rightarrow\infty}Y_{n}=\tilde{\mu}\right)=1$.

\end{restatable}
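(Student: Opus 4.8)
The plan is to read the conclusion off Bahadur's formula (Lemma \ref{lem:bahadur66}) together with the strong law of large numbers. First I would observe that, since the event displayed in Lemma \ref{lem:bahadur66} has probability zero, its complement has probability one; that is, almost surely there is a (random) $N$ such that for every $n\geq N$,
\[
\left|\sqrt{n}\,f\left(\tilde{\mu}\right)\left(Y_{n}-\tilde{\mu}\right)-\frac{Z_{n}-n\tilde{q}}{\sqrt{n}}\right|<\frac{C\log n}{n}.
\]
Dividing through by $\sqrt{n}$ gives, almost surely and for all large $n$,
\[
\left|f\left(\tilde{\mu}\right)\left(Y_{n}-\tilde{\mu}\right)-\frac{Z_{n}-n\tilde{q}}{n}\right|<\frac{C\log n}{n^{3/2}}.
\]

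Next I would control the two terms on the right. The sequence $Z_{n}=\sum_{i=1}^{n}\mathbf{1}_{\left\{ X_{i}>\tilde{\mu}\right\} }$ is a partial sum of i.i.d.\ Bernoulli variables with common mean $\Pr\left(X_{1}>\tilde{\mu}\right)=1-F\left(\tilde{\mu}\right)=1-\tilde{p}=\tilde{q}$, where I use that $F$ is continuous at $\tilde{\mu}$ (which follows from (\ref{eq:AssumptionCOI})); hence the strong law of large numbers yields $\left(Z_{n}-n\tilde{q}\right)/n\rightarrow0$ almost surely. The deterministic term $C\log n/n^{3/2}\rightarrow0$ as well. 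Combining, $f\left(\tilde{\mu}\right)\left(Y_{n}-\tilde{\mu}\right)\rightarrow0$ almost surely, and since $f\left(\tilde{\mu}\right)>0$ by (\ref{eq:AssumptionCOI}) (because $\tilde{\mu}\in\left(a,b\right)$ and $F'$ is positive there), we conclude $Y_{n}\rightarrow\tilde{\mu}$ almost surely, as claimed.

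There is essentially no obstacle here beyond correctly converting the ``infinitely often'' probability-zero event into an ``eventually'' statement of full probability and then invoking the strong law; the quantitative $\mathcal{O}\left(\log n/n\right)$ rate supplied by Lemma \ref{lem:bahadur66} is far more than this qualitative corollary requires, so the only care needed is in the bookkeeping of the two error terms.
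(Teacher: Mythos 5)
Your proposal is correct and follows essentially the same route as the paper: divide Bahadur's almost-sure bound by $\sqrt{n}$ and apply the SLLN to $n^{-1}Z_{n}-\tilde{q}$, using $f\left(\tilde{\mu}\right)>0$ to conclude. Your write-up is a bit more careful than the paper's one-line argument (explicitly converting the ``infinitely often'' event and tracking both error terms), but there is no substantive difference.
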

\begin{proof}
Lemma \ref{lem:bahadur66} implies that $f\left(\tilde{\mu}\right)\left(Y_{n}-\tilde{\mu}\right)\stackrel{\mathrm{a.s.}}{\sim}n^{-1}Z_{n}-\tilde{q}\longrightarrow0$
almost surely, as $n\rightarrow\infty$, where the limit uses Slutsky's
theorem and the SLLN.
\end{proof}
Lemma \ref{lem:bahadur66} further implies that $\sqrt{n}f\left(\tilde{\mu}\right)\left(Y_{n}-\tilde{\mu}\right)\implies\mathcal{N}\left(0,\,\tilde{p}\tilde{q}\right)$,
as $n\rightarrow\infty$.

\begin{restatable}{corollary}{bahadurcordist}

\label{cor:bahadurcordist}Under (\ref{eq:Xi}) and (\ref{eq:AssumptionCOI})
we have $\sqrt{n}f\left(\tilde{\mu}\right)\left(Y_{n}-\tilde{\mu}\right)\stackrel{n\rightarrow\infty}{\implies}\mathcal{N}\left(0,\,\tilde{p}\tilde{q}\right)$.

\end{restatable}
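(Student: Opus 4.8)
The plan is to combine Lemma \ref{lem:bahadur66} with the classical central limit theorem via Slutsky's theorem. First I would isolate the Bahadur remainder
\[
D_{n}\coloneqq\sqrt{n}f\left(\tilde{\mu}\right)\left(Y_{n}-\tilde{\mu}\right)-\frac{Z_{n}-n\tilde{q}}{\sqrt{n}}.
\]
Lemma \ref{lem:bahadur66} says that, under (\ref{eq:Xi}) and (\ref{eq:AssumptionCOI}), there is a $C>0$ with $\left|D_{n}\right|\leq C\log n/n$ for all sufficiently large $n$, almost surely. Since $C\log n/n\rightarrow0$, this gives $D_{n}\rightarrow0$ almost surely, and hence $D_{n}\rightarrow0$ in probability.

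Next I would identify the distributional limit of the leading term $\left(Z_{n}-n\tilde{q}\right)/\sqrt{n}$. Because (\ref{eq:AssumptionCOI}) makes $f=F'$ continuous on a neighborhood of $\tilde{\mu}$, $F$ is continuous at $\tilde{\mu}$, so $\Pr\left(X_{i}>\tilde{\mu}\right)=1-F\left(\tilde{\mu}\right)=1-\tilde{p}=\tilde{q}$. Thus $Z_{n}=\sum_{i=1}^{n}\mathbf{1}_{\left\{ X_{i}>\tilde{\mu}\right\} }$ is a sum of $n$ i.i.d.\ $\mathrm{Bernoulli}\left(\tilde{q}\right)$ indicators with mean $\tilde{q}$ and variance $\tilde{q}\left(1-\tilde{q}\right)=\tilde{p}\tilde{q}$, and the Lindeberg--L\'evy CLT yields $\left(Z_{n}-n\tilde{q}\right)/\sqrt{n}\implies\mathcal{N}\left(0,\,\tilde{p}\tilde{q}\right)$.

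Finally I would write $\sqrt{n}f\left(\tilde{\mu}\right)\left(Y_{n}-\tilde{\mu}\right)=\left(Z_{n}-n\tilde{q}\right)/\sqrt{n}+D_{n}$ and invoke Slutsky's theorem (convergence in distribution plus convergence in probability to the constant $0$) to conclude $\sqrt{n}f\left(\tilde{\mu}\right)\left(Y_{n}-\tilde{\mu}\right)\implies\mathcal{N}\left(0,\,\tilde{p}\tilde{q}\right)$. This argument is essentially routine once Lemma \ref{lem:bahadur66} is available; the only point needing any care is confirming that $\mathrm{Var}\left(\mathbf{1}_{\left\{ X_{i}>\tilde{\mu}\right\} }\right)$ equals $\tilde{p}\tilde{q}$ exactly, which uses continuity of $F$ at $\tilde{\mu}$. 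There is no substantive obstacle here, since the heavy lifting has been delegated to Bahadur's representation.
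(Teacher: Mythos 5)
Your proposal is correct and follows the same route as the paper: isolate the Bahadur remainder via Lemma \ref{lem:bahadur66}, note it vanishes almost surely, apply the CLT to $\left(Z_{n}-n\tilde{q}\right)/\sqrt{n}$, and conclude by Slutsky. You simply spell out the steps that the paper compresses into the notation $\stackrel{\mathrm{a.s.}}{\sim}$, including the (correct) check that $\mathrm{Var}\left(\mathbf{1}_{\left\{ X_{i}>\tilde{\mu}\right\} }\right)=\tilde{p}\tilde{q}$.
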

\begin{proof}
Lemma \ref{lem:bahadur66} implies that $\sqrt{n}f\left(\tilde{\mu}\right)\left(Y_{n}-\tilde{\mu}\right)\stackrel{\mathrm{a.s.}}{\sim}\frac{Z_{n}-n\tilde{q}}{\sqrt{n}}\implies\mathcal{N}\left(0,\,\tilde{p}\tilde{q}\right)$,
as $n\rightarrow\infty$, where the limit uses the CLT.
\end{proof}

\subsection{The Remedian Converges with Probability One \label{subsec:Almost-Sure-Convergence}}

While Corollary \ref{cor:bahadurcoras} implies that $\mathrm{median}\left\{ X_{1},X_{2},\ldots,X_{n}\right\} \longrightarrow\bar{\mu}$
almost surely, as $n\rightarrow\infty$, Theorem \ref{thm:asConv}
shows that $X_{\left(R_{k,b}\right)}\longrightarrow\bar{\mu}$ almost
surely, as $b\rightarrow\infty$.

\begin{restatable}{theorem}{asRemedConv}

\label{thm:asConv}Under (\ref{eq:Xi}) and (\ref{eq:AssumptionCOI})
we have $\Pr\left(\lim_{b\rightarrow\infty}X_{\left(R_{k,b}\right)}=\bar{\mu}\right)=1$.

\end{restatable}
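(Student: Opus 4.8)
The plan is to deduce almost-sure convergence from the first Borel--Cantelli lemma together with the exact distribution $G_{k,b}=\Psi_b^{(k)}\circ F$ of the remedian recorded in \S\ref{subsec:Remedian's-Distribution}. Work on a common probability space on which, for every odd $b$, the variable $X_{(R_{k,b})}$ is built from the first $b^{k}$ terms of one i.i.d.\ $F$-sequence (the argument does not depend on how the different $b$ are coupled). Since $F$ is continuous the distribution $G_{k,b}$ is continuous, so for every $\epsilon>0$, $\Pr\!\left(X_{(R_{k,b})}\le\bar\mu-\epsilon\right)=\Psi_b^{(k)}\!\left(F(\bar\mu-\epsilon)\right)$ and $\Pr\!\left(X_{(R_{k,b})}\ge\bar\mu+\epsilon\right)=1-\Psi_b^{(k)}\!\left(F(\bar\mu+\epsilon)\right)$. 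Hence it is enough to show that for every small $\epsilon>0$ the series $\sum_{b\in\mathcal{B}_{3}}\Pr\!\left(|X_{(R_{k,b})}-\bar\mu|\ge\epsilon\right)$ converges: Borel--Cantelli then makes $\{\,|X_{(R_{k,b})}-\bar\mu|\ge\epsilon\text{ infinitely often}\,\}$ null, and intersecting these a.s.\ events over $\epsilon\in\{1,\tfrac12,\tfrac13,\ldots\}$ yields $X_{(R_{k,b})}\to\bar\mu$ a.s.

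The two tails collapse to one by symmetry. Because the $\mathrm{Beta}(m+1,m+1)$ density $[y(1-y)]^{m}$ in (\ref{eq:Psi(1)}) is symmetric about $\tfrac12$, we have $\Psi_b^{(1)}(1-x)=1-\Psi_b^{(1)}(x)$, and an induction through the composition (\ref{eq:recursive}) extends this to $\Psi_b^{(k)}(1-x)=1-\Psi_b^{(k)}(x)$. Under the hypotheses $F$ is continuous and strictly increasing on a neighborhood of $\bar\mu$, so for all small enough $\epsilon$ we have $F(\bar\mu-\epsilon)<\tfrac12<F(\bar\mu+\epsilon)$; setting $\delta\coloneqq\min\!\left(\tfrac12-F(\bar\mu-\epsilon),\,F(\bar\mu+\epsilon)-\tfrac12\right)>0$ and using the symmetry identity on the upper tail, both tail probabilities are at most $\Psi_b^{(k)}\!\left(\tfrac12-\delta\right)$. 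So everything reduces to the bound $\Psi_b^{(k)}\!\left(\tfrac12-\delta\right)\le e^{-cb}$ valid for all $b\ge B$, with constants $c=c(\delta,k)>0$ and $B=B(\delta,k)$.

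This bound is the technical core. From the standard identity $\Psi_b^{(1)}(x)=\Pr\!\left(\mathrm{Bin}(b,x)\ge m+1\right)$ with $m+1=\tfrac{b+1}{2}$, Hoeffding's inequality gives $\Psi_b^{(1)}(x)\le\exp(-2b\delta^{2})$ whenever $x\le\tfrac12-\delta$, since the mean $bx$ then sits at least $b\delta$ below the threshold $\tfrac{b+1}{2}$. Iterate: applied at $x=\tfrac12-\delta$ this gives $\Psi_b^{(1)}\!\left(\tfrac12-\delta\right)\le e^{-2b\delta^{2}}$, which drops below $\tfrac14$ once $b\ge B$; feeding that back in with $\tfrac14$ in the role of $\delta$ gives $\Psi_b^{(2)}\!\left(\tfrac12-\delta\right)\le e^{-b/8}$, and every further application of $\Psi_b^{(1)}$ to an argument $\le\tfrac14$ again returns something $\le e^{-b/8}$. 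Induction on $k$ then yields $\Psi_b^{(k)}\!\left(\tfrac12-\delta\right)\le e^{-cb}$ with $c=\min(2\delta^{2},\tfrac18)$ for $b\ge B$, and $\sum_{b\in\mathcal{B}_{3}}e^{-cb}<\infty$ completes the argument.

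The iteration in the last paragraph is the only place that needs care; the rest is bookkeeping with the distributional recursion and the symmetry of the Beta law. (One could alternatively invoke the almost-sure convergence obtained by \citet{CL93,CC05}; and for $k=1$ the claim is immediate from Corollary \ref{cor:bahadurcoras} since the remedian is then the sample median, but for $k\ge2$ the randomness of the rank $R_{k,b}$ makes the recursion $G_{k,b}=\Psi_b^{(k)}\circ F$ indispensable.)
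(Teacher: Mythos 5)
Your proof is correct, but it is a genuinely different route from the paper's: the paper disposes of this theorem by citation, pointing to Theorem 1 of \citet{CC05} (which establishes almost-sure convergence as $n\rightarrow\infty$ along sequences $b^{k}$ with $k$ fixed and $b$ odd), whereas you give a self-contained argument from first principles. Your strategy --- exact tail probabilities via $G_{k,b}=\Psi_{b}^{(k)}\circ F$, the reflection identity $\Psi_{b}^{(k)}(1-x)=1-\Psi_{b}^{(k)}(x)$ to fold the two tails into one, the binomial representation $\Psi_{b}^{(1)}(x)=\Pr(\mathrm{Bin}(b,x)\geq m+1)$ plus Hoeffding to get $\Psi_{b}^{(1)}(\nicefrac{1}{2}-\delta)\leq e^{-2b\delta^{2}}$, the bootstrapped iteration through the composition (with the harmless caveat that the induction also needs $e^{-\nicefrac{b}{8}}\leq\nicefrac{1}{4}$, which holds once $b\geq13$), and finally Borel--Cantelli --- is sound at every step, and correctly does not require any joint structure across different values of $b$ beyond their living on a common space. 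What your approach buys is more than the theorem asks for: an explicit exponential concentration bound $\Pr\left(\left|X_{\left(R_{k,b}\right)}-\bar{\mu}\right|\geq\epsilon\right)\leq2e^{-cb}$ with $c=c(\delta,k)>0$, which quantifies the rate and makes the paper self-contained on this point; what the citation buys is brevity and deference to the published literature. Two cosmetic remarks: the blanket claim that $G_{k,b}$ is continuous is slightly stronger than assumption (\ref{eq:AssumptionCOI}) warrants (continuity of $F$ is only guaranteed on a neighborhood of $\bar{\mu}$), but since you restrict to small $\epsilon$ the points $\bar{\mu}\pm\epsilon$ lie in that neighborhood and nothing breaks; and the identity $\Pr\left(\mathrm{med}\leq x\right)=\Pr\left(\mathrm{Bin}\left(b,G\left(x\right)\right)\geq m+1\right)$ in fact holds for arbitrary $G$, so the recursion you lean on needs no continuity at all.
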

\begin{proof}
Theorem 1 in
\begin{itemize}
\item \citet{RB90} proves convergence in probability as $k\rightarrow\infty$;
\item \citet{CL93} proves convergence with probability one as $k\rightarrow\infty$;
\item \citet{CC05} proves convergence with probability one as $n\rightarrow\infty$
along any sequence of odd numbers raised to a positive integer power.
\end{itemize}
\end{proof}
Theorem \ref{thm:asConv} in turn implies that $b^{-k}R_{k,b}\longrightarrow\nicefrac{1}{2}$
almost surely, as $b\rightarrow\infty$.

\begin{restatable}{corollary}{asRankConv}

Under (\ref{eq:Xi}) and (\ref{eq:AssumptionCOI}) we have $\Pr\left(\lim_{b\rightarrow\infty}b^{-k}R_{k,b}=\nicefrac{1}{2}\right)=1$.

\end{restatable}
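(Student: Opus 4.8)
The plan is to deduce everything from Theorem~\ref{thm:asConv}, which already gives $X_{\left(R_{k,b}\right)}\longrightarrow\bar{\mu}$ almost surely as $b\rightarrow\infty$, by recognizing that $b^{-k}R_{k,b}$ is essentially the empirical distribution function of $X_{1},\ldots,X_{b^{k}}$ evaluated at the remedian. Set $n\coloneqq b^{k}$, let $F_{n}\left(x\right)\coloneqq n^{-1}\sum_{i=1}^{n}\mathbf{1}_{\left\{ X_{i}\le x\right\} }$, and write $v\coloneqq X_{\left(R_{k,b}\right)}$. If $r_{0}\coloneqq\#\left\{ i:X_{i}<v\right\} $ and $c\coloneqq\#\left\{ i:X_{i}=v\right\} $, then the order-statistic indexing convention places $R_{k,b}$ among $\left\{ r_{0}+1,\ldots,r_{0}+c\right\} $, so
\[
F_{n}\!\left(v^{-}\right)\;<\;b^{-k}R_{k,b}\;\le\;F_{n}\left(v\right).
\]

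First I would control the two outer terms. Glivenko--Cantelli gives $D_{n}\coloneqq\sup_{x}\left|F_{n}\left(x\right)-F\left(x\right)\right|\longrightarrow0$ almost surely, and this persists along the subsequence of sample sizes $n=b^{k}$, $b\in\mathcal{B}_{3}$, because almost-sure convergence passes to subsequences. Hence $b^{-k}R_{k,b}\le F\left(v\right)+D_{n}$ and $b^{-k}R_{k,b}>F\!\left(v^{-}\right)-D_{n}$. Then I would use Theorem~\ref{thm:asConv} together with assumption~(\ref{eq:AssumptionCOI}): $v\rightarrow\bar{\mu}$ almost surely, and $F$ is continuous at $\bar{\mu}=F^{-1}\left(\nicefrac{1}{2}\right)$, so $F\left(v\right)$ and $F\!\left(v^{-}\right)$ both tend to $\nicefrac{1}{2}$ on the relevant almost-sure event. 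Sandwiching, $b^{-k}R_{k,b}\longrightarrow\nicefrac{1}{2}$ almost surely, which is the claim.

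The only delicate point is the gap between the strict and non-strict inequalities above, caused by ties and by the fact that $R_{k,b}$ is pinned down only after the tie-breaking rule for the order statistics is invoked. This is harmless: since $\bar{\mu}$ lies in the open interval on which $F$ has a positive density, for all large $b$ (on the almost-sure event) $v$ lies there too, and then $v$ is attained by exactly one $X_{i}$ almost surely, so $F_{n}\!\left(v^{-}\right)=F_{n}\left(v\right)-n^{-1}$ and the inequality collapses. Alternatively, when $F$ is continuous one can replace $X_{i}$ by $U_{i}\coloneqq F\left(X_{i}\right)$, note that $R_{k,b}$ is unchanged because the remedian commutes with the monotone map $F$, and apply Theorem~\ref{thm:asConv} and Glivenko--Cantelli to the uniform sample, for which $\bar{\mu}=\nicefrac{1}{2}$ and ties occur with probability zero. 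I do not expect a real obstacle here; the entire content of the corollary is Theorem~\ref{thm:asConv}, and the rest is bookkeeping plus a uniform strong law.
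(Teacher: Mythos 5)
Your proof is correct, but it runs in the opposite direction from the paper's. The paper argues by contradiction: if $\Pr\left(\lim_{b\rightarrow\infty}b^{-k}R_{k,b}=\nicefrac{1}{2}\right)<1$, then $\left|b^{-k}R_{k,b}-\nicefrac{1}{2}\right|>\epsilon$ infinitely often with positive probability, and---because the density is positive near $\bar{\mu}$, so that a rank displaced by a fixed fraction of $b^{k}$ forces the corresponding order statistic away from $\bar{\mu}$ by a fixed amount---this yields $\left|X_{\left(R_{k,b}\right)}-\bar{\mu}\right|>\epsilon'$ infinitely often with positive probability, contradicting Theorem \ref{thm:asConv}. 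You instead go forward: sandwich $b^{-k}R_{k,b}$ between $F_{n}\left(v^{-}\right)$ and $F_{n}\left(v\right)$ at $v\coloneqq X_{\left(R_{k,b}\right)}$, invoke Glivenko--Cantelli along $n=b^{k}$, and use continuity of $F$ at $\bar{\mu}$ together with Theorem \ref{thm:asConv}. The two arguments need dual pieces of uniform control: the paper's contrapositive implicitly requires that order statistics with rank near $\left(\nicefrac{1}{2}\pm\epsilon\right)b^{k}$ converge to the corresponding population quantiles (essentially Corollary \ref{cor:bahadurcoras} applied off the median), while you replace that with the uniform strong law for the empirical CDF. Your version is more explicit about the bookkeeping (the tie-breaking discussion is correct and, as you note, vacuous once $F$ is continuous near $\bar{\mu}$), and it is a direct rather than contrapositive argument; the paper's version is shorter but leaves the step from rank deviation to value deviation, equation (\ref{eq:useAssum}), largely to the reader. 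Both are sound and both rest entirely on Theorem \ref{thm:asConv}.
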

\begin{proof}
Note that $b^{-1}R_{1,b}\equiv\frac{b+1}{2b}\stackrel{b\rightarrow\infty}{\longrightarrow}\nicefrac{1}{2}$.
Assume that $k\geq2$. By contradiction assume that $\Pr\left(\lim_{b\rightarrow\infty}b^{-k}R_{k,b}=\nicefrac{1}{2}\right)<1$
\begin{align}
\implies & \exists\epsilon>0\textrm{ such that }\Pr\left(\left|b^{-k}R_{k,b}-\nicefrac{1}{2}\right|>\epsilon\textrm{ infinitely often}\right)>0\\
\implies & \exists\epsilon'>0\textrm{ such that }\Pr\left(\left|X_{\left(R_{k,b}\right)}-\bar{\mu}\right|>\epsilon'\textrm{ infinitely often}\right)>0,\label{eq:useAssum}
\end{align}
where (\ref{eq:useAssum}) uses (\ref{eq:AssumptionCOI_half}). This
contradicts Theorem \ref{thm:asConv}, completing the proof.
\end{proof}

\section{Asymptotic Normality \label{sec:Asymptotic-Normality}}

While section \ref{subsec:Almost-Sure-Convergence} considers the
almost sure convergence of $X_{\left(R_{k,b}\right)}$ and $b^{-k}R_{k,b}$,
section \ref{subsec:Univariate-Normality} shows that standardized
$X_{\left(R_{k,b}\right)}$ and $b^{-k}R_{k,b}$ converge to normality.
Section \ref{subsec:Joint-Asymptotic-Normality} further proves the
asymptotic normality of the standardized (mean, median, remedian,
remedian rank) vector, and section \ref{subsec:Asymptotic-Relative-Efficiency}
studies the asymptotic efficiency of the remedian relative to the
mean and the median.

\subsection{Univariate Asymptotic Normality \label{subsec:Univariate-Normality}}

We start with the asymptotic normality of the standardized remedian.
While Corollary \ref{cor:bahadurcordist} implies that 
\[
2b^{\nicefrac{k}{2}}f\left(\bar{\mu}\right)\left(X_{\left(\frac{b^{k}+1}{2}\right)}-\bar{\mu}\right)\stackrel[\infty]{b}{\Longrightarrow}\mathcal{N}\left(0,1\right),
\]
replacing $\frac{b^{k}+1}{2}$ with $R_{k,b}$ on the LHS replaces
1 with $\left(\nicefrac{\pi}{2}\right)^{k-1}$ on the RHS. 

\begin{restatable}{theorem}{remToNormal}

\label{thm:rem_to_normal}Under (\ref{eq:Xi}) and (\ref{eq:AssumptionCOI_half})
$2b^{\nicefrac{k}{2}}f\left(\bar{\mu}\right)\left(X_{\left(R_{k,b}\right)}-\bar{\mu}\right)\stackrel[\infty]{b}{\Longrightarrow}\mathcal{N}\left(0,\left(\nicefrac{\pi}{2}\right)^{k-1}\right)$.

\end{restatable}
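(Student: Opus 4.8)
The plan is to analyze the recursion $\Psi_b^{(k)} = \Psi_b^{(1)} \circ \Psi_b^{(k-1)}$ by tracking how a central fluctuation of size $b^{-k/2}$ propagates through each layer. The key observation is that $\Psi_b^{(1)}$ is the $\mathrm{Beta}(m+1,m+1)$ CDF with $b=2m+1$, which is sharply concentrated around $\nicefrac{1}{2}$ with scale $\Theta(b^{-1/2})$: a short computation with the Beta density gives $(\Psi_b^{(1)})'(\nicefrac{1}{2}) \sim \sqrt{b}\cdot c$ for an explicit constant, and in fact, after standardizing, $\Psi_b^{(1)}$ applied near $\nicefrac12$ behaves asymptotically like the linear map induced by a $\mathcal{N}(\nicefrac12, \sigma_b^2)$ rescaling, where one computes that the relevant local slope contributes a factor $\sqrt{\pi/2}$ per layer beyond the first. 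Composing $k$ such near-linear contractions around the fixed point $\nicefrac{1}{2}$ multiplies the variances, producing the factor $(\pi/2)^{k-1}$ (the first layer contributes the ``bare'' factor $1$, matching the $k=1$ sample-median case where the theorem reduces to Corollary~\ref{cor:bahadurcordist}).

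Concretely, I would first reduce via Corollary~\ref{cor:bahadurcordist}-style reasoning (or directly via the Bahadur representation, Lemma~\ref{lem:bahadur66}) to a statement purely about $R_{k,b}$ and the distribution $G_{k,b} = \Psi_b^{(k)}\circ F$: it suffices to show that $2b^{k/2} f(\bar\mu)\bigl(X_{(R_{k,b})} - \bar\mu\bigr)$ has the claimed limit, and by the delta-method / quantile-transform machinery underlying Bahadur's formula this is equivalent to controlling the fluctuation of $\Psi_b^{(k)}(F(x))$ near $x = \bar\mu$, i.e.\ to showing $\sqrt{b^k}\bigl(\Psi_b^{(k)}(\nicefrac12 + u/\sqrt{b^k}) - \nicefrac12\bigr)$ converges to a linear function of $u$ with the right slope. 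Then I would set up an induction on $k$: assuming $\sqrt{b^{k-1}}\bigl(\Psi_b^{(k-1)}(\nicefrac12 + v/\sqrt{b^{k-1}}) - \nicefrac12\bigr) \to \lambda_{k-1} v$ with $\lambda_{k-1}^2 = (\pi/2)^{k-2}$, plug this into $\Psi_b^{(1)}$, and use a local expansion of $\Psi_b^{(1)}$ about $\nicefrac12$ together with the asymptotic $(\Psi_b^{(1)})'(\nicefrac12) \sim \sqrt{b/(2\pi)}\cdot\,(\text{const})$ — more precisely the Stirling estimate $\frac{b!}{m!^2 4^m} \sim \sqrt{b}/\sqrt{\pi}$ — to read off $\lambda_k^2 = (\pi/2)\,\lambda_{k-1}^2$. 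Assumption~(\ref{eq:AssumptionCOI_half}), in particular the boundedness of $F''$, is what lets me pass from the abstract $\Psi_b^{(k)}\circ F$ statement back to $X_{(R_{k,b})}$ with an error that vanishes; this is exactly the role Bahadur's condition plays.

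The main obstacle is making the layer-by-layer limit rigorous rather than heuristic: each $\Psi_b^{(1)}$ is only approximately linear on the relevant window, and the window shrinks at a different rate ($b^{-1/2}$ versus $b^{-k/2}$) at each level, so I must control the second-order (and tail) terms uniformly as the argument ranges over a $b^{-k/2}$-neighborhood of $\nicefrac12$ while $b\to\infty$. I expect to handle this by establishing a uniform quadratic bound $\bigl|\Psi_b^{(1)}(\nicefrac12 + t) - \nicefrac12 - (\Psi_b^{(1)})'(\nicefrac12)\,t\bigr| \le C\, b^{3/2} t^2$ for $|t|$ small (using that the Beta$(m+1,m+1)$ density is maximized at $\nicefrac12$ and its derivative there vanishes by symmetry), so that after $k$ compositions the accumulated error is $O(b^{-1/2}\cdot \mathrm{poly}(k))$, which is negligible. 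A secondary technical point is verifying that the Gaussian limit — not merely the variance — actually emerges: this follows because the innermost layer injects the $\mathcal{N}(0, \tilde p \tilde q)$ from Corollary~\ref{cor:bahadurcordist} (here $\tilde p = \tilde q = \nicefrac12$, giving variance $\nicefrac14$, which is why the standardization uses $2b^{k/2}f(\bar\mu)$), and all subsequent layers act as the deterministic linear rescalings just analyzed, so normality is preserved by the continuous mapping theorem.
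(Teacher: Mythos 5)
Your heuristic is the right one, and it is the paper's: the factor $\left(\nicefrac{\pi}{2}\right)^{k-1}$ does come from composing $\Psi_{b}^{\left(1\right)}$ around its fixed point $\nicefrac{1}{2}$, where $\dot{\Psi}_{b}^{\left(1\right)}\left(\nicefrac{1}{2}\right)=\theta_{b}\sim\sqrt{2b/\pi}$ (note the constant: $\sqrt{2b/\pi}$, not $\sqrt{b/\pi}$ as your Stirling line suggests---that $\sqrt{2}$ is exactly the difference between $\nicefrac{\pi}{2}$ and $\pi$ per layer). But the reduction on which you build the induction is false. Since $X_{\left(R_{k,b}\right)}$ has CDF $\Psi_{b}^{\left(k\right)}\circ F$, the theorem is equivalent to $\Psi_{b}^{\left(k\right)}\left(\nicefrac{1}{2}+ub^{-\nicefrac{k}{2}}\right)\rightarrow\Phi\left(2u\left(\nicefrac{2}{\pi}\right)^{\frac{k-1}{2}}\right)$, a \emph{normal CDF} in $u$, not $\nicefrac{1}{2}$ plus a term of order $b^{-\nicefrac{k}{2}}$; consequently $\sqrt{b^{k}}\left(\Psi_{b}^{\left(k\right)}\left(\nicefrac{1}{2}+u/\sqrt{b^{k}}\right)-\nicefrac{1}{2}\right)$ diverges for $u\neq0$ rather than converging to a linear function of $u$. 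The same issue sinks the inductive hypothesis: $\Psi_{b}^{\left(k-1\right)}$ is not approximately linear on its own transition window of width $b^{-\nicefrac{\left(k-1\right)}{2}}$ (its limit there is again a normal CDF). Tracking your composition, the argument fed to the $j$th copy of $\Psi_{b}^{\left(1\right)}$ sits at distance $O\left(b^{-\nicefrac{\left(k-j+1\right)}{2}}\right)$ from $\nicefrac{1}{2}$, so at the outermost layer the window has width $O\left(b^{-\nicefrac{1}{2}}\right)$ and the Taylor remainder is of the same order as the linear term $\theta_{b}t=O\left(1\right)$. Your uniform quadratic bound therefore cannot make the last layer's error negligible---and it should not, because that $O\left(1\right)$ ``error'' is precisely the Gaussian shape the theorem asserts.

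The workable version---and the paper's---keeps the outermost median exact and linearizes only the inner composite $\Psi_{b}^{\left(k-1\right)}$, and only on the strictly narrower window $\left|t\right|=O\left(b^{-\nicefrac{k}{2}}\right)$, where the second-derivative control of Lemma \ref{lem:bound2ndDeriv} (built from Fact \ref{fact:deriv_1-2}) makes the remainder $o\left(1\right)$ relative to the linear term $\theta_{b}^{k-1}t$. The key structural step is the distributional identity of Lemma \ref{lem:PsiRem}, $\Psi_{b}^{\left(k-1\right)}\left(U_{\left(R_{k,b}:b^{k}\right)}\right)\stackrel{\mathscr{L}}{=}U_{\left(m+1:2m+1\right)}$, which says the remedian is exactly $\left(\Psi_{b}^{\left(k-1\right)}\circ F\right)^{-1}$ applied to the median of $b$ uniforms; a Berry--Esseen bound for that median (Lemma \ref{lem:medianOfUniform}) injects the normality, and the expansion of $G_{k-1,b}$ about $\bar{\mu}$ with slope $\theta_{b}^{k-1}f\left(\bar{\mu}\right)$ delivers $\left(\nicefrac{\pi}{2}\right)^{k-1}$. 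Note this places the Gaussian at the \emph{outermost} (last-row) median, with the inner $k-1$ rows acting as the deterministic rescaling---the reverse of your ``innermost layer injects the normal, subsequent layers are deterministic'' picture. Your orientation can be pushed through heuristically, but rigorously it requires propagating approximate normality through $k$ successive median-of-$b$ operations applied to $b$-dependent input laws (a triangular-array induction) that your proposal does not address and that the identity above is designed to avoid.
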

\begin{proof}
While the statement of Theorem 2 in \citet{CC05} requires that $b,k\rightarrow\infty$
as $n\coloneqq b^{k}\rightarrow\infty$, their proof works when just
$b\rightarrow\infty$. Appendix \ref{sec:CC05-Arg} presents a slightly
modified version of \citet{CC05}'s argument.
\end{proof}
Our asymptotic results reflect that the remedian \emph{is} the median
when $k=1$. After that, each additional remedian buffer increases
the asymptotic variance of the remedian over that of the median by
an additional factor of $\nicefrac{\pi}{2}\approx1.57$ to account
for the increasing noisiness of $R_{k,b}$ as $k$ grows (\emph{cf.}\ Theorem
\ref{thm:rem_rank_to_normal}).

\subsubsection{The Iterated Remedian \label{subsec:The-Iterated-Remedian}}

Say we have, not one remedian matrix (as above), but $m\geq1$ remedian
matrices $\left\{ \mathbf{R}_{j}\right\} _{j=1}^{m}$, where $\mathbf{R}_{j}\in\widetilde{\mathbb{R}}^{k_{j}\times b_{t,j}}$,
for $\left(k_{j},b_{t,j}\right)\in\mathbb{Z}_{+}\times\mathcal{B}_{3}$.
What is the distribution of the estimate that results when we string
these together, so that the output of $\mathbf{R}_{j}$ becomes the
input of $\mathbf{R}_{j+1}$, $1\leq j<m$? In particular, we imagine
that:
\begin{enumerate}
\item For $1\leq i\leq\prod_{j=1}^{m}b_{t,j}^{k_{j}}$, the i.i.d.\ inputs
$X_{i}$ flow into the first row of $\mathbf{R}_{1}$;
\item For $1\leq j<m$, the outputs of $\mathbf{R}_{j}$ flow into the first
row of $\mathbf{R}_{j+1}$; and
\item The output of $\mathbf{R}_{m}$, which we call $Y_{\mathbf{k},\mathbf{b}_{t}}$,
gives us our final estimate.
\end{enumerate}
The following lemma shows that, if $b_{t,i}\sim C_{i,j}b_{t,j}\rightarrow\infty$,
for $1\leq i,j\leq m$, then 

\[
2f\left(\bar{\mu}\right)\left(\prod_{j=1}^{m}b_{t,j}^{\nicefrac{k_{j}}{2}}\right)\left\{ Y_{\mathbf{k},\mathbf{b}_{t}}-\bar{\mu}\right\} \stackrel[\infty]{t}{\implies}\mathcal{N}\left(0,\left(\nicefrac{\pi}{2}\right)^{\sum_{j=1}^{m}k_{j}-1}\right).
\]

\begin{restatable}{lemma}{iterRemToNormal}

\label{lem:iter_rem_to_normal}Fix $m\geq1$, $\mathbf{k}\in\mathbb{Z}_{+}^{m}$,
and $\mathbf{b}_{t}\in\mathcal{B}_{3}^{m}$ such that $\lim_{t\rightarrow\infty}b_{t,j}=\infty$
and $\lim_{t\rightarrow\infty}\nicefrac{b_{t,i}}{b_{t,j}}\eqqcolon C_{i,j}\in\left(0,\infty\right)$,
for $1\leq i,j\leq m$. Under (\ref{eq:Xi}) and (\ref{eq:AssumptionCOI_half})
we have
\begin{equation}
\Psi_{b_{t,m}}^{\left(k_{m}\right)}\circ\Psi_{b_{t,m-1}}^{\left(k_{m-1}\right)}\circ\cdots\circ\Psi_{b_{t,1}}^{\left(k_{1}\right)}\circ F\left(\bar{\mu}+\frac{\left(\nicefrac{\pi}{2}\right)^{\frac{\sum_{j=1}^{m}k_{j}-1}{2}}y}{2f\left(\bar{\mu}\right)\prod_{j=1}^{m}b_{t,j}^{\nicefrac{k_{j}}{2}}}\right)\stackrel{t\rightarrow\infty}{\longrightarrow}\Phi\left(y\right).\label{eq:iterLim}
\end{equation}

\end{restatable}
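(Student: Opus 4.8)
The plan is to induct on $m$, with Theorem~\ref{thm:rem_to_normal} serving as the base case $m=1$: since $X_{\left(R_{k,b}\right)}\sim\Psi_{b}^{\left(k\right)}\circ F$ and convergence in distribution to the everywhere-continuous $\Phi$ is the same as pointwise convergence of the distribution function, Theorem~\ref{thm:rem_to_normal} is exactly (\ref{eq:iterLim}) with $m=1$. For the inductive step put $K_{m-1}\coloneqq\sum_{j=1}^{m-1}k_{j}$, let $H_{t}\coloneqq\Psi_{b_{t,m-1}}^{\left(k_{m-1}\right)}\circ\cdots\circ\Psi_{b_{t,1}}^{\left(k_{1}\right)}\circ F$, and set
\[
\sigma_{t}\coloneqq\frac{\left(\nicefrac{\pi}{2}\right)^{\left(K_{m-1}-1\right)/2}}{2f\left(\bar{\mu}\right)\prod_{j<m}b_{t,j}^{\nicefrac{k_{j}}{2}}},\qquad x_{t}\left(y\right)\coloneqq\bar{\mu}+\sigma_{t}\left(\nicefrac{\pi}{2}\right)^{k_{m}/2}y\,b_{t,m}^{-k_{m}/2},
\]
so that the left side of (\ref{eq:iterLim}) equals $\Psi_{b_{t,m}}^{\left(k_{m}\right)}\bigl(H_{t}\left(x_{t}\left(y\right)\right)\bigr)$; the standardizing product $\prod_{j\le m}b_{t,j}^{\nicefrac{k_{j}}{2}}$ and exponent $\sum_{j\le m}k_{j}-1$ are just bookkeeping --- each buffer after the first inflates the limiting variance by $\nicefrac{\pi}{\left(2b\right)}$, as in Theorem~\ref{thm:rem_to_normal} --- and the hypothesis $b_{t,i}\sim C_{i,j}b_{t,j}$ forces all the buffer widths to diverge together, which makes this single standardization legitimate. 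Finally, each $\Psi_{b}^{\left(1\right)}$ is symmetric about $\nicefrac{1}{2}$ (equation (\ref{eq:Psi(1)})), so it fixes $\nicefrac{1}{2}$, whence $H_{t}\left(\bar{\mu}\right)=\nicefrac{1}{2}$.

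Two facts about $\Psi_{b_{t,m}}^{\left(k_{m}\right)}$ near $\nicefrac{1}{2}$ then drive the step. First, applying Theorem~\ref{thm:rem_to_normal} to the \emph{fixed} regular distribution function $\Phi$ (median $0$; density $\phi$ with $\phi,\phi'$ bounded) gives $\Psi_{b_{t,m}}^{\left(k_{m}\right)}\bigl(\Phi\bigl(\left(\nicefrac{\pi}{2}\right)^{k_{m}/2}z\,b_{t,m}^{-k_{m}/2}\bigr)\bigr)\to\Phi\left(z\right)$, which together with $\Phi\left(x\right)=\nicefrac{1}{2}+\phi\left(0\right)x+O\left(x^{3}\right)$ shows that $\Psi_{b_{t,m}}^{\left(k_{m}\right)}$ evaluated at $\nicefrac{1}{2}+\phi\left(0\right)\left(\nicefrac{\pi}{2}\right)^{k_{m}/2}z\,b_{t,m}^{-k_{m}/2}+O\bigl(b_{t,m}^{-3k_{m}/2}\bigr)$ tends to $\Phi\left(z\right)$. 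Second, the chain rule and the explicit density in (\ref{eq:Psi(1)}) bound the Lipschitz constant of $\Psi_{b_{t,m}}^{\left(k_{m}\right)}$ by $\bigl\Vert\bigl(\Psi_{b_{t,m}}^{\left(1\right)}\bigr)'\bigr\Vert_{\infty}^{k_{m}}$, and Stirling's formula gives $\bigl\Vert\bigl(\Psi_{b}^{\left(1\right)}\bigr)'\bigr\Vert_{\infty}=O\bigl(\sqrt{b}\bigr)$ (the Beta density at its mode), so this Lipschitz constant is $O\bigl(b_{t,m}^{k_{m}/2}\bigr)$. Granting the expansion
\[
H_{t}\left(x_{t}\left(y\right)\right)=\nicefrac{1}{2}+\phi\left(0\right)\left(\nicefrac{\pi}{2}\right)^{k_{m}/2}y\,b_{t,m}^{-k_{m}/2}+o\bigl(b_{t,m}^{-k_{m}/2}\bigr),
\]
this point differs from the argument in the first fact (with $z=y$) by $o\bigl(b_{t,m}^{-k_{m}/2}\bigr)$; the Lipschitz bound turns that gap into an $o\left(1\right)$ discrepancy in $\Psi_{b_{t,m}}^{\left(k_{m}\right)}$-values, so $\Psi_{b_{t,m}}^{\left(k_{m}\right)}\bigl(H_{t}\left(x_{t}\left(y\right)\right)\bigr)\to\Phi\left(y\right)$, which is (\ref{eq:iterLim}) at level $m$.

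The crux, and the main obstacle, is exactly that displayed expansion: the raw inductive hypothesis gives only $H_{t}\left(\bar{\mu}+\sigma_{t}w\right)\to\Phi\left(w\right)$ pointwise in $w$, which --- even with P\'olya's uniformity on compacts --- does not pin $H_{t}\left(\bar{\mu}+\sigma_{t}w_{t}\right)-\nicefrac{1}{2}$ to the \emph{relative} precision $o\bigl(b_{t,m}^{-k_{m}/2}\bigr)$ needed along the vanishing sequence $w_{t}=\left(\nicefrac{\pi}{2}\right)^{k_{m}/2}y\,b_{t,m}^{-k_{m}/2}$. I would therefore strengthen the induction to also carry the first-order local statement $\sigma_{t}H_{t}'\left(\bar{\mu}+\sigma_{t}w\right)\to\phi\left(w\right)$, uniformly for $w$ in compacts: integrating this against $w_{t}\to0$ and using $H_{t}\left(\bar{\mu}\right)=\nicefrac{1}{2}$ produces the expansion, while differentiating the composition $\Psi_{b_{t,m}}^{\left(k_{m}\right)}\circ H_{t}$ and using the $C^{1}$ counterpart of the first fact re-establishes the local statement at level $m$ (it survives composition with the smooth maps $\Psi_{b}^{\left(1\right)}$). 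At the base level this local statement is the $C^{1}$ analogue of Theorem~\ref{thm:rem_to_normal}; it follows from the explicit Beta form (\ref{eq:Psi(1)}) --- writing the integrand near the mode as $4^{-\left(b-1\right)/2}\bigl(1-4s^{2}\bigr)^{\left(b-1\right)/2}$ and passing to its Gaussian limit with uniform control --- combined with a Taylor expansion of $F$ about $\bar{\mu}$, which is where the boundedness of $F'$ and $F''$ in (\ref{eq:AssumptionCOI_half}) enters. Alternatively one can sidestep the explicit Beta computation entirely by invoking Bahadur's representation (Lemma~\ref{lem:bahadur66}) at each of the $\sum_{j}k_{j}$ buffer levels, whose $O\bigl(n^{-1}\log n\bigr)$ a.s.\ error is negligible on every scale that appears in the argument.
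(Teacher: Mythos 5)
Your inductive skeleton is the same as the paper's: Theorem \ref{thm:rem_to_normal} is the base case $m=1$, and one peels off the outermost $\Psi_{b_{t,m}}^{\left(k_{m}\right)}$ in the inductive step. Where you diverge is in how the inductive step is closed. The paper freezes the last width $b_{m}$, sends $t\rightarrow\infty$ using only the continuity of $\Psi_{b_{m}}^{\left(k_{m}\right)}$ (Fact \ref{fact:continuous_R}) together with the inductive hypothesis evaluated at $w=\left(\nicefrac{\pi}{2b_{m}}\right)^{\nicefrac{k_{m}}{2}}y$, and then sends $b_{m}\rightarrow\infty$ by applying Theorem \ref{thm:rem_to_normal} to $F=\Phi$ --- an iterated limit. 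You refuse to separate the limits, and your diagnosis of why that matters is correct and worth stating plainly: the raw inductive hypothesis is a pointwise (even, by P\'olya, locally uniform) convergence statement at scale $\sigma_{t}$, and evaluating it along the shrinking sequence $w_{t}=\mathcal{O}\bigl(b_{t,m}^{-k_{m}/2}\bigr)$ only yields $H_{t}\left(x_{t}\left(y\right)\right)\rightarrow\nicefrac{1}{2}$, whereas the outer map magnifies deviations from $\nicefrac{1}{2}$ by a Lipschitz factor $\theta_{b_{t,m}}^{k_{m}}=\mathcal{O}\bigl(b_{t,m}^{k_{m}/2}\bigr)$, so one genuinely needs the relative expansion $H_{t}\left(x_{t}\left(y\right)\right)=\nicefrac{1}{2}+\phi\left(0\right)\left(\nicefrac{\pi}{2}\right)^{k_{m}/2}y\,b_{t,m}^{-k_{m}/2}+o\bigl(b_{t,m}^{-k_{m}/2}\bigr)$. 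This is precisely the limit-interchange issue the paper's two-stage argument does not address, so your route is the more careful one. Your mechanism for exploiting the expansion --- the $\Phi$-version of Theorem \ref{thm:rem_to_normal} plus the bound $\bigl\Vert\dot{\Psi}_{b}^{\left(k\right)}\bigr\Vert_{\infty}\leq\theta_{b}^{k}\sim\left(\nicefrac{2b}{\pi}\right)^{\nicefrac{k}{2}}$ from Facts \ref{fact:deriv_1-2} and \ref{fact:beta_b} --- is sound.

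The gap is that the crux of your argument is assumed rather than proved. Everything hinges on the displayed expansion of $H_{t}\left(x_{t}\left(y\right)\right)$, which you introduce with ``granting,'' and the strengthened $C^{1}$ induction ($\sigma_{t}H_{t}'\left(\bar{\mu}+\sigma_{t}w\right)\rightarrow\phi\left(w\right)$ uniformly on compacts) that would deliver it is only sketched. That statement is not a corollary of Theorem \ref{thm:rem_to_normal}; establishing it at the base level is essentially the content of the expansions (\ref{eq:expand_G})--(\ref{eq:bound_Rdotdot}) in Appendix \ref{sec:CC05-Arg} (control of $\dot{\Psi}_{b}^{\left(k-1\right)}\left(F\left(x_{1,b}\right)\right)/\theta_{b}^{k-1}$ via Lemma \ref{lem:bound2ndDeriv}), and propagating the required uniformity through the composition at each level of the induction is real work that your proposal does not carry out. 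The alternative you mention --- invoking Bahadur's representation at every buffer level --- is also not immediate, since Lemma \ref{lem:bahadur66} concerns sample quantiles of i.i.d.\ data and would need to be re-justified for the medians-of-medians arrays. So: right structure, a correctly identified subtlety that the paper itself glosses over, but an incomplete proof whose central estimate remains to be established.
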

\begin{proof}
The proof, which uses Theorem \ref{thm:rem_to_normal}, appears in
Appendix \ref{sec:Iterated_Remedians}.
\end{proof}
A batch interpretation of Lemma \ref{lem:iter_rem_to_normal} involves
a random tensor $\left(X_{i_{1},i_{2},\ldots,i_{m}}\right)$---the
$X_{i_{1},i_{2},\ldots,i_{m}}$ are i.i.d.\ $F$---taking on values
in $\mathbb{R}^{b_{t,1}\times b_{t,2}\times\cdots\times b_{t,m}}$.
Letting
\begin{align*}
X_{i_{2},i_{3,}\ldots,i_{m}}^{\left(1\right)} & \coloneqq\mathrm{median}\left\{ X_{1,i_{2},\ldots,i_{m}},X_{2,i_{2},\ldots,i_{m}},\ldots,X_{b_{t,1},i_{2},\ldots,i_{m}}\right\} \\
X_{i_{3},i_{4},\ldots,i_{m}}^{\left(2\right)} & \coloneqq\mathrm{median}\left\{ X_{1,i_{3},\ldots,i_{m}}^{\left(1\right)},X_{2,i_{3},\ldots,i_{m}}^{\left(1\right)},\ldots,X_{b_{t,2},i_{3},\ldots,i_{m}}^{\left(1\right)}\right\} \\
 & \:\:\:\vdots\qquad\qquad\qquad\qquad\vdots\qquad\qquad\qquad\qquad\vdots\\
X^{\left(m\right)} & \coloneqq\mathrm{median}\left\{ X_{1}^{\left(m-1\right)},X_{2}^{\left(m-1\right)},\ldots,X_{b_{t,m}}^{\left(m-1\right)}\right\} ,
\end{align*}
for $\left(i_{j},i_{j+1},\ldots,i_{m}\right)\in\left[b_{t,j}\right]\times\left[b_{t,j+1}\right]\times\cdots\times\left[b_{t,m}\right]$,
we note that $X^{\left(m\right)}\stackrel{\mathscr{L}}{=}Y_{\mathbf{1}_{m},\mathbf{b}_{t}}$,
for $\mathbf{1}_{m}$ the vector of ones in $\mathbb{R}^{m}$, so
that Lemma \ref{lem:iter_rem_to_normal} applies. Note finally that,
by (\ref{eq:recursive}), Lemma \ref{lem:iter_rem_to_normal} with
$\mathbf{1}_{m}$ in place of $\mathbf{k}\in\mathbb{Z}_{+}^{m}$ retains
its full generality. 

Theorem \ref{thm:rem_to_normal} yields to the same batch interpretation
(put $b_{t,j}=b$), and the iterated remedian has breakdown point
\[
\prod_{j=1}^{m}\left(\left.\left\lceil \nicefrac{b_{t,j}}{2}\right\rceil \right/b_{t,j}\right)^{k_{j}},
\]
generalizing (\ref{eq:breakdown_lim}). Additional matrices, like
additional rows, decimate robustness.

\subsubsection{The Remedian's Rank \label{subsec:The-Remedian's-Rank}}

\begin{table}
\hfill{}%
\begin{tabular}{r|rrrrrrrrr}
$r$ & 1 & 2 & 3 & 4 & 5 & 6 & 7 & 8 & 9\tabularnewline
\hline 
$14\times\Pr\left(R_{2,3}=r\right)$ & 0 & 0 & 0 & 3 & 8 & 3 & 0 & 0 & 0\tabularnewline
\end{tabular}\hfill{}

\caption{$\mathscr{L}\left(R_{2,3}\right)$ from \citet{T78}. Values assume
continuous $F$. Note that $\left\lceil \nicefrac{3}{2}\right\rceil ^{2}=4$
implies that the possible ranks are $\left\{ 4,5,6\right\} $; see
section \ref{subsec:Breakdown-Point}.}

\label{tab:LR23}
\end{table}

Interest in $\mathscr{L}\left(R_{k,b}\right)$ dates to \citet{T78,RB90}.
\citet{T78} in particular uses combinatorics to derive $\mathscr{L}\left(R_{2,3}\right)$
(Table \ref{tab:LR23}). \citet{CH13} generalize this, deriving a
complex expression for $\mathscr{L}\left(R_{k,3}\right)$. We generalize
this in the direction of wide remedians, deriving a simple expression
for $\lim_{b\rightarrow\infty}\mathscr{L}\left(2b^{\nicefrac{k}{2}}\left(\nicefrac{R_{k,b}}{b^{k}}-\nicefrac{1}{2}\right)\right)$,
which holds for any $k\geq1$.

\begin{restatable}{theorem}{remedRankToNormal}

\label{thm:rem_rank_to_normal}Under (\ref{eq:Xi}) and (\ref{eq:AssumptionCOI_half})
$2b^{\nicefrac{k}{2}}\left(\nicefrac{R_{k,b}}{b^{k}}-\nicefrac{1}{2}\right)\stackrel[\infty]{b}{\Longrightarrow}\mathcal{N}\left(0,\left(\nicefrac{\pi}{2}\right)^{k-1}-1\right)$.

\end{restatable}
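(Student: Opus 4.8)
The plan is to exploit the fact that, for $b=2m+1$ odd, the (theoretical) remedian is a comparison-based selection rule: its output is always one of the inputs $X_{1},\dots,X_{b^{k}}$, and \emph{which} input depends on the sample only through the relative order of its entries. Hence the remedian's rank $R_{k,b}$ is a deterministic function of the uniform random permutation recording that order; in particular $R_{k,b}$ is independent of the order statistics $(X_{(1)},\dots,X_{(n)})$, $n\coloneqq b^{k}$, and the law of $R_{k,b}$ does not depend on which continuous $F$ generated the data. I would therefore take $F$ to be the standard uniform law (so $\bar\mu=\tfrac{1}{2}$ and $f\equiv1$), write $X_{(j)}=U_{(j)}\sim\mathrm{Beta}(j,n-j+1)$, and recall from \S\ref{subsec:Remedian's-Distribution} that the remedian $M\coloneqq U_{(R_{k,b})}$ has CDF $\Psi_{b}^{(k)}$. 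The central decomposition is
\[
\sqrt{n}\Bigl(M-\tfrac{1}{2}\Bigr)\;=\;\underbrace{\sqrt{n}\Bigl(\tfrac{R_{k,b}}{n}-\tfrac{1}{2}\Bigr)}_{=:\ \xi_{n}}\;+\;\underbrace{\sqrt{n}\Bigl(U_{(R_{k,b})}-\tfrac{R_{k,b}}{n}\Bigr)}_{=:\ \eta_{n}},
\]
in which $\xi_{n}$---which is half the quantity we must control---depends on the permutation alone, while $\eta_{n}$ couples the permutation to the order statistics.

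The next step is to reduce $\eta_{n}$ to a piece that is \emph{exactly} independent of $\xi_{n}$. Since $b^{-k}R_{k,b}\to\tfrac{1}{2}$ almost surely (the corollary following Theorem~\ref{thm:asConv}) and $\xi_{n}$ is tight---a one-line second-moment bound gives $\mathbb{E}[\eta_{n}^{2}]\le1$, while $\xi_{n}+\eta_{n}=\sqrt{n}(M-\tfrac{1}{2})$ converges by Theorem~\ref{thm:rem_to_normal}, so $\xi_{n}=(\xi_{n}+\eta_{n})-\eta_{n}$ is tight---we have $|R_{k,b}-n/2|=O_{P}(\sqrt{n})$. A uniform spacings estimate for the uniform quantile process then gives, over the range $|j-n/2|\le K\sqrt{n}$,
\[
\sqrt{n}\Bigl(U_{(j)}-\tfrac{j}{n}\Bigr)\;=\;\sqrt{n}\Bigl(U_{(\lceil n/2\rceil)}-\tfrac{1}{2}\Bigr)+o_{P}(1)\;=:\;W_{n}+o_{P}(1),
\]
so that $\eta_{n}=W_{n}+o_{P}(1)$. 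The point of this is that $W_{n}$ is a function of the order statistics alone, whence $\xi_{n}\perp W_{n}$, and $W_{n}\implies\mathcal{N}(0,\tfrac{1}{4})$ by Corollary~\ref{cor:bahadurcordist} (the case $\tilde p=\tfrac{1}{2}$, i.e.\ the ordinary sample median).

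To finish I would combine these through characteristic functions. Theorem~\ref{thm:rem_to_normal} with $F=\mathrm{Unif}(0,1)$ gives $2b^{k/2}(M-\tfrac{1}{2})\implies\mathcal{N}(0,(\nicefrac{\pi}{2})^{k-1})$, i.e.\ $\mathbb{E}[e^{it\sqrt{n}(M-1/2)}]\to\exp(-\tfrac{t^{2}}{8}(\nicefrac{\pi}{2})^{k-1})$ for every $t$. By $\eta_{n}=W_{n}+o_{P}(1)$ and $\xi_{n}\perp W_{n}$, the left-hand side equals $\mathbb{E}[e^{it\xi_{n}}]\,\mathbb{E}[e^{itW_{n}}]+o(1)$, and $\mathbb{E}[e^{itW_{n}}]\to e^{-t^{2}/8}\ne0$; therefore $\mathbb{E}[e^{it\xi_{n}}]\to\exp(-\tfrac{t^{2}}{8}\{(\nicefrac{\pi}{2})^{k-1}-1\})$. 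Since $(\nicefrac{\pi}{2})^{k-1}\ge1$, the right-hand side is the characteristic function of $\mathcal{N}(0,\tfrac{1}{4}\{(\nicefrac{\pi}{2})^{k-1}-1\})$---degenerate exactly when $k=1$, consistent with $R_{1,b}\equiv\tfrac{b+1}{2}$---so L\'evy's continuity theorem yields $\xi_{n}\implies\mathcal{N}(0,\tfrac{1}{4}\{(\nicefrac{\pi}{2})^{k-1}-1\})$, and multiplying by $2$ gives the theorem.

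The step I expect to be the main obstacle is the uniform spacings estimate: one must show that the uniform quantile process oscillates by $o_{P}(1)$ over intervals of length $O(n^{-1/2})$ about $\tfrac{1}{2}$. This is a mild instance of the Bahadur--Kiefer phenomenon; it can be handled by a second-moment computation for the spacings $U_{(j)}-U_{(\lceil n/2\rceil)}$ together with a maximal inequality (the deviation from $(j-\lceil n/2\rceil)/n$ is $O_{P}(n^{-3/4}\sqrt{\log n})$ uniformly over the relevant range), or by invoking the known modulus of continuity of the empirical/quantile process. If one prefers to keep a general $F$ and skip the reduction to the uniform law, one Taylor-expands $F$ at $\bar\mu$ (using the bound on $F''$ from~(\ref{eq:AssumptionCOI_half})) to pass between $\sqrt{n}f(\bar\mu)(M-\bar\mu)$ and $\sqrt{n}(F(M)-\tfrac{1}{2})$, and likewise between the sample median and $W_{n}$; the combinatorial independence of $R_{k,b}$ from the order statistics enters in exactly the same place, and the whole computation can equivalently be organized around the mixture identity $\Psi_{b}^{(k)}=\mathbb{E}\,B_{R_{k,b},n}$ and the normal approximation to the Beta CDF.
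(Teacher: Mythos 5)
Your argument is correct, and it reaches the theorem by a genuinely different route than the paper's. The paper also reduces to uniform inputs and exploits the same structural fact you do---that conditionally on $R_{k,b}=j$ the remedian value is $U_{(j)}\sim\mathrm{Beta}(j,\,b^{k}+1-j)$---but it then runs a \emph{conditioning} argument: it picks deterministic sequences $r_{j}$ in the standardized support converging to an arbitrary $r$ (Lemma \ref{lem:existLim}), proves a CLT for Beta laws along such sequences (Lemma \ref{lem:DirK2}) to get $\left(\left.\overline{U}_{k,j}\right|\overline{R}_{k,j}=r_{j}\right)\Longrightarrow\mathcal{N}\left(r,1\right)$, and finally deconvolves the known marginal $\mathcal{N}\left(0,\left(\nicefrac{\pi}{2}\right)^{k-1}\right)$ against this conditional kernel via a Fourier-inversion lemma (Lemma \ref{lem:Marg-Lik}). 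You instead make the decomposition \emph{additive}: $\sqrt{n}\left(U_{(R_{k,b})}-\nicefrac{1}{2}\right)=\xi_{n}+\eta_{n}$ with $\xi_{n}$ a function of the permutation alone, replace $\eta_{n}$ by the standardized sample median $W_{n}$ using a quantile-process oscillation bound, invoke the exact independence of permutation and order statistics to factor the characteristic function, and divide by the nonvanishing limit of $\mathbb{E}\left[e^{itW_{n}}\right]$. Both are deconvolution arguments that subtract a unit of ``order-statistic noise'' from the $\left(\nicefrac{\pi}{2}\right)^{k-1}$ limit of Theorem \ref{thm:rem_to_normal}; the trade-off is that the paper's conditioning avoids any uniform-in-$j$ estimate (the Beta law is exact given the rank, so only a pointwise CLT along sequences is needed), whereas your route avoids the conditional-to-marginal limit interchange and the explicit Gaussian convolution integral at the cost of the Bahadur--Kiefer-type modulus bound $\sup_{|j-n/2|\leq K\sqrt{n}}\sqrt{n}\left|U_{(j)}-U_{(\lceil n/2\rceil)}-\left(j-\lceil n/2\rceil\right)/n\right|=o_{P}\left(1\right)$, which you correctly flag as the one step still to be written out (it is standard, and your proposed second-moment-plus-maximal-inequality treatment of the exchangeable spacings would close it). Your tightness bootstrap for $\xi_{n}$ and the final division of characteristic functions are both sound, including the observation that $\left(\nicefrac{\pi}{2}\right)^{k-1}\geq1$ guarantees the quotient is a valid (possibly degenerate, when $k=1$) Gaussian characteristic function.
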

\begin{proof}
The proof, which uses Theorem \ref{thm:rem_to_normal}, appears in
Appendix \ref{sec:Rank_to_Normal}.
\end{proof}
Noting that $R_{1,b}\equiv\frac{b+1}{2}$ confirms the result for
$k=1$ (\emph{cf.}\ Theorem \ref{thm:rem_to_normal}). For $b$ large,
we have
\[
R_{k,b}\stackrel{\cdot}{\sim}\mathcal{N}\left(\frac{b^{k}+1}{2},\frac{b^{k}}{4}\left[\left(\nicefrac{\pi}{2}\right)^{k-1}-1\right]\right),
\]
so that, in this setting, the mean and variance of $R_{k,b}$ grow
exponentially in $k$, as expected. Note that, given the ranks of
the $X_{i}$, the remedian's rank follows deterministically; \emph{i.e.},
$R_{k,b}$ depends on $\left\{ X_{i}\right\} _{i=1}^{n}$ only through
$\left\{ \mathrm{rank}\left(X_{i}\right)\right\} _{i=1}^{n}$. The
order statistics provide no additional information: $\mathscr{L}\left(R_{k,b}\right)$
does not depend on $\bar{\mu}$ (\emph{cf.}\ Theorem \ref{thm:rem_to_normal}).

Finally, while \citet{CH13} study $\mathscr{L}\left(\left|R_{k,b}-\frac{b^{k}+1}{2}\right|\right)$
(see Tables 3 through 5), we note that:

\begin{restatable}{definition}{halfnormaldef}

\label{def:halfNormal}For $X\sim\mathcal{N}\left(0,\sigma^{2}\right)$,
$\left|X\right|$ is half-normal with $\mathbb{E}\left|X\right|=\sigma\sqrt{\nicefrac{2}{\pi}}$
and $\mathrm{Var}\left(\left|X\right|\right)=\sigma^{2}\left(1-\nicefrac{2}{\pi}\right)$
(\citet{LNN61}), \emph{i.e.}, $\left|X\right|\sim\mathcal{HN}\left(\sigma^{2}\right)$.

\end{restatable}

\begin{restatable}{corollary}{halfnormal}

Under (\ref{eq:Xi}) and (\ref{eq:AssumptionCOI_half}) $2b^{\nicefrac{k}{2}}\left|\nicefrac{R_{k,b}}{b^{k}}-\nicefrac{1}{2}\right|\stackrel[\infty]{b}{\Longrightarrow}\mathcal{HN}\left(\left(\nicefrac{\pi}{2}\right)^{k-1}-1\right)$,
which implies that, as $b\rightarrow\infty$,
\begin{align}
\mathbb{E}\left|R_{k,b}-\frac{b^{k}+1}{2}\right| & \sim\sqrt{\frac{b^{k}}{2\pi}\left[\left(\nicefrac{\pi}{2}\right)^{k-1}-1\right]}\label{eq:meanDevR}\\
\mathrm{Var}\left(\left|R_{k,b}-\frac{b^{k}+1}{2}\right|\right) & \sim\frac{b^{k}\left(1-\nicefrac{2}{\pi}\right)}{4}\left[\left(\nicefrac{\pi}{2}\right)^{k-1}-1\right].\label{eq:sdDevR}
\end{align}

\end{restatable}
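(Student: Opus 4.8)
The plan is to obtain the claimed half-normal limit directly from Theorem~\ref{thm:rem_rank_to_normal} by the continuous mapping theorem, and then to upgrade this weak convergence to convergence of the first two moments --- which is exactly what (\ref{eq:meanDevR}) and (\ref{eq:sdDevR}) assert --- via a uniform integrability argument. Applying the continuous mapping theorem with the continuous map $x\mapsto\left|x\right|$ to Theorem~\ref{thm:rem_rank_to_normal} gives $2b^{\nicefrac{k}{2}}\left|\nicefrac{R_{k,b}}{b^{k}}-\nicefrac{1}{2}\right|\stackrel[\infty]{b}{\Longrightarrow}\left|N\right|$ with $N\sim\mathcal{N}\bigl(0,\left(\nicefrac{\pi}{2}\right)^{k-1}-1\bigr)$, and by Definition~\ref{def:halfNormal} the law of $\left|N\right|$ is $\mathcal{HN}\bigl(\left(\nicefrac{\pi}{2}\right)^{k-1}-1\bigr)$, with $\mathbb{E}\left|N\right|=\sqrt{\nicefrac{2}{\pi}}\sqrt{\left(\nicefrac{\pi}{2}\right)^{k-1}-1}$ and $\mathrm{Var}\left|N\right|=\bigl(1-\nicefrac{2}{\pi}\bigr)\bigl(\left(\nicefrac{\pi}{2}\right)^{k-1}-1\bigr)$. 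This settles the displayed convergence. The case $k=1$ is degenerate, since $R_{1,b}\equiv\nicefrac{\left(b+1\right)}{2}$ makes both sides of (\ref{eq:meanDevR}) and (\ref{eq:sdDevR}) identically zero, so the substantive case is $k\geq2$.

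To deduce (\ref{eq:meanDevR}) and (\ref{eq:sdDevR}) I would pass this weak convergence to convergence of first and second moments. Two bookkeeping points are needed. First, $2b^{\nicefrac{k}{2}}\left|\nicefrac{R_{k,b}}{b^{k}}-\nicefrac{1}{2}\right|=2b^{-\nicefrac{k}{2}}\left|R_{k,b}-\nicefrac{b^{k}}{2}\right|$, and since $b$ is odd $\left|R_{k,b}-\nicefrac{\left(b^{k}+1\right)}{2}\right|$ differs from $\left|R_{k,b}-\nicefrac{b^{k}}{2}\right|$ by exactly $\nicefrac{1}{2}$, so $\left|R_{k,b}-\nicefrac{\left(b^{k}+1\right)}{2}\right|=\tfrac{1}{2}b^{\nicefrac{k}{2}}\bigl(2b^{\nicefrac{k}{2}}\left|\nicefrac{R_{k,b}}{b^{k}}-\nicefrac{1}{2}\right|\bigr)+O\bigl(1\bigr)$; taking expectations and variances and using the moment limits then yields $\mathbb{E}\left|R_{k,b}-\nicefrac{\left(b^{k}+1\right)}{2}\right|\sim\tfrac{1}{2}b^{\nicefrac{k}{2}}\mathbb{E}\left|N\right|=\sqrt{\frac{b^{k}}{2\pi}\left[\left(\nicefrac{\pi}{2}\right)^{k-1}-1\right]}$ and $\mathrm{Var}\bigl(\left|R_{k,b}-\nicefrac{\left(b^{k}+1\right)}{2}\right|\bigr)\sim\tfrac{b^{k}}{4}\mathrm{Var}\left|N\right|=\tfrac{b^{k}}{4}\bigl(1-\nicefrac{2}{\pi}\bigr)\left[\left(\nicefrac{\pi}{2}\right)^{k-1}-1\right]$, where for $k\geq2$ the $O\bigl(1\bigr)$ shift between the two centerings is absorbed into the asymptotic equivalence because the corresponding mean and variance both diverge (for the variance one also bounds the cross term by Cauchy--Schwarz). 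Second, for the moment limits themselves, convergence in distribution together with uniform integrability of the squares $\bigl\{\bigl(2b^{\nicefrac{k}{2}}\left(\nicefrac{R_{k,b}}{b^{k}}-\nicefrac{1}{2}\right)\bigr)^{2}\bigr\}_{b}$ gives exactly the convergence of the first and second moments to those of $\left|N\right|$.

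That uniform integrability is the main obstacle. The crude range $R_{k,b}\in\bigl[\left\lceil\nicefrac{b}{2}\right\rceil^{k},\,b^{k}-\left\lceil\nicefrac{b}{2}\right\rceil^{k}+1\bigr]$ only bounds $2b^{\nicefrac{k}{2}}\left|\nicefrac{R_{k,b}}{b^{k}}-\nicefrac{1}{2}\right|$ by $O\bigl(b^{\nicefrac{k}{2}}\bigr)$, which is useless, so I would instead prove the uniform fourth-moment bound $\sup_{b}\mathbb{E}\bigl[\bigl(2b^{\nicefrac{k}{2}}\left(\nicefrac{R_{k,b}}{b^{k}}-\nicefrac{1}{2}\right)\bigr)^{4}\bigr]<\infty$ by recycling the decomposition underlying the proof of Theorem~\ref{thm:rem_rank_to_normal} in Appendix~\ref{sec:Rank_to_Normal}, which writes $2b^{\nicefrac{k}{2}}\left(\nicefrac{R_{k,b}}{b^{k}}-\nicefrac{1}{2}\right)$ as the sum of: the i.i.d.\ average $b^{-\nicefrac{k}{2}}\sum_{i=1}^{b^{k}}\bigl(2\mathbf{1}_{\left\{X_{i}\leq\bar{\mu}\right\}}-1\bigr)$, whose fourth moment is $O\bigl(1\bigr)$ uniformly in $b$ by the elementary fourth-moment formula for sums of i.i.d.\ bounded mean-zero variables; the term $2b^{\nicefrac{k}{2}}f\left(\bar{\mu}\right)\bigl(X_{\left(R_{k,b}\right)}-\bar{\mu}\bigr)$, whose fourth moment is $O\bigl(1\bigr)$ because $F\bigl(X_{\left(R_{k,b}\right)}\bigr)$ has law $\Psi_{b}^{\left(k\right)}$ with $\int_{0}^{1}\bigl(y-\nicefrac{1}{2}\bigr)^{4}d\Psi_{b}^{\left(k\right)}\left(y\right)=O\bigl(b^{-2k}\bigr)$ --- obtained by iterating the $\mathrm{Beta}\left(m+1,m+1\right)$ estimate $\int_{0}^{1}\bigl(y-\nicefrac{1}{2}\bigr)^{4}d\Psi_{b}^{\left(1\right)}\left(y\right)=O\bigl(b^{-2}\bigr)$ through the recursion (\ref{eq:recursive}) with the variance-contraction bounds already used for Theorem~\ref{thm:rem_rank_to_normal}, plus a Hoeffding-type bound on the mass of $\Psi_{b}^{\left(k\right)}$ away from $\nicefrac{1}{2}$ to handle the contribution of $X_{\left(R_{k,b}\right)}$ outside the neighborhood in (\ref{eq:AssumptionCOI_half}); and a Bahadur remainder that is $o_{p}\left(1\right)$ and whose $L^{4}$ norm is controlled by the same concentration estimates. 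Minkowski's inequality then gives the uniform fourth-moment bound, hence uniform integrability of the squares, completing the argument.
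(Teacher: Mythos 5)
The paper offers no proof of this corollary---it is presented as an immediate consequence of Theorem \ref{thm:rem_rank_to_normal} and Definition \ref{def:halfNormal}---and your first step, the continuous mapping theorem applied to $x\mapsto\left|x\right|$ followed by reading off the half-normal mean and variance, is exactly that implicit argument. Your real contribution is the observation that (\ref{eq:meanDevR}) and (\ref{eq:sdDevR}) are statements about moments, which weak convergence alone does not deliver without uniform integrability; the paper silently elides this, so you have identified a genuine gap in the stated implication rather than merely re-deriving it. Your bookkeeping for the change of centering is also fine (the shift between $\left|R_{k,b}-\nicefrac{b^{k}}{2}\right|$ and $\left|R_{k,b}-\nicefrac{\left(b^{k}+1\right)}{2}\right|$ is $\pm\nicefrac{1}{2}$ depending on sign, not exactly $\nicefrac{1}{2}$, but your $O\left(1\right)$ absorbs it), as is the remark that $k=1$ is degenerate.

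The weak link is your route to the uniform fourth-moment bound. The decomposition you attribute to Appendix \ref{sec:Rank_to_Normal} is not what that appendix contains: the proof of Theorem \ref{thm:rem_rank_to_normal} conditions on $R_{k,b}$, uses $\left(\left.U_{\left(R_{k,b}\right)}\right|R_{k,b}\right)\sim\mathrm{Beta}\left(R_{k,b},b^{k}+1-R_{k,b}\right)$, and deconvolves via Lemma \ref{lem:Marg-Lik}; the Bahadur decomposition appears only in the proof of Theorem \ref{thm:rem_med_to_quad_normal}. More seriously, Lemma \ref{lem:bahadur66} is an almost-sure statement and supplies no $L^{4}$ control of its remainder, so the claim that the remainder's $L^{4}$ norm ``is controlled by the same concentration estimates'' is an assertion that would itself require a proof. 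A cleaner path that stays inside the paper's toolkit: by conditional Jensen's inequality,
\[
\mathbb{E}\left[\left(\frac{R_{k,b}}{b^{k}+1}-\frac{1}{2}\right)^{4}\right]=\mathbb{E}\left[\left(\mathbb{E}\left[\left.U_{\left(R_{k,b}\right)}\right|R_{k,b}\right]-\frac{1}{2}\right)^{4}\right]\leq\mathbb{E}\left[\left(U_{\left(R_{k,b}\right)}-\frac{1}{2}\right)^{4}\right]=\int_{0}^{1}\left(y-\frac{1}{2}\right)^{4}d\Psi_{b}^{\left(k\right)}\left(y\right),
\]
so the entire uniform-integrability question reduces to showing $\int_{0}^{1}\left(y-\nicefrac{1}{2}\right)^{4}d\Psi_{b}^{\left(k\right)}\left(y\right)=O\left(b^{-2k}\right)$, with no Bahadur remainder to control. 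That bound in turn follows by iterating (\ref{eq:recursive}), though one then needs a lower bound on $\dot{\Psi}_{b}^{\left(k-1\right)}$ near $\nicefrac{1}{2}$ (Fact \ref{fact:deriv_1-2} only gives upper bounds) together with a tail estimate for $\Psi_{b}^{\left(1\right)}$ away from $\nicefrac{1}{2}$. In short: your plan is correct in outline and more careful than the paper itself, but the fourth-moment bound as sketched is not yet a proof, and the cited decomposition does not exist where you say it does.
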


While our results assume $b$ large (Figure \ref{fig:meanSDR}), they
complement and expand on those in \citet{CH13}, who focus on $\left(k,b\right)\in\left\{ 2,3,\ldots,9\right\} \times\left\{ 3,5,7\right\} $.

\begin{figure}
\hfill{}\includegraphics[scale=0.5]{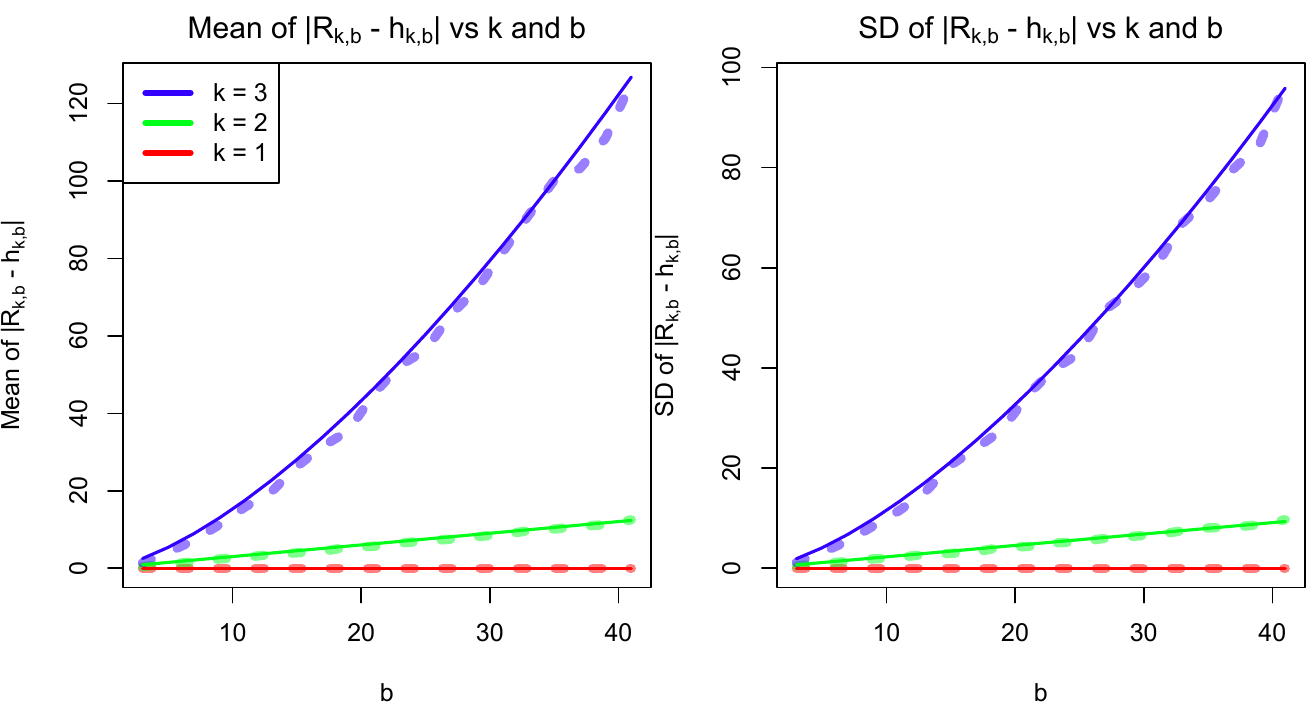}\hfill{}

\caption{Mean and standard deviation of $\left|R_{k,b}-h_{k,b}\right|$, where
$h_{k,b}\protect\coloneqq\nicefrac{\left(b^{k}+1\right)}{2}$. Solid
lines use (\ref{eq:meanDevR})--(\ref{eq:sdDevR}) while dotted lines
give the mean and standard deviation of $\min\left(b^{k}!,1000\right)$
simulated $\left|R_{k,b}-h_{k,b}\right|$ values (using continuous
$F$). We plot $\left(k,b\right)\in\left\{ 1,2,3\right\} \times\left\{ 3,5,\ldots,41\right\} $.
Note that $k=1$ gives the median itself.}

\label{fig:meanSDR}
\end{figure}

\subsection{Multivariate Asymptotic Normality \label{subsec:Joint-Asymptotic-Normality}}

Above we consider the univariate convergence of the standardized median
(\S\ref{subsec:Bahadur's-Formula}), remedian (\S\ref{subsec:Univariate-Normality}),
and remedian rank (\S\ref{subsec:The-Remedian's-Rank}). The central
limit theorem (CLT) gives the univariate convergence of the standardized
mean $\bar{X}_{n}\coloneqq n^{-1}\sum_{i=1}^{n}X_{i}$. We now show
that the standardized (mean, median, remedian, remedian rank) vector
approaches quadrivariate normality. In so doing we derive the asymptotic
correlations between all pairs of standardized variables.

\begin{restatable}{theorem}{remMedToQuadNormal}

\label{thm:rem_med_to_quad_normal}Under (\ref{eq:Xi}) and (\ref{eq:AssumptionCOI_half})
and $0<\sigma^{2}\coloneqq\mathrm{Var}\left(X_{1}\right)<\infty$
we have
\begin{equation}
b^{\nicefrac{k}{2}}\left(\begin{array}{r}
\bar{X}_{b^{k}}-\mu\\
X_{\left(h_{k,b}\right)}-\bar{\mu}\\
X_{\left(R_{k,b}\right)}-\bar{\mu}\\
b^{-k}R_{k,b}-\nicefrac{1}{2}
\end{array}\right)\stackrel[\infty]{b}{\Longrightarrow}\mathcal{N}_{4}\left(\left(\begin{array}{c}
0\\
0\\
0\\
0
\end{array}\right),\mathbf{D}\left(\begin{array}{cccc}
\sigma^{2} & \eta & \eta & 0\\
\eta & 1 & 1 & 0\\
\eta & 1 & \tau_{k}^{2} & \bar{\tau}_{k}^{2}\\
0 & 0 & \bar{\tau}_{k}^{2} & \bar{\tau}_{k}^{2}
\end{array}\right)\mathbf{D}\right),\label{eq:quad-var-lim}
\end{equation}
where $\mu\coloneqq\mathbb{E}X_{1}$, $h_{k,b}\coloneqq\frac{b^{k}+1}{2}$,
$\tau_{k}^{2}\coloneqq\left(\nicefrac{\pi}{2}\right)^{k-1}\eqqcolon1+\bar{\tau}_{k}^{2}$,
$\eta\coloneqq\mathbb{E}\left|X_{1}-\bar{\mu}\right|$, and $\mathbf{D}\coloneqq\mathrm{diag}\left(1,2f\left(\bar{\mu}\right),2f\left(\bar{\mu}\right),2\right)^{-1}$.

\end{restatable}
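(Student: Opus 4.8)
The plan is to reduce the four–dimensional limit to one trivariate central limit theorem, by combining Bahadur's representation (Lemma~\ref{lem:bahadur66}), applied in a form uniform over ranks at the \emph{random} rank \(R_{k,b}\), with the fact that \(R_{k,b}\) depends on the data only through its ranks. Write \(n\coloneqq b^{k}\) (odd, so \(h_{k,b}=\tfrac{n+1}{2}=\lfloor 1+\tfrac{n-1}{2}\rfloor\)), \(e\coloneqq f(\bar\mu)\), \(Z_{n}\coloneqq\sum_{i=1}^{n}\mathbf{1}_{\left\{ X_{i}>\bar\mu\right\} }\), and set
\[
V_{n}\coloneqq\frac{Z_{n}-\nicefrac{n}{2}}{e\sqrt{n}},\qquad W_{n}\coloneqq\frac{1}{e}\,b^{\nicefrac{k}{2}}\!\left(\nicefrac{R_{k,b}}{b^{k}}-\tfrac{1}{2}\right)=\frac{R_{k,b}-\nicefrac{n}{2}}{e\sqrt{n}}.
\]
Two of the four standardized coordinates are immediate: \(b^{\nicefrac{k}{2}}(\bar X_{n}-\mu)\implies\mathcal N(0,\sigma^{2})\) by the CLT, and the fourth coordinate \(b^{\nicefrac{k}{2}}(\nicefrac{R_{k,b}}{b^{k}}-\tfrac{1}{2})\) is \emph{exactly} \(eW_{n}\). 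Lemma~\ref{lem:bahadur66} with \(\tilde p=\tfrac{1}{2}\) (for which \(Y_{n}=X_{(h_{k,b})}\)) gives the median coordinate, \(b^{\nicefrac{k}{2}}(X_{(h_{k,b})}-\bar\mu)=V_{n}+o_{\mathrm{a.s.}}(1)\). The whole weight of the theorem sits on the remedian coordinate, for which I would establish the Bahadur-type representation
\begin{equation*}
b^{\nicefrac{k}{2}}\bigl(X_{(R_{k,b})}-\bar\mu\bigr)=V_{n}+W_{n}+o_{P}(1).\tag{$\star$}
\end{equation*}

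To prove \((\star)\) I would invoke a version of Bahadur's representation uniform over a \(\sqrt{n}\)-window of ranks around \(n/2\): under~(\ref{eq:AssumptionCOI_half}), for every fixed \(C>0\),
\[
X_{(m)}-\bar\mu=\frac{\nicefrac{m}{n}-\mathbb F_{n}(\bar\mu)}{e}+o_{P}\!\bigl(n^{-\nicefrac{1}{2}}\bigr)\quad\text{uniformly over }\bigl\{ m\in\mathbb Z:\bigl|m-\tfrac{n}{2}\bigr|\le C\sqrt{n}\bigr\},
\]
where \(\mathbb F_{n}\) is the empirical CDF, so \(\mathbb F_{n}(\bar\mu)=1-\nicefrac{Z_{n}}{n}\). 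Because Theorem~\ref{thm:rem_rank_to_normal} already gives \(R_{k,b}=\tfrac{n}{2}+O_{P}(\sqrt{n})\), one evaluates this at \(m=R_{k,b}\) on the event \(\{|R_{k,b}-\tfrac{n}{2}|\le C\sqrt{n}\}\) and lets \(C\to\infty\); using \(\nicefrac{m}{n}-\mathbb F_{n}(\bar\mu)=(m-n+Z_{n})/n\) and \(R_{k,b}-n+Z_{n}=(R_{k,b}-\tfrac{n}{2})+(Z_{n}-\tfrac{n}{2})\) turns this into~\((\star)\). The uniform representation is the one genuinely analytic ingredient and I expect it to be the main obstacle; it follows from the standard modulus-of-continuity bound for the empirical process of \(\{F(X_{i})\}\) near \(\tfrac{1}{2}\), the boundedness of \(F''\) near \(\bar\mu\) in~(\ref{eq:AssumptionCOI_half}) being precisely the hypothesis under which \citet{B66} controls the remainder. (Alternatively, since \(R_{k,b}\) depends on the data only through the ranks---see the paragraph following Theorem~\ref{thm:rem_rank_to_normal}---conditioning on \(R_{k,b}=r\) makes \(X_{(R_{k,b})}\) distributionally equal to the unconditional \(X_{(r)}\), and one can apply Bahadur's representation along deterministic sequences \(r=r_{n}\) with \(b^{-\nicefrac{k}{2}}(r_{n}-\tfrac{n}{2})\) bounded; but carrying the conditioning through to weak convergence of the joint law is fussier than the uniform statement.)

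Granting \((\star)\), the standardized vector on the left of~(\ref{eq:quad-var-lim}) equals \(A\bigl(b^{\nicefrac{k}{2}}(\bar X_{n}-\mu),\,V_{n},\,W_{n}\bigr)^{\mathsf{T}}+o_{P}(1)\), where \(A\) is the \(4\times 3\) matrix with rows \((1,0,0)\), \((0,1,0)\), \((0,1,1)\), \((0,0,e)\). The zero correlations come out of a decoupling: \(\bar X_{n}\) and \(Z_{n}\) (hence \(V_{n}\)) are symmetric functions of \(X_{1},\dots,X_{n}\), i.e.\ functions of the order statistics, whereas \(R_{k,b}\) (hence \(W_{n}\)) is a function of the ranks alone, and for continuous \(F\) the order-statistic vector is independent of the rank vector (\citet{DN03}); thus \(W_{n}\perp\bigl(b^{\nicefrac{k}{2}}(\bar X_{n}-\mu),V_{n}\bigr)\) for every \(n\). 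Consequently: (i) \(\bigl(b^{\nicefrac{k}{2}}(\bar X_{n}-\mu),V_{n}\bigr)=n^{-\nicefrac{1}{2}}\sum_{i=1}^{n}\bigl(X_{i}-\mu,\;e^{-1}(\mathbf{1}_{\left\{ X_{i}>\bar\mu\right\} }-\tfrac{1}{2})\bigr)\implies\mathcal N_{2}(\mathbf{0},\Sigma)\) by the multivariate CLT (using \(\sigma^{2}<\infty\)), with \(\Sigma_{11}=\sigma^{2}\), \(\Sigma_{22}=\tfrac{1}{4e^{2}}\), and \(\Sigma_{12}=e^{-1}\mathbb E\bigl[(X_{1}-\bar\mu)(\mathbf{1}_{\left\{ X_{1}>\bar\mu\right\} }-\tfrac{1}{2})\bigr]=\tfrac{\eta}{2e}\), the last step because \((x-\bar\mu)(\mathbf{1}_{\left\{ x>\bar\mu\right\} }-\tfrac{1}{2})\equiv\tfrac{1}{2}|x-\bar\mu|\) pointwise; (ii) \(W_{n}\implies\mathcal N\bigl(0,\tfrac{\bar\tau_{k}^{2}}{4e^{2}}\bigr)\) by Theorem~\ref{thm:rem_rank_to_normal}; and (iii) by the per-\(n\) independence, \(\bigl(b^{\nicefrac{k}{2}}(\bar X_{n}-\mu),V_{n},W_{n}\bigr)\implies\mathcal N_{3}(\mathbf{0},\Sigma')\) with \(\Sigma'\) block-diagonal, blocks \(\Sigma\) and \(\tfrac{\bar\tau_{k}^{2}}{4e^{2}}\). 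The continuous-mapping theorem (applied to the linear map \(A\)) and Slutsky's theorem then give convergence of the quadruple to \(\mathcal N_{4}(\mathbf{0},A\Sigma'A^{\mathsf{T}})\), and a short computation identifies \(A\Sigma'A^{\mathsf{T}}\) with \(\mathbf D M\mathbf D\) for \(M\) the matrix sandwiched between the two \(\mathbf D\) factors in~(\ref{eq:quad-var-lim}): every entry is immediate except \((A\Sigma'A^{\mathsf{T}})_{33}=\Sigma_{22}+\tfrac{\bar\tau_{k}^{2}}{4e^{2}}=\tfrac{1+\bar\tau_{k}^{2}}{4e^{2}}=\tfrac{\tau_{k}^{2}}{4e^{2}}\) and \((A\Sigma'A^{\mathsf{T}})_{34}=e\cdot\tfrac{\bar\tau_{k}^{2}}{4e^{2}}=\tfrac{\bar\tau_{k}^{2}}{4e}\), matching \(D_{3}^{2}\tau_{k}^{2}\) and \(D_{3}D_{4}\bar\tau_{k}^{2}\) since \(\mathbf D=\mathrm{diag}\bigl(1,\tfrac{1}{2e},\tfrac{1}{2e},\tfrac{1}{2}\bigr)\) and \(\tau_{k}^{2}=1+\bar\tau_{k}^{2}\).
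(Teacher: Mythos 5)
Your proposal is correct in substance and in all of its covariance bookkeeping, but it reaches (\ref{eq:quad-var-lim}) by a genuinely different route than the paper. The paper reduces to one dimension via the Cram\'er--Wold device, conditions on the rank ($\overline{R}_{k,j}=r_{j}$ along sequences supplied by Lemma \ref{lem:existLim}), applies Lemma \ref{lem:bahadur66} at the deterministic quantile levels $p_{j}\rightarrow\nicefrac{1}{2}$ to get a conditional CLT, and then deconditions by an explicit Gaussian-mixture integral; the independence between the rank and the (mean, median) only emerges in the limit as vanishing covariances. You instead prove the additive representation $(\star)$, remedian $=$ median noise $+$ rank noise, by evaluating a Bahadur representation \emph{uniform over an $O(\sqrt{n})$ window of ranks} at the random rank $R_{k,b}$, and you obtain the block structure of the covariance from the exact finite-$n$ independence of the rank vector and the order-statistic vector (so that $R_{k,b}$ is independent of $\bar{X}_{n}$ and $Z_{n}$ for every $n$, not just asymptotically); the quadrivariate limit then falls out of one trivariate CLT, Theorem \ref{thm:rem_rank_to_normal}, and the continuous-mapping theorem. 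Your pointwise identity $\left(x-\bar{\mu}\right)\left(\mathbf{1}_{\left\{ x>\bar{\mu}\right\} }-\nicefrac{1}{2}\right)=\nicefrac{1}{2}\left|x-\bar{\mu}\right|$ is also a cleaner derivation of $\eta$ than the paper's computation at (\ref{eq:2Cov}). What your route buys is transparency---the decomposition $(\star)$ and the source of every zero in the covariance matrix are visible at finite $n$---at the price of one analytic input that is strictly stronger than Lemma \ref{lem:bahadur66} as stated: the uniformity of the Bahadur remainder over $\left|m-\nicefrac{n}{2}\right|\leq C\sqrt{n}$. You correctly identify this as the load-bearing step, but you assert rather than prove it; it is a known consequence of the oscillation bounds for the uniform empirical process (Kiefer's refinement of \citet{B66}, under exactly the smoothness in (\ref{eq:AssumptionCOI_half})) and would need either that citation or a short proof. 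The paper's conditioning device is precisely how it avoids needing this uniform version, at the cost of the sequence-extraction and mixture-integration bookkeeping you call ``fussier.''
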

\begin{proof}
The proof, which uses Lemma \ref{lem:bahadur66}, Theorem \ref{thm:rem_rank_to_normal},
and the Cram\'er-Wold device, appears in Appendix \ref{sec:Multivariate-Convergence}.
\end{proof}
\begin{figure}
\hfill{}\subfloat{\includegraphics[scale=0.5]{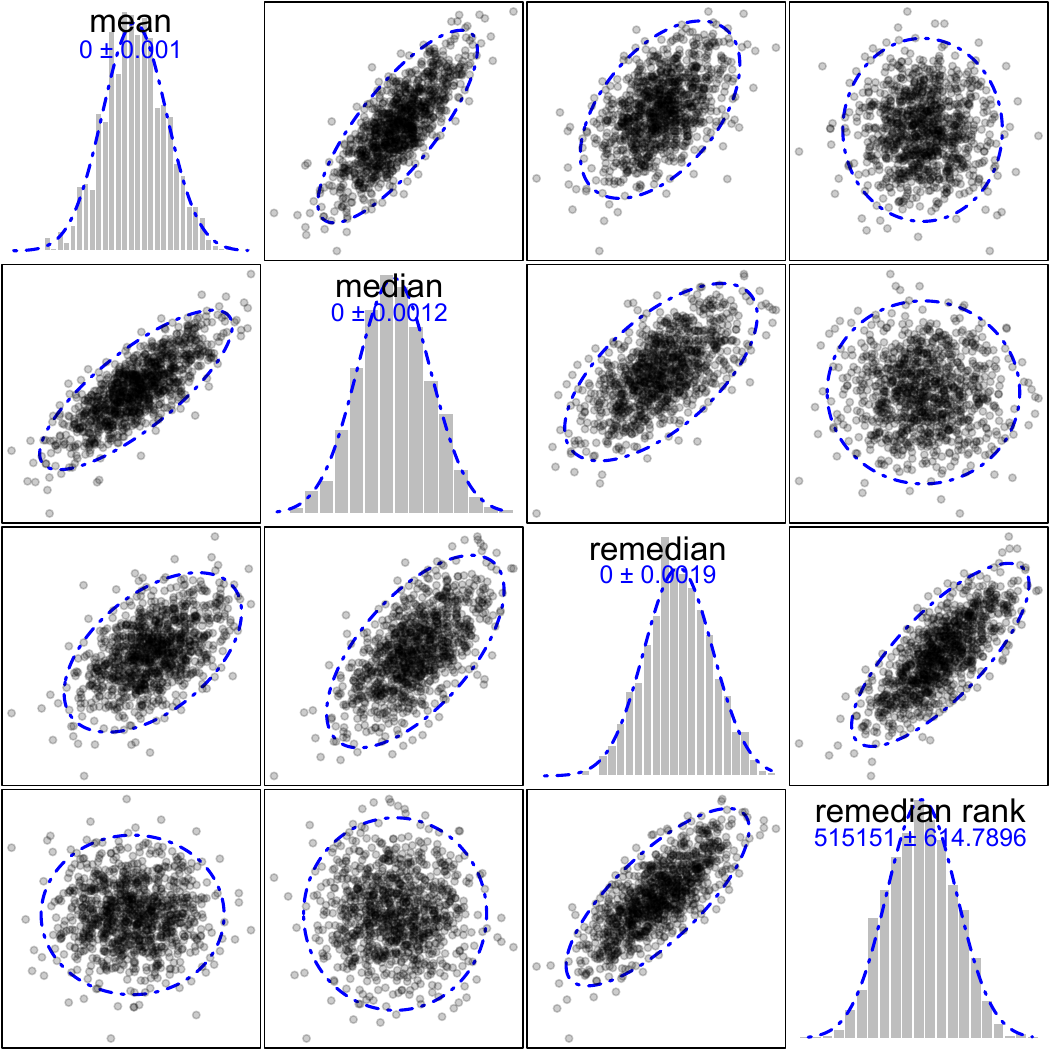}}\hfill{}

\hfill{}\subfloat{\includegraphics[scale=0.5]{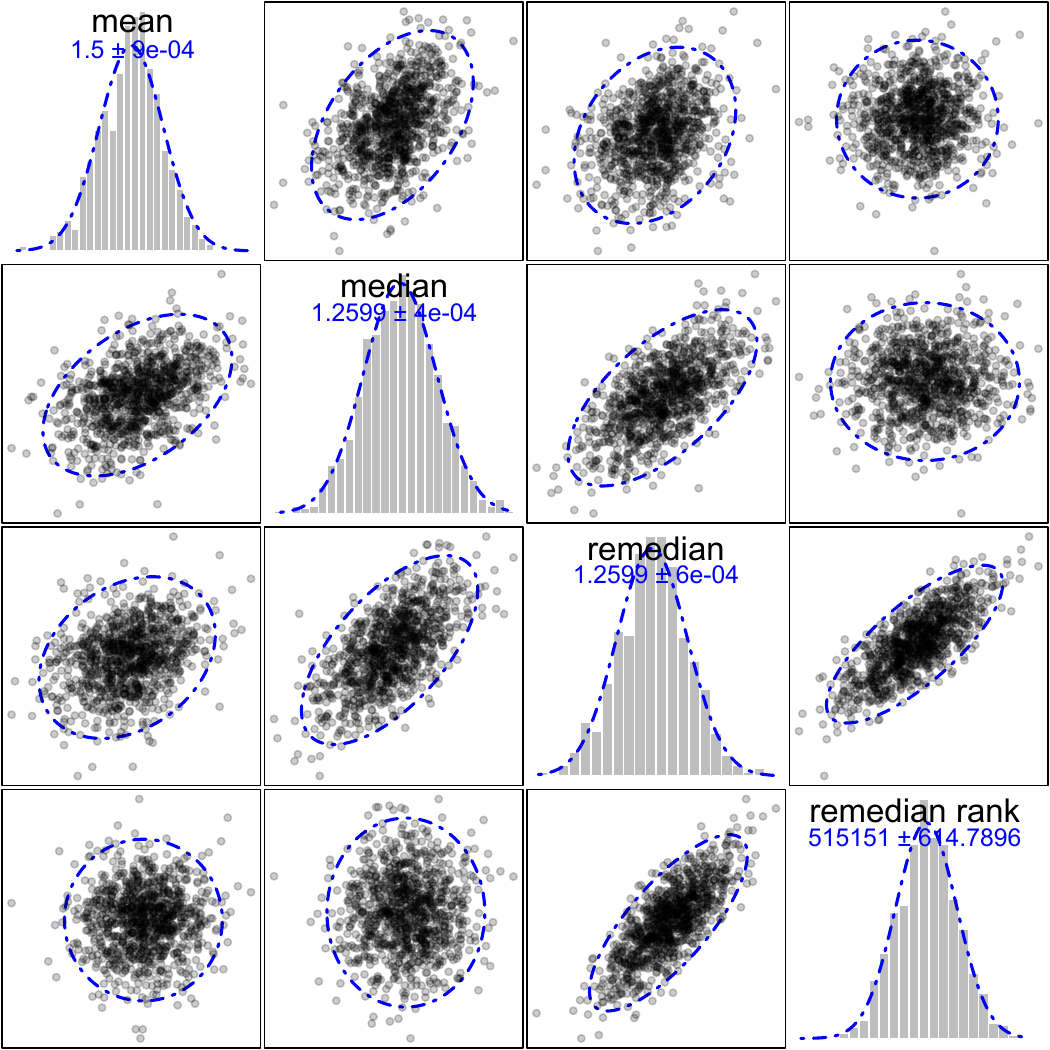}}\hfill{}

\caption{Theorem \ref{thm:rem_med_to_quad_normal} pairs ($k=3$, $b=101$).
Curves: normal densities, 95\% ellipses. Text: $\mathrm{mean}\pm\mathrm{sd}$.
Top: $X_{i}\stackrel{\mathrm{iid}}{\sim}\mathcal{N}\left(0,1\right)$.
Bottom: $X_{i}\stackrel{\mathrm{iid}}{\sim}\mathrm{Pareto}\left(1,3\right)$.}

\label{fig:ThmMultiNormal}
\end{figure}

Figure \ref{fig:ThmMultiNormal} shows simulated mean, median, remedian,
and remedian rank values when $k=3$, $b=101$, and $F\in\left\{ \mathcal{N}\left(0,1\right),\mathrm{Pareto}\left(1,3\right)\right\} $,\footnote{\label{fn:Pareto}For positive $\alpha$ and $\beta$ the $\mathrm{Pareto}\left(\alpha,\beta\right)$
distribution has $F\left(x\right)=(1-(\nicefrac{\alpha}{x})^{\beta})\mathbf{1}_{\left\{ x>\alpha\right\} }$
and
\[
\bar{\mu}=\alpha\sqrt[\beta]{2},\quad\mu=\left\{ \begin{array}{rl}
\infty & \textrm{if }\beta\leq1\\
\frac{\alpha\beta}{\beta-1} & \textrm{if }\beta>1,
\end{array}\right.\textrm{and}\quad\sigma^{2}=\left\{ \begin{array}{rl}
\infty & \textrm{if }\beta\leq2\\
\frac{\alpha^{2}\beta}{\left(\beta-1\right)^{2}\left(\beta-2\right)} & \textrm{if }\beta>2.
\end{array}\right.
\]
\vspace{-20pt}} illustrating Theorems \ref{thm:rem_to_normal}, \ref{thm:rem_rank_to_normal},
and \ref{thm:rem_med_to_quad_normal}. Simulated (pairs of) values
(in gray) approximately follow their derived distributions (in blue).
Replacing $\mathrm{Pareto}\left(1,3\right)$ with, \emph{e.g.}, $\mathrm{Pareto}\left(1,1\right)$
(which has infinite mean and variance) produces plots with outlying
$\bar{X}_{n}$, but normally-distributed medians, remedians, and remedian
ranks. The remedian with $\left(k,b\right)=\left(3,101\right)$ uses
74.3\% of its central inputs (\S\ref{subsec:Breakdown-Point}). Theorem
\ref{thm:rem_med_to_quad_normal} holds for its last three variables
when $\mathbb{E}\left|X_{1}\right|=\infty$ while the streaming computation
of $\bar{X}_{n}$ stores only two numbers: $n$ and $\sum_{i=1}^{n}X_{i}$.

The standardized variables above have the following limiting correlations:

\begin{restatable}{corollary}{correlation}

\label{cor:4by4correlation}If $p_{k}\coloneqq\left(\nicefrac{2}{\pi}\right)^{k-1}$
and $\mathbf{L}_{k}^{\mathsf{T}}\coloneqq\left(\textrm{\textsc{mean}},\textrm{\textsc{med}},\textrm{\textsc{remed}},\textrm{\textsc{remedRank}}\right)$
has the large-$b$ distribution on the RHS of (\ref{eq:quad-var-lim}),
then $\mathbf{L}_{k}$ has correlation matrix
\[
\mathbf{P}_{k}\coloneqq\left(\begin{array}{cccc}
1 & \frac{\eta}{\sigma} & \frac{\eta\sqrt{p_{k}}}{\sigma} & 0\\
\frac{\eta}{\sigma} & 1 & \sqrt{p_{k}} & 0\\
\frac{\eta\sqrt{p_{k}}}{\sigma} & \sqrt{p_{k}} & 1 & \sqrt{1-p_{k}}\\
0 & 0 & \sqrt{1-p_{k}} & 1
\end{array}\right)\stackrel[\infty]{k}{\longrightarrow}\left(\begin{array}{cccc}
1 & \frac{\eta}{\sigma} & 0 & 0\\
\frac{\eta}{\sigma} & 1 & 0 & 0\\
0 & 0 & 1 & 1\\
0 & 0 & 1 & 1
\end{array}\right).
\]
The normality of the $\mathbf{L}_{k}$, $k\geq1$, then implies the
following large-$b$ dependencies:
\[
\begin{array}{rrrr}
 &  & \underline{k\textrm{ finite}} & \underline{k\rightarrow\infty}\\
\textrm{\textsc{mean}} & \textrm{\textsc{med}} & \textrm{dependent} & \textrm{dependent}\\
\textrm{\textsc{mean}} & \textrm{\textsc{remed}} & \textrm{weakly dependent} & \textrm{independent}\\
\textrm{\textsc{mean}} & \textrm{\textsc{remedRank}} & \textrm{independent} & \textrm{independent}\\
\textrm{\textsc{med}} & \textrm{\textsc{remed}} & \textrm{weakly dependent} & \textrm{independent}\\
\textrm{\textsc{med}} & \textrm{\textsc{remedRank}} & \textrm{independent} & \textrm{\textrm{independent}}\\
\textrm{\textsc{remed}} & \textrm{\textsc{remedRank}} & \textrm{strongly dependent} & \textrm{linearly dependent}
\end{array}
\]

\end{restatable}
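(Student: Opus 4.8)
The plan is to obtain $\mathbf{P}_k$ by correlation-normalizing the covariance matrix on the right-hand side of (\ref{eq:quad-var-lim}). Write that covariance as $\mathbf{D}\,\mathbf{M}_k\,\mathbf{D}$, where $\mathbf{M}_k$ is the inner $4\times4$ matrix displayed in Theorem \ref{thm:rem_med_to_quad_normal}. Because $\mathbf{D}$ is diagonal with strictly positive entries, $(\mathbf{D}\mathbf{M}_k\mathbf{D})_{ij}/\sqrt{(\mathbf{D}\mathbf{M}_k\mathbf{D})_{ii}(\mathbf{D}\mathbf{M}_k\mathbf{D})_{jj}}=(\mathbf{M}_k)_{ij}/\sqrt{(\mathbf{M}_k)_{ii}(\mathbf{M}_k)_{jj}}$, so the correlation matrix of the limiting Gaussian is exactly the correlation matrix of $\mathbf{M}_k$ and the factor $\mathbf{D}$ plays no role. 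The diagonal of $\mathbf{M}_k$ is $(\sigma^2,1,\tau_k^2,\bar{\tau}_k^2)$, and dividing each off-diagonal entry by the geometric mean of its two matching diagonal entries is the whole computation.

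The only bookkeeping is to rewrite the resulting ratios in terms of $p_k:=(\nicefrac{2}{\pi})^{k-1}$. Since $\tau_k^2=(\nicefrac{\pi}{2})^{k-1}=p_k^{-1}$ we get $1/\tau_k=\sqrt{p_k}$, and since $\bar{\tau}_k^2=\tau_k^2-1$ we get $\bar{\tau}_k^2/\tau_k^2=1-p_k$, hence $\bar{\tau}_k/\tau_k=\sqrt{1-p_k}$. This turns the six off-diagonal entries $\eta,\eta,0,1,0,\bar{\tau}_k^2$ of $\mathbf{M}_k$ into the correlations $\eta/\sigma$, $\eta\sqrt{p_k}/\sigma$, $0$, $\sqrt{p_k}$, $0$, $\sqrt{1-p_k}$, which is $\mathbf{P}_k$. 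For the limit, $2/\pi<1$ gives $p_k\to0$, hence $\sqrt{p_k}\to0$ and $\sqrt{1-p_k}\to1$, which yields the displayed limiting matrix. (When $k=1$ the remedian-rank coordinate is a.s.\ constant, its variance $\bar{\tau}_1^2$ being zero, so its correlations are read with the usual convention for a degenerate coordinate; the normalization above is carried out for $k\ge2$.)

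For the dependency table I would invoke the standard fact that jointly Gaussian coordinates are independent exactly when uncorrelated. Under the standing assumptions $\eta=\mathbb{E}\left|X_1-\bar{\mu}\right|>0$ and $\sigma<\infty$, so $\eta/\sigma>0$ and \textsc{mean} and \textsc{med} are dependent for every $k$ and in the limit. The \textsc{mean}--\textsc{remed} and \textsc{med}--\textsc{remed} correlations are positive and of order $\sqrt{p_k}$, which decreases to $0$, so those pairs are weakly dependent for finite $k$ but uncorrelated---hence independent---in the limiting law. The \textsc{mean}--\textsc{remedRank} and \textsc{med}--\textsc{remedRank} correlations vanish identically, so those pairs are independent for every $k$. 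The \textsc{remed}--\textsc{remedRank} correlation is $\sqrt{1-p_k}\in(0,1)$ for $k\ge2$, strictly increasing to $1$: strongly dependent for finite $k$, and in the limit a correlation of $1$ forces the limiting bivariate Gaussian onto a line, i.e.\ the two are perfectly linearly dependent.

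There is no substantive obstacle: the corollary is a mechanical normalization of Theorem \ref{thm:rem_med_to_quad_normal} plus one elementary Gaussian fact. The only things to watch are keeping the substitution $p_k=\tau_k^{-2}$ (not $\tau_k^2$) pointed in the right direction, and noting the $k=1$ degeneracy of the remedian rank when reading the fourth row and column of $\mathbf{P}_k$.
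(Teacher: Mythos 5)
Your proposal is correct and is essentially the argument the paper intends: the corollary carries no explicit proof because it is exactly the mechanical normalization of the covariance in (\ref{eq:quad-var-lim}) (with $\mathbf{D}$ cancelling), the substitutions $1/\tau_{k}=\sqrt{p_{k}}$ and $\bar{\tau}_{k}/\tau_{k}=\sqrt{1-p_{k}}$, and the standard fact that jointly Gaussian coordinates are independent iff uncorrelated. Your remark about the degenerate fourth coordinate when $k=1$ is a correct and worthwhile caveat that the paper leaves implicit.
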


\begin{restatable}{example}{corEgs}

If $X_{i}\stackrel{\mathrm{iid}}{\sim}\mathcal{N}\left(\mu,\sigma^{2}\right)$,
Definition \ref{def:halfNormal} gives $\eta=\sigma\sqrt{\nicefrac{2}{\pi}}$,
so that
\[
\mathbf{P}_{3}=\left(\begin{array}{cccc}
1 & \sqrt{p_{2}} & \sqrt{p_{4}} & 0\\
\sqrt{p_{2}} & 1 & \sqrt{p_{3}} & 0\\
\sqrt{p_{4}} & \sqrt{p_{3}} & 1 & \sqrt{1-p_{3}}\\
0 & 0 & \sqrt{1-p_{3}} & 1
\end{array}\right)\approx\left(\begin{array}{cccc}
1 & 0.80 & 0.51 & 0\\
0.80 & 1 & 0.64 & 0\\
0.51 & 0.64 & 1 & 0.77\\
0 & 0 & 0.77 & 1
\end{array}\right).
\]
If $X_{i}\stackrel{\mathrm{iid}}{\sim}\mathrm{Pareto}\left(\alpha,\beta\right)$,
footnote \ref{fn:Pareto} and equation (\ref{eq:2Cov}) give
\[
\frac{\eta}{\sigma}=\left(\sqrt[\beta]{2}-1\right)\sqrt{\beta\left(\beta-2\right)}\longrightarrow\left\{ \begin{array}{rl}
0 & \textrm{as }\beta\rightarrow2^{+}\\
\log2 & \textrm{as }\beta\rightarrow\infty,
\end{array}\right.
\]
so that $\nicefrac{\eta}{\sigma}$ grows monotonically in $\beta$.
Then, for $\beta=2.01$ and $\beta=3$, we have
\[
\mathbf{P}_{3}\approx\left(\begin{array}{cccc}
1 & 0.06 & 0.04 & 0\\
0.06 & 1 & 0.64 & 0\\
0.04 & 0.64 & 1 & 0.77\\
0 & 0 & 0.77 & 1
\end{array}\right)\;\textrm{and }\;\mathbf{P}_{3}\approx\left(\begin{array}{cccc}
1 & 0.45 & 0.29 & 0\\
0.45 & 1 & 0.64 & 0\\
0.29 & 0.64 & 1 & 0.77\\
0 & 0 & 0.77 & 1
\end{array}\right).
\]
$\mathrm{Pareto}\left(\alpha,2.01\right)$'s tendency towards extreme
outliers exceeds that of $\mathrm{Pareto}\left(\alpha,3\right)$;
this tamps down $\mathrm{Cor}\left(\textrm{\textsc{mean}},\textrm{\textsc{median}}\right)$
and $\mathrm{Cor}\left(\textrm{\textsc{mean}},\textrm{\textsc{remedian}}\right)$.

\end{restatable}

\begin{restatable}{myremark}{absvssd}

\label{rem:mallows}As in \citet{M91} we note that 
\[
\sigma\coloneqq\sqrt{\mathrm{Var}\left(X_{1}\right)}\coloneqq\sqrt{\mathbb{E}\left[\left(X_{1}-\mu\right)^{2}\right]}\stackrel{\mathrm{(a)}}{\geq}\mathbb{E}\left|X_{1}-\mu\right|\stackrel{\mathrm{(b)}}{\geq}\mathbb{E}\left|X_{1}-\bar{\mu}\right|\eqqcolon\eta,
\]
where (a) uses Jensen's inequality and (b) uses $\bar{\mu}=\arg\min_{a\in\mathbb{R}}\mathbb{E}\left|X_{1}-a\right|$.

\end{restatable}

While Theorem \ref{thm:rem_med_to_quad_normal} includes three measures
of location---the mean, median, and remedian, Corollary \ref{cor:locDiffs}
studies the differences of pairs of these as $b\rightarrow\infty$.
We ask, \emph{e.g.}, how well does the remedian approximate the sample
median?

\begin{restatable}{corollary}{diff-with-rank}

\label{cor:locDiffs}Let $\iota\coloneqq\nicefrac{1}{f\left(\bar{\mu}\right)}$.
Under (\ref{eq:Xi}) and (\ref{eq:AssumptionCOI_half}) we have
\begin{align}
b^{\nicefrac{k}{2}}\left[X_{\left(R_{k,b}\right)}-X_{\left(h_{k,b}\right)}\right] & \stackrel[\infty]{b}{\Longrightarrow}\mathcal{N}\left(0,\:\nicefrac{\iota^{2}\tau_{k}^{2}}{4}-\nicefrac{\iota^{2}}{4}\right)\label{eq:remMinusMed}\\
b^{\nicefrac{k}{2}}\left[\left(X_{\left(h_{k,b}\right)}-\bar{X}_{b^{k}}\right)-\left(\bar{\mu}-\mu\right)\right] & \stackrel[\infty]{b}{\Longrightarrow}\mathcal{N}\left(0,\:\sigma^{2}-\iota\eta+\nicefrac{\iota^{2}}{4}\right)\label{eq:medMinusMean}\\
b^{\nicefrac{k}{2}}\left[\left(X_{\left(R_{k,b}\right)}-\bar{X}_{b^{k}}\right)-\left(\bar{\mu}-\mu\right)\right] & \stackrel[\infty]{b}{\Longrightarrow}\mathcal{N}\left(0,\:\sigma^{2}-\iota\eta+\nicefrac{\iota^{2}\tau_{k}^{2}}{4}\right).\label{eq:remMinusMean}
\end{align}

\end{restatable}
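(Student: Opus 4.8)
The plan is to read all three limits off the quadrivariate convergence in Theorem \ref{thm:rem_med_to_quad_normal} by pushing the limiting Gaussian vector through fixed linear maps, using that weak convergence is preserved under continuous maps. First I would write the limiting covariance matrix in (\ref{eq:quad-var-lim}) explicitly. Since $\nicefrac{1}{2f\left(\bar{\mu}\right)}=\nicefrac{\iota}{2}$, we have $\mathbf{D}=\mathrm{diag}\left(1,\nicefrac{\iota}{2},\nicefrac{\iota}{2},\nicefrac{1}{2}\right)$, and the $\left(i,j\right)$ entry of the conjugated matrix is $D_{ii}$ times the corresponding entry of the inner matrix times $D_{jj}$. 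The leading $3\times3$ block (mean, median, remedian) therefore has diagonal $\left(\sigma^{2},\nicefrac{\iota^{2}}{4},\nicefrac{\iota^{2}\tau_{k}^{2}}{4}\right)$, with $\mathrm{Cov}\left(\textsc{mean},\textsc{med}\right)=\mathrm{Cov}\left(\textsc{mean},\textsc{remed}\right)=\nicefrac{\iota\eta}{2}$ and $\mathrm{Cov}\left(\textsc{med},\textsc{remed}\right)=\nicefrac{\iota^{2}}{4}$. The fourth (remedian-rank) coordinate never enters, so only the moment hypothesis $\sigma^{2}<\infty$ already present in Theorem \ref{thm:rem_med_to_quad_normal} is used.

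Next, for each of the three statements I would absorb the deterministic shifts into the already-centered coordinates appearing in Theorem \ref{thm:rem_med_to_quad_normal}. Writing $X_{\left(R_{k,b}\right)}-X_{\left(h_{k,b}\right)}=\left(X_{\left(R_{k,b}\right)}-\bar{\mu}\right)-\left(X_{\left(h_{k,b}\right)}-\bar{\mu}\right)$, and $\left(X_{\left(h_{k,b}\right)}-\bar{X}_{b^{k}}\right)-\left(\bar{\mu}-\mu\right)=\left(X_{\left(h_{k,b}\right)}-\bar{\mu}\right)-\left(\bar{X}_{b^{k}}-\mu\right)$, and similarly for the third, each bracketed quantity is $b^{\nicefrac{k}{2}}$ times a fixed linear functional of the random vector on the LHS of (\ref{eq:quad-var-lim}). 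By the continuous mapping theorem, $b^{\nicefrac{k}{2}}$ times that functional converges in distribution to the same functional of the limiting Gaussian vector, which is univariate normal with mean zero.

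Finally I would evaluate the three variances via $\mathrm{Var}\left(U-V\right)=\mathrm{Var}\left(U\right)+\mathrm{Var}\left(V\right)-2\,\mathrm{Cov}\left(U,V\right)$ using the block recorded above. For (\ref{eq:remMinusMed}), $U-V=\textsc{remed}-\textsc{med}$ gives $\nicefrac{\iota^{2}\tau_{k}^{2}}{4}+\nicefrac{\iota^{2}}{4}-2\cdot\nicefrac{\iota^{2}}{4}=\nicefrac{\iota^{2}\tau_{k}^{2}}{4}-\nicefrac{\iota^{2}}{4}$; for (\ref{eq:medMinusMean}), $\textsc{med}-\textsc{mean}$ gives $\nicefrac{\iota^{2}}{4}+\sigma^{2}-2\cdot\nicefrac{\iota\eta}{2}=\sigma^{2}-\iota\eta+\nicefrac{\iota^{2}}{4}$; for (\ref{eq:remMinusMean}), $\textsc{remed}-\textsc{mean}$ gives $\nicefrac{\iota^{2}\tau_{k}^{2}}{4}+\sigma^{2}-\iota\eta$. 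These match the three right-hand sides, completing the argument.

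There is essentially no obstacle here beyond bookkeeping: the only place to be careful is carrying the factor $\nicefrac{\iota}{2}$ (rather than $\iota$) from $\mathbf{D}$ when reading off the median and remedian variances and their covariances, and confirming that the affine recentering by $\bar{\mu}$ and $\mu$ is exactly what reduces each difference to a linear functional of the vector in Theorem \ref{thm:rem_med_to_quad_normal}.
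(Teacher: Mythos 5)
Your proposal is correct and is exactly the route the paper intends: the corollary is stated without a printed proof precisely because it follows from Theorem \ref{thm:rem_med_to_quad_normal} by applying fixed linear functionals (continuous mapping) to the quadrivariate limit, and your covariance bookkeeping with $\mathbf{D}=\mathrm{diag}\left(1,\nicefrac{\iota}{2},\nicefrac{\iota}{2},\nicefrac{1}{2}\right)$ reproduces all three variances. The only refinement worth noting is that (\ref{eq:remMinusMed}) does not involve the mean coordinate and so holds without the $\sigma^{2}<\infty$ hypothesis, consistent with the paper's remark that the limit for the last three coordinates of (\ref{eq:quad-var-lim}) survives even when $\mathbb{E}\left|X_{1}\right|=\infty$.
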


When $k=1$ (\emph{i.e.}, the remedian is the median), the variance
in (\ref{eq:remMinusMed}) equals zero and the variances in (\ref{eq:medMinusMean})
and (\ref{eq:remMinusMean}) equal each other, as desired (\emph{cf.}\ Theorems
\ref{thm:rem_to_normal} and \ref{thm:rem_rank_to_normal}). By (\ref{eq:remMinusMed}),
the remedian's error in approximating the sample median approaches
zero in probability, as $b\rightarrow\infty$, at rate $C_{k}b^{\nicefrac{-k}{2}}$,
where $C_{k}\coloneqq\nicefrac{\bar{\tau}_{k}}{2f\left(\bar{\mu}\right)}$.
Panels (\ref{eq:medMinusMean}) and (\ref{eq:remMinusMean}) describe
systematic ($\bar{\mu}-\mu$) and stochastic (of order $b^{\nicefrac{-k}{2}}$)
errors in using the median or the remedian to approximate the sample
mean, the former vanishing when $\bar{\mu}=\mu$. Finally, $\tau_{k}^{2}\geq1$
and Remark \ref{rem:mallows} imply that:
\[
\left(\sigma^{2}-\iota\eta+\nicefrac{\iota^{2}\tau_{k}^{2}}{4}\right)\geq\left(\sigma^{2}-\iota\eta+\nicefrac{\iota^{2}}{4}\right)\geq\left(\eta^{2}-\iota\eta+\nicefrac{\iota^{2}}{4}\right)=\left(\eta-\nicefrac{\iota}{2}\right)^{2}\geq0.
\]
That is to say, the variances in (\ref{eq:medMinusMean}) and (\ref{eq:remMinusMean})
make sense (\emph{i.e.}, are non-negative); also, the variance in
(\ref{eq:remMinusMean}) exceeds that in (\ref{eq:medMinusMean})
when $k\geq2$, as expected.

\subsection{Asymptotic Relative Efficiency \label{subsec:Asymptotic-Relative-Efficiency}}

Theorem \ref{thm:rem_med_to_quad_normal} in hand, we now ask, which
of its three, asymptotically-unbiased measures of location most efficiently
uses its $n\coloneqq b^{k}$ data points to approximate the estimand
of interest, the population median $\bar{\mu}$? Statements comparing
the efficiencies of estimators depend on the distribution $F$. The
fact below assumes normality, which implies that $\bar{\mu}=\mu$.

\begin{restatable}{myfact}{meanmedian}

\label{fact:meanMedian}For $0<\sigma_{0}<\infty$ known and $X_{1},X_{2},\ldots,X_{n}\stackrel{\mathrm{iid}}{\sim}\mathcal{N}\left(\bar{\mu},\sigma_{0}^{2}\right)$,
\begin{enumerate}
\item \label{enu:Fisher}The Fisher information in $\bar{X}_{n}\sim\mathcal{N}\left(\bar{\mu},\nicefrac{\sigma_{0}^{2}}{n}\right)$
about $\bar{\mu}$ is $\mathscr{I}\left(\bar{\mu}\right)\coloneqq\nicefrac{n}{\sigma_{0}^{2}}$;
\item \label{enu:ave}The sample mean $\bar{X}_{n}$ efficiently estimates
$\bar{\mu}$; and
\item \label{enu:med}When $n\rightarrow\infty$, the sample median, $\mathrm{median}\left(\mathbf{X}\right)$,
is $100\left(\nicefrac{2}{\pi}\right)\%\approx63.7\%$ as efficient
as $\bar{X}_{n}$ in estimating $\bar{\mu}$.
\end{enumerate}
\end{restatable}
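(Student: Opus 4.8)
The plan is to dispatch the three claims in order, each being classical. For claim \ref{enu:Fisher} I would use additivity of Fisher information: for a single observation $X\sim\mathcal N(\bar\mu,\sigma_0^2)$ one has $\partial_{\bar\mu}\log f(x;\bar\mu)=(x-\bar\mu)/\sigma_0^2$, whose variance is $\sigma_0^{-2}$, so the full i.i.d.\ sample carries information $n/\sigma_0^2$; since $\bar X_n$ is sufficient for $\bar\mu$, it carries exactly this same information, giving $\mathscr I(\bar\mu)=n/\sigma_0^2$. Equivalently, one differentiates the log-density of $\bar X_n\sim\mathcal N(\bar\mu,\sigma_0^2/n)$ directly and observes that $-\partial^2_{\bar\mu}\log f$ equals the non-random constant $n/\sigma_0^2$. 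For claim \ref{enu:ave}, normality of the data makes $\bar X_n\sim\mathcal N(\bar\mu,\sigma_0^2/n)$ \emph{exactly}, so $\mathrm{Var}(\bar X_n)=\sigma_0^2/n=1/\mathscr I(\bar\mu)$: the Cram\'er--Rao lower bound is attained, hence $\bar X_n$ is efficient (indeed UMVU).

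For claim \ref{enu:med} I would invoke Corollary \ref{cor:bahadurcordist} with $\tilde p=\nicefrac{1}{2}$ (so $\tilde\mu=\bar\mu$ and $\tilde p\tilde q=\nicefrac{1}{4}$), which yields $\sqrt n\,f(\bar\mu)\bigl(\mathrm{median}(\mathbf X)-\bar\mu\bigr)\implies\mathcal N(0,\nicefrac{1}{4})$, i.e.\ $\sqrt n\bigl(\mathrm{median}(\mathbf X)-\bar\mu\bigr)\implies\mathcal N\bigl(0,\,1/(4f(\bar\mu)^2)\bigr)$. For $F=\mathcal N(\bar\mu,\sigma_0^2)$ we have $f(\bar\mu)=1/(\sigma_0\sqrt{2\pi})$, hence $f(\bar\mu)^2=1/(2\pi\sigma_0^2)$ and the asymptotic variance of $\sqrt n(\mathrm{median}-\bar\mu)$ is $\pi\sigma_0^2/2$, to be compared with $\sigma_0^2$ for $\sqrt n(\bar X_n-\bar\mu)$. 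The asymptotic relative efficiency of the median with respect to the mean is the ratio of these limiting variances, $\sigma_0^2/(\pi\sigma_0^2/2)=\nicefrac{2}{\pi}\approx0.637$, which is the asserted $100(\nicefrac{2}{\pi})\%$.

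There is no substantive obstacle here; the result is textbook, and the proof is essentially a bookkeeping exercise built on Corollary \ref{cor:bahadurcordist}. The only points requiring care are conventional: ``efficient'' in part \ref{enu:ave} means attainment of the Cram\'er--Rao bound at finite $n$, whereas in part \ref{enu:med} it means a ratio of limiting variances as $n\to\infty$; and normality forces $\bar\mu=\mu$, so the mean and median estimate the same target, which is what makes the efficiency comparison meaningful. I would state this alignment of conventions explicitly at the start of the proof.
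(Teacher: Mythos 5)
Your proposal is correct and follows the same lines the paper relies on: the paper's proof simply cites Examples 2.80, 5.14, and 7.44 of Schervish (1995) (noting that part 2 follows from part 1 plus the Cram\'er--Rao bound), and your computations are exactly the standard arguments behind those citations, with part 3 obtained from Corollary \ref{cor:bahadurcordist} just as the paper does in its ARE discussion immediately following the fact. No gaps.
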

\begin{proof}
Parts \ref{enu:Fisher}, \ref{enu:ave}, and \ref{enu:med} are Examples
2.80, 5.14, and 7.44 on pages 111, 301, and 413 of \citet{S95}. (Example
7.44 has a small typo.) Part \ref{enu:ave} uses part \ref{enu:Fisher}
and the Cram\'er-Rao lower bound (Theorem 5.13 of \citet{S95}).
\end{proof}
While the tails of the normal distribution quickly approach zero,
the sample mean efficiently estimates its center, $\bar{\mu}$. By
the Cram\'er-Rao lower bound, $\nicefrac{1}{\mathscr{I}\left(\bar{\mu}\right)}$
provides a lower bound for the variance of \emph{any} unbiased estimator
of $\bar{\mu}$ (part \ref{enu:ave}). The sample median squanders
$100\left(1-\nicefrac{2}{\pi}\right)\%\approx36.3\%$ of its data,
as $n\rightarrow\infty$; the \emph{asymptotic relative efficiency}
(ARE) of $M_{n}\coloneqq\mathrm{median}\left(\mathbf{X}\right)$ relative
to $\bar{X}_{n}$ is
\[
\textrm{\textsc{are}}\left(M_{n},\bar{X}_{n}\right)\coloneqq\frac{\lim_{n\rightarrow\infty}\mathrm{Var}\left(\sqrt{n}\left(\bar{X}_{n}-\bar{\mu}\right)\right)}{\lim_{n\rightarrow\infty}\mathrm{Var}\left(\sqrt{n}\left(M_{n}-\bar{\mu}\right)\right)}=\frac{\sigma_{0}^{2}}{\left.\pi\sigma_{0}^{2}\right/2}=\nicefrac{2}{\pi}
\]
(see Corollary \ref{cor:bahadurcordist}). Equivalently, $\mathrm{Var}\left(\bar{X}_{n}\right)\sim\mathrm{Var}\left(M_{\nicefrac{\pi n}{2}}\right)$,
as $n\rightarrow\infty$ (part \ref{enu:med}). The ARE measures the
efficiency of one estimator relative to another, under a certain $F$,
as $n\rightarrow\infty$. Efficient estimators quickly zero in on
their estimands.

While Fact \ref{fact:meanMedian} gives $\textrm{\textsc{are}}\left(M_{n},\bar{X}_{n}\right)$
under normality, Propositions \ref{prop:medRemed} and \ref{prop:meanRemed}
give $\textrm{\textsc{are}}\left(X_{\left(R_{k,b}\right)},M_{n}\right)$
and $\textrm{\textsc{are}}\left(X_{\left(R_{k,b}\right)},\bar{X}_{n}\right)$
under any $F$ satisfying (\ref{eq:AssumptionCOI_half}).

\begin{restatable}{proposition}{medremed}

\label{prop:medRemed}Under (\ref{eq:Xi}) and (\ref{eq:AssumptionCOI_half})
the remedian is $100\left(\nicefrac{2}{\pi}\right)^{k-1}\%\leq100\%$
as efficient as the sample median at estimating $\bar{\mu}$ as $b\rightarrow\infty$.

\end{restatable}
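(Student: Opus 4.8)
The plan is to read off the two relevant asymptotic variances from results already in hand and form their ratio. Write $n\coloneqq b^{k}$. Rearranging Theorem~\ref{thm:rem_to_normal} gives $\sqrt{n}\,(X_{(R_{k,b})}-\bar{\mu})\implies\mathcal{N}\left(0,\,(\nicefrac{\pi}{2})^{k-1}/(4f(\bar{\mu})^{2})\right)$ as $b\to\infty$. On the other side, because $b\in\mathcal{B}_{3}$ forces $n=b^{k}$ to be odd, the sample median $M_{n}$ coincides with the single order statistic $X_{(\lfloor 1+(n-1)/2\rfloor)}=X_{((n+1)/2)}$, which is precisely the $Y_{n}$ of Corollary~\ref{cor:bahadurcordist} with $\tilde{p}=\nicefrac{1}{2}$; that corollary therefore yields $\sqrt{n}\,(M_{n}-\bar{\mu})\implies\mathcal{N}\left(0,\,1/(4f(\bar{\mu})^{2})\right)$ as $n\to\infty$.

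Next I would substitute these into the ratio-of-asymptotic-variances definition of the ARE employed above for $\textsc{are}(M_{n},\bar{X}_{n})$, taking the numerator to be the limiting variance of $\sqrt{n}\,(M_{n}-\bar{\mu})$ and the denominator that of $\sqrt{n}\,(X_{(R_{k,b})}-\bar{\mu})$. The common factor $1/(4f(\bar{\mu})^{2})$ cancels, leaving $\textsc{are}(X_{(R_{k,b})},M_{n})=(\nicefrac{2}{\pi})^{k-1}$. Finally, since $0<\nicefrac{2}{\pi}<1$ and $k\geq1$, this quantity lies in $(0,1]$, with equality precisely when $k=1$---exactly the case in which the remedian \emph{is} the median, so the claim and its stated bound both follow.

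There is essentially no hard step here: the substantive work was carried out in Theorem~\ref{thm:rem_to_normal} and Corollary~\ref{cor:bahadurcordist}. Two small points deserve a sentence of care. First, both estimators are computed from the \emph{same} $b^{k}$ observations, so both standardizations carry the factor $\sqrt{b^{k}}=b^{k/2}$ and the efficiency comparison is on equal footing. Second, the phrase ``$\lim\mathrm{Var}$'' in the ARE definition is the customary shorthand---also used above for $\textsc{are}(M_{n},\bar{X}_{n})$---for the variance of the Gaussian limit rather than a literal interchange of limit and variance, so the two convergence-in-distribution statements supply exactly what is needed.
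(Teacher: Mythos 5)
Your proposal is correct and matches the paper's proof, which is a one-line citation of exactly the same two ingredients (Corollary \ref{cor:bahadurcordist} for the median, Theorem \ref{thm:rem_to_normal} for the remedian) combined via the ratio-of-asymptotic-variances definition of ARE (Theorem 6.7 of Lehmann and Casella). You have simply written out the cancellation of the common factor $1/(4f(\bar{\mu})^{2})$ that the paper leaves implicit.
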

\begin{proof}
This follows from Corollary \ref{cor:bahadurcordist} and Theorem
\ref{thm:rem_to_normal} above and Theorem 6.7 on page 473 of \citet{LC98}.
\end{proof}
As expected, the remedian's efficiency falls below that of the sample
median when $k\geq2$ (Theorems \ref{thm:rem_to_normal} and \ref{thm:rem_rank_to_normal}).
The following proposition assumes $\bar{\mu}=\mu$.

\begin{restatable}{proposition}{meanremed}

\label{prop:meanRemed}Under (\ref{eq:Xi}) and (\ref{eq:AssumptionCOI_half})
the remedian is $400\left(\nicefrac{2}{\pi}\right)^{k-1}f\left(\bar{\mu}\right)^{2}\sigma^{2}\%$
as efficient as the sample mean in estimating $\bar{\mu}=\mu$ as
$b\rightarrow\infty$.

\end{restatable}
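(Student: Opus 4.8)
The plan is to read the result off directly from Theorem \ref{thm:rem_to_normal} and the classical central limit theorem, following the recipe in the definition of asymptotic relative efficiency given above. First I would note that, since $0<\sigma^{2}<\infty$, the CLT applied along $n=b^{k}$ gives $b^{\nicefrac{k}{2}}\left(\bar{X}_{b^{k}}-\mu\right)\stackrel[\infty]{b}{\Longrightarrow}\mathcal{N}\left(0,\sigma^{2}\right)$, so the sample mean has asymptotic variance $\sigma^{2}$ at rate $b^{\nicefrac{k}{2}}$. Next, dividing the conclusion of Theorem \ref{thm:rem_to_normal} through by $2f\left(\bar{\mu}\right)$ gives
\[
b^{\nicefrac{k}{2}}\left(X_{\left(R_{k,b}\right)}-\bar{\mu}\right)\stackrel[\infty]{b}{\Longrightarrow}\mathcal{N}\left(0,\;\frac{\left(\nicefrac{\pi}{2}\right)^{k-1}}{4f\left(\bar{\mu}\right)^{2}}\right),
\]
so the remedian has asymptotic variance $\left(\nicefrac{\pi}{2}\right)^{k-1}/\left(4f\left(\bar{\mu}\right)^{2}\right)$ at the same rate $b^{\nicefrac{k}{2}}$.

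Because the proposition assumes $\bar{\mu}=\mu$, the two estimators are asymptotically unbiased for a common target and standardized at the same rate, so the ARE is simply the ratio of the two asymptotic variances just obtained --- invoking the same classical fact used in the proof of Proposition \ref{prop:medRemed} (Theorem 6.7 on page 473 of \citet{LC98}). Carrying out the division would give
\[
\textsc{are}\left(X_{\left(R_{k,b}\right)},\bar{X}_{b^{k}}\right)=\frac{\sigma^{2}}{\left(\nicefrac{\pi}{2}\right)^{k-1}/\left(4f\left(\bar{\mu}\right)^{2}\right)}=4\left(\nicefrac{2}{\pi}\right)^{k-1}f\left(\bar{\mu}\right)^{2}\sigma^{2},
\]
i.e.\ $400\left(\nicefrac{2}{\pi}\right)^{k-1}f\left(\bar{\mu}\right)^{2}\sigma^{2}\%$, which is the stated efficiency.

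I do not expect any substantive obstacle: the content lives entirely in Theorem \ref{thm:rem_to_normal}, and the only point I would flag explicitly is conceptual --- the ARE here is defined through the variances of the limiting normal laws, not through $\lim_{b\rightarrow\infty}\mathrm{Var}$ of the (possibly non-integrable) statistics, so no uniform-integrability step is required; this is consistent with the remark after Theorem \ref{thm:rem_med_to_quad_normal} that the remedian can be asymptotically normal even when $\mathbb{E}\left|X_{1}\right|=\infty$, although the comparison with $\bar{X}_{b^{k}}$ is of course only meaningful when $\sigma^{2}<\infty$. As a sanity check I would note that $k=1$ returns $400 f\left(\bar{\mu}\right)^{2}\sigma^{2}\%$, which under normality ($f\left(\bar{\mu}\right)^{2}\sigma^{2}=\nicefrac{1}{2\pi}$) collapses to $100\left(\nicefrac{2}{\pi}\right)\%$, recovering Fact \ref{fact:meanMedian}.
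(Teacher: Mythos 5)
Your proposal is correct and is exactly the paper's argument: the paper's proof is a one-line citation of Theorem \ref{thm:rem_to_normal}, the central limit theorem, and Theorem 6.7 of \citet{LC98}, and you have simply written out the variance ratio those three ingredients yield. The $k=1$ sanity check against Fact \ref{fact:meanMedian}.\ref{enu:med} is a nice touch but not needed.
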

\begin{proof}
This follows from Theorem \ref{thm:rem_to_normal} above along with
the central limit theorem and Theorem 6.7 on page 473 of \citet{LC98}.
\end{proof}
While Proposition \ref{prop:meanRemed} generalizes part \ref{enu:med}
of Fact \ref{fact:meanMedian},\footnote{It includes both the median and the remedian, and it holds for any
$F$ satisfying (\ref{eq:AssumptionCOI_half}).} it has more moving parts: $X_{\left(R_{k,b}\right)}$ loses efficiency
relative to $\bar{X}_{b^{k}}$ when $k$ increases, $f\left(\bar{\mu}\right)$
decreases, or $\sigma^{2}$ decreases. The following corollary studies
three symmetric distributions.

\begin{restatable}{corollary}{tdist}

\label{cor:propEG}Let $T_{\nu}$ have a $t$-distribution with $\nu>2$
degrees of freedom and fix $0<\alpha,\varsigma<\infty$. Proposition
\ref{prop:meanRemed} implies that:
\begin{enumerate}
\item \label{enu:beta}If $X_{1},X_{2},\ldots,X_{b^{k}}\stackrel{\mathrm{iid}}{\sim}\mathrm{Beta}\left(\alpha,\alpha\right)\left(\stackrel[\mathrm{tiny}]{\alpha}{\approx}\mathrm{Bernoulli}\left(\frac{1}{2}\right),\stackrel[\mathrm{big}]{\alpha}{\approx}\mathcal{N}\left(\frac{1}{2},\frac{1}{4\left(2\alpha+1\right)}\right)\right)$,
the remedian is
\[
\frac{100\left(\nicefrac{2}{\pi}\right)^{k-1}}{16^{\alpha-1}\left(2\alpha+1\right)B\left(\alpha,\alpha\right)^{2}}\%\longrightarrow\left\{ \begin{array}{rl}
0\% & \textrm{ as }\alpha\rightarrow0^{+}\\
100\left(\nicefrac{2}{\pi}\right)^{k}\% & \textrm{ as }\alpha\rightarrow\infty
\end{array}\right.
\]
as efficient as the sample mean in estimating $\bar{\mu}=\mu=\nicefrac{1}{2}$
as $b\rightarrow\infty$.
\item \label{enu:tdist}If $X_{1},X_{2},\ldots,X_{b^{k}}\stackrel{\mathrm{iid}}{\sim}\mathscr{L}\left(\varsigma T_{\nu}+\bar{\mu}\right)\stackrel[\infty]{\nu}{\longrightarrow}\mathcal{N}\left(\bar{\mu},\varsigma^{2}\right)$,
the remedian is 
\[
\frac{400\left(\nicefrac{2}{\pi}\right)^{k-1}}{\left(\nu-2\right)B\left(\nicefrac{\nu}{2},\nicefrac{1}{2}\right)^{2}}\%\longrightarrow\left\{ \begin{array}{rl}
\infty\% & \textrm{ as }\nu\rightarrow2^{+}\\
100\left(\nicefrac{2}{\pi}\right)^{k}\% & \textrm{ as }\nu\rightarrow\infty
\end{array}\right.
\]
as efficient as the sample mean in estimating $\bar{\mu}=\mu$ as
$b\rightarrow\infty$.
\item \label{enu:normal}If $X_{1},X_{2},\ldots,X_{b^{k}}\stackrel{\mathrm{iid}}{\sim}\mathcal{N}\left(\bar{\mu},\sigma^{2}\right)$,
the remedian is $100\left(\nicefrac{2}{\pi}\right)^{k}\%<100\%$ as
efficient as the sample mean in estimating $\bar{\mu}=\mu$ as $b\rightarrow\infty$.
\end{enumerate}
\end{restatable}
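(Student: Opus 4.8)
The plan is to apply Proposition~\ref{prop:meanRemed} once to each of the three families: since that proposition gives the large-$b$ efficiency of the remedian relative to the sample mean as $400\left(\nicefrac{2}{\pi}\right)^{k-1}f\left(\bar{\mu}\right)^{2}\sigma^{2}\%$, everything reduces to (i)~computing the density at the population median, $f\left(\bar{\mu}\right)$, together with $\sigma^{2}\coloneqq\mathrm{Var}\left(X_{1}\right)$, (ii)~simplifying the resulting product with Beta--Gamma identities, and (iii)~reading off the two limits. In each case $F$ is symmetric about $\bar{\mu}$, so $\mu=\bar{\mu}$ and Proposition~\ref{prop:meanRemed} applies in the stated form.

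For part~\ref{enu:beta}: the $\mathrm{Beta}\left(\alpha,\alpha\right)$ density is symmetric about $\nicefrac{1}{2}$, with $f\left(\nicefrac{1}{2}\right)=\bigl[4^{\alpha-1}B\left(\alpha,\alpha\right)\bigr]^{-1}$ and $\sigma^{2}=\bigl[4\left(2\alpha+1\right)\bigr]^{-1}$; substituting into $400\left(\nicefrac{2}{\pi}\right)^{k-1}f\left(\bar{\mu}\right)^{2}\sigma^{2}$ gives the stated percentage. The limit $\alpha\rightarrow0^{+}$ follows from $B\left(\alpha,\alpha\right)\rightarrow\infty$ (indeed $B\left(\alpha,\alpha\right)\sim\nicefrac{2}{\alpha}$), and the limit $\alpha\rightarrow\infty$ from Stirling's formula, which yields $B\left(\alpha,\alpha\right)\sim 2\sqrt{\pi}\,4^{-\alpha}\alpha^{-1/2}$, hence $16^{\alpha-1}\left(2\alpha+1\right)B\left(\alpha,\alpha\right)^{2}\rightarrow\nicefrac{\pi}{2}$ and the efficiency $\rightarrow100\left(\nicefrac{2}{\pi}\right)^{k}\%$. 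The parenthetical heuristics can be made precise by noting that the $\mathrm{Beta}\left(\alpha,\alpha\right)$ density tends pointwise to $0$ on $\left(0,1\right)$ as $\alpha\rightarrow0^{+}$ while its mass escapes to $\left\{0,1\right\}$, and that $\sqrt{2\left(2\alpha+1\right)}\left(X_{1}-\nicefrac{1}{2}\right)\implies\mathcal{N}\left(0,1\right)$ as $\alpha\rightarrow\infty$ (a Laplace/Stirling expansion, or simply first- and second-moment matching).

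For part~\ref{enu:tdist}: $\varsigma T_{\nu}+\bar{\mu}$ has density $\Gamma\!\left(\tfrac{\nu+1}{2}\right)/\bigl[\varsigma\sqrt{\nu\pi}\,\Gamma\!\left(\tfrac{\nu}{2}\right)\bigr]$ at $\bar{\mu}$ (the $t_{\nu}$ density at $0$, rescaled by $\varsigma^{-1}$) and variance $\varsigma^{2}\nu/\left(\nu-2\right)$; substituting and using $B\!\left(\nicefrac{\nu}{2},\nicefrac{1}{2}\right)=\sqrt{\pi}\,\Gamma\!\left(\tfrac{\nu}{2}\right)/\Gamma\!\left(\tfrac{\nu+1}{2}\right)$ to absorb the Gamma factors yields $400\left(\nicefrac{2}{\pi}\right)^{k-1}/\bigl[\left(\nu-2\right)B\!\left(\nicefrac{\nu}{2},\nicefrac{1}{2}\right)^{2}\bigr]$. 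The $\nu\rightarrow2^{+}$ limit is immediate since $\nu-2\rightarrow0$ while $B\!\left(1,\nicefrac{1}{2}\right)=2$; for $\nu\rightarrow\infty$ the ratio asymptotic $\Gamma\left(x\right)/\Gamma\!\left(x+\nicefrac{1}{2}\right)\sim x^{-1/2}$ gives $B\!\left(\nicefrac{\nu}{2},\nicefrac{1}{2}\right)\sim\sqrt{2\pi/\nu}$, so $\left(\nu-2\right)B\!\left(\nicefrac{\nu}{2},\nicefrac{1}{2}\right)^{2}\rightarrow2\pi$ and the efficiency $\rightarrow100\left(\nicefrac{2}{\pi}\right)^{k}\%$. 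Part~\ref{enu:normal} is then immediate: $f\left(\bar{\mu}\right)=1/\bigl(\sqrt{2\pi}\,\sigma\bigr)$ makes the $\sigma^{2}$ cancel, leaving $400\left(\nicefrac{2}{\pi}\right)^{k-1}/\left(2\pi\right)=100\left(\nicefrac{2}{\pi}\right)^{k}\%$, which is $<100\%$ because $k\geq1$ and $\nicefrac{2}{\pi}<1$; this agrees with the $\nu\rightarrow\infty$ limit of part~\ref{enu:tdist} and the $\alpha\rightarrow\infty$ limit of part~\ref{enu:beta}, as it must, since $T_{\nu}\implies\mathcal{N}$ and the rescaled $\mathrm{Beta}\left(\alpha,\alpha\right)\implies\mathcal{N}$.

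The only steps needing care are the two Stirling-type evaluations --- the $\alpha\rightarrow\infty$ computation in part~\ref{enu:beta} and the $\nu\rightarrow\infty$ computation in part~\ref{enu:tdist} --- together with, for full rigor, the two parenthetical distributional approximations in part~\ref{enu:beta}; none of these is a genuine obstacle, since each is a routine application of Stirling's formula and standard Gamma-ratio asymptotics, and the algebra that reduces the raw product $400\left(\nicefrac{2}{\pi}\right)^{k-1}f\left(\bar{\mu}\right)^{2}\sigma^{2}$ to the displayed forms is just Beta--Gamma bookkeeping.
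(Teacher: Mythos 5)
Your proposal is correct and follows exactly the route the paper intends: the corollary is presented as a direct consequence of Proposition \ref{prop:meanRemed}, obtained by substituting $f\left(\bar{\mu}\right)$ and $\sigma^{2}$ for each family and simplifying with Beta--Gamma identities, which is precisely what you do (and your Stirling/Gamma-ratio asymptotics for the two limiting cases check out). Nothing further is needed.
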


Note that Proposition \ref{prop:meanRemed} implies Corollary \ref{cor:propEG}.\ref{enu:normal}
implies Fact \ref{fact:meanMedian}.\ref{enu:med} and that Corollary
\ref{cor:propEG} parts \ref{enu:beta} and \ref{enu:tdist} include
two interesting extremes:
\begin{enumerate}
\item If $F=\mathrm{Beta}\left(\alpha,\alpha\right)$ and $\alpha\rightarrow0^{+}$,
the efficiency of $\bar{X}_{n}$ infinitely exceeds that of $X_{\left(R_{k,b}\right)}$
because $f\left(\bar{\mu}\right)\rightarrow0^{+}$; 
\item If $F=\mathscr{L}\left(\varsigma T_{\nu}+\bar{\mu}\right)$ and $\nu\rightarrow2^{+}$,
the efficiency of $X_{\left(R_{k,b}\right)}$ infinitely exceeds that
of $\bar{X}_{n}$ because $\sigma^{2}\rightarrow\infty$, producing
many outliers (imagine Figure \ref{fig:ThmMultiNormal} with $\mathrm{Pareto}\left(1,2.01\right)$
data).
\end{enumerate}

\section{Asymptotic Normality of General Estimators \label{sec:Other-Quantiles}}

While our paper so far focuses on estimating the population median
$\bar{\mu}$, we now turn to general estimands: $\tilde{\mu}\coloneqq F^{-1}\left(\tilde{p}\right)$,
for $\tilde{p}\in\left(0,1\right)$; see (\ref{eq:AssumptionCOI}).
In particular, we propose an asymptotic distribution for the low-memory
estimator that results when we use the remedian to estimate the $\ell\geq1$
quantiles $\tilde{\mu}_{j}\coloneqq F^{-1}\left(\tilde{p}_{j}\right)$
(see Conjecture \ref{conj:quantRemedian}). We prove the case $\ell=1$.
Section \ref{subsec:Component-wise} gives preliminaries to our approach,
which expands on an idea from \citet{CL93}; see section \ref{subsec:Any-Quantile}. 

\subsection{Medians and Remedians of Vector Components \label{subsec:Component-wise}}

The following corollary of Lemma \ref{lem:bahadur66} gives the limiting
distribution of the vector that holds the sample quantiles of the
components of i.i.d.\ random vectors.

\begin{restatable}{corollary}{compMedian}

\label{cor:compMedian}For $H\left(\mathbf{x}\right)\coloneqq\Pr\left(X_{1}\leq x_{1},\ldots,X_{\ell}\leq x_{\ell}\right)$,
$H_{j}\left(x\right)\coloneqq\Pr\left(X_{j}\leq x\right)$, $\tilde{\mathbf{p}}\in\left(0,1\right)^{\ell}$,
and $\tilde{\mu}_{j}\coloneqq H_{j}^{-1}\left(\tilde{p}_{j}\right)$,
assume (\ref{eq:AssumptionCOI}) for $\tilde{\mu}_{1},\tilde{\mu}_{2},\ldots,\tilde{\mu}_{\ell}$.
For $\mathbf{X}_{1},\ldots,\mathbf{X}_{n}$ i.i.d.\ $H$ and $Y_{n,j}$
the value with rank $\left\lfloor 1+\left(n-1\right)\tilde{p}_{j}\right\rfloor $
in $X_{1,j},X_{2,j},\ldots,X_{n,j}$, 
\begin{equation}
\sqrt{n}\left(\mathbf{Y}_{n}-\tilde{\boldsymbol{\mu}}\right)\stackrel[\infty]{n}{\implies}\mathcal{N}_{\ell}\left(\mathbf{0},\,\mathbf{D\boldsymbol{\Sigma}D}\right),\label{eq:quantCompLim}
\end{equation}
where $\mathbf{D}\coloneqq\mathrm{diag}\left(\left(h_{j}\left(\tilde{\mu}_{j}\right)\right)_{1\leq j\leq\ell}\right)^{-1}$,
$\tilde{\pi}{}_{j,l}\coloneqq\Pr\left(X_{1,j}\leq\tilde{\mu}_{j},\,X_{1,l}\leq\tilde{\mu}_{l}\right)$,
and
\begin{equation}
\boldsymbol{\Sigma}\coloneqq\left(\begin{array}{cccc}
\tilde{p}_{1}\left(1-\tilde{p}_{1}\right) & \tilde{\pi}{}_{1,2}-\tilde{p}_{1}\tilde{p}_{2} & \cdots & \tilde{\pi}{}_{1,\ell}-\tilde{p}_{1}\tilde{p}_{\ell}\\
\tilde{\pi}{}_{1,2}-\tilde{p}_{1}\tilde{p}_{2} & \tilde{p}_{2}\left(1-\tilde{p}_{2}\right) & \cdots & \tilde{\pi}{}_{2,\ell}-\tilde{p}_{2}\tilde{p}_{\ell}\\
\vdots & \vdots & \ddots & \vdots\\
\tilde{\pi}{}_{1,\ell}-\tilde{p}_{1}\tilde{p}_{\ell} & \tilde{\pi}{}_{2,\ell}-\tilde{p}_{2}\tilde{p}_{\ell} & \cdots & \tilde{p}_{\ell}\left(1-\tilde{p}_{\ell}\right)
\end{array}\right).\label{eq:SigmaMat}
\end{equation}

\end{restatable}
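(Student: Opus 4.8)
The plan is to lift Bahadur's representation (Lemma \ref{lem:bahadur66}) from one coordinate to all $\ell$ coordinates at once and then invoke the multivariate central limit theorem, mirroring the route from Lemma \ref{lem:bahadur66} to Corollary \ref{cor:bahadurcordist}. For each fixed $j\in\{1,\ldots,\ell\}$ the sequence $X_{1,j},X_{2,j},\ldots$ is i.i.d.\ $H_{j}$, by hypothesis $H_{j}$ satisfies (\ref{eq:AssumptionCOI}) at $\tilde{\mu}_{j}$, and $Y_{n,j}$ is exactly the order statistic of rank $\lfloor 1+(n-1)\tilde{p}_{j}\rfloor$ to which Lemma \ref{lem:bahadur66} applies. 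Hence, writing $Z_{n,j}\coloneqq\sum_{i=1}^{n}\mathbf{1}_{\{X_{i,j}>\tilde{\mu}_{j}\}}$ and $h_{j}\coloneqq H_{j}'$, Lemma \ref{lem:bahadur66} gives
\[
\sqrt{n}\,h_{j}(\tilde{\mu}_{j})\bigl(Y_{n,j}-\tilde{\mu}_{j}\bigr)=\frac{Z_{n,j}-n\tilde{q}_{j}}{\sqrt{n}}+\epsilon_{n,j},\qquad \epsilon_{n,j}=O\!\left(\frac{\log n}{n}\right)\ \text{a.s.},
\]
so that $\epsilon_{n,j}=o_{P}(1)$, and since $h_{j}(\tilde{\mu}_{j})>0$ we get $\sqrt{n}(Y_{n,j}-\tilde{\mu}_{j})=h_{j}(\tilde{\mu}_{j})^{-1}n^{-1/2}(Z_{n,j}-n\tilde{q}_{j})+o_{P}(1)$.

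Next I would assemble these coordinatewise expansions into the vector identity $\sqrt{n}(\mathbf{Y}_{n}-\tilde{\boldsymbol{\mu}})=\mathbf{D}\,n^{-1/2}\sum_{i=1}^{n}\mathbf{W}_{i}+\boldsymbol{\epsilon}_{n}$, where $\mathbf{W}_{i}\coloneqq(\mathbf{1}_{\{X_{i,1}>\tilde{\mu}_{1}\}}-\tilde{q}_{1},\ldots,\mathbf{1}_{\{X_{i,\ell}>\tilde{\mu}_{\ell}\}}-\tilde{q}_{\ell})^{\mathsf{T}}$, $\mathbf{D}$ is as in the statement, and each coordinate of $\boldsymbol{\epsilon}_{n}$ is $o_{P}(1)$, so $\boldsymbol{\epsilon}_{n}\to\mathbf{0}$ in probability. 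The $\mathbf{W}_{i}$ are i.i.d., mean-zero (continuity of $H_{j}$ at $\tilde{\mu}_{j}$ gives $\Pr(X_{1,j}\le\tilde{\mu}_{j})=\tilde{p}_{j}=1-\tilde{q}_{j}$), and bounded, hence have finite second moments. A brief inclusion--exclusion computation identifies $\mathrm{Cov}(\mathbf{W}_{1})=\boldsymbol{\Sigma}$: the diagonal entry is $\mathrm{Var}(\mathbf{1}_{\{X_{1,j}>\tilde{\mu}_{j}\}})=\tilde{q}_{j}(1-\tilde{q}_{j})=\tilde{p}_{j}(1-\tilde{p}_{j})$, and for $j\ne l$,
\[
\mathrm{Cov}\bigl(\mathbf{1}_{\{X_{1,j}>\tilde{\mu}_{j}\}},\mathbf{1}_{\{X_{1,l}>\tilde{\mu}_{l}\}}\bigr)=\Pr(X_{1,j}>\tilde{\mu}_{j},\,X_{1,l}>\tilde{\mu}_{l})-\tilde{q}_{j}\tilde{q}_{l}=\tilde{\pi}_{j,l}-\tilde{p}_{j}\tilde{p}_{l},
\]
the last step using $\Pr(X_{1,j}>\tilde{\mu}_{j},\,X_{1,l}>\tilde{\mu}_{l})=1-\tilde{p}_{j}-\tilde{p}_{l}+\tilde{\pi}_{j,l}$; note the sign flip from ``$>$'' to ``$\le$'' indicators leaves the covariance unchanged.

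Finally, I would apply the multivariate central limit theorem to obtain $n^{-1/2}\sum_{i=1}^{n}\mathbf{W}_{i}\implies\mathcal{N}_{\ell}(\mathbf{0},\boldsymbol{\Sigma})$ (alternatively, run the Cram\'er--Wold device to reduce to the univariate CLT for $\mathbf{a}^{\mathsf{T}}\mathbf{W}_{i}$), then push this through the fixed linear map $\mathbf{D}$ and absorb $\boldsymbol{\epsilon}_{n}$ via Slutsky's theorem, which yields (\ref{eq:quantCompLim}) with limiting covariance $\mathbf{D}\boldsymbol{\Sigma}\mathbf{D}$. There is no deep obstacle: everything is driven by Lemma \ref{lem:bahadur66}. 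The two points requiring care are (i) that the Bahadur remainder $\boldsymbol{\epsilon}_{n}$ be negligible \emph{jointly} across all $\ell$ coordinates, which follows by taking the union over the finitely many $j$ of the almost-sure $O(\log n/n)$ bounds, and (ii) keeping the bookkeeping consistent between the ``$>$'' events produced by Lemma \ref{lem:bahadur66} and the ``$\le$'' events used to define $\tilde{\pi}_{j,l}$ and $\boldsymbol{\Sigma}$ in (\ref{eq:SigmaMat}).
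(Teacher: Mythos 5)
Your proposal is correct and follows essentially the same route as the paper: coordinate-wise application of Lemma \ref{lem:bahadur66}, the multivariate CLT on the centered indicator vectors, and Slutsky to absorb the Bahadur remainders. In particular, your observation that the covariance of the ``$>$'' indicators equals that of the ``$\le$'' indicators is exactly the identity (\ref{eq:crossMult}) the paper highlights as the step overlooked in \citet{BR88}.
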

\begin{proof}
Use Lemma \ref{lem:bahadur66}, the multivariate CLT, (\ref{eq:AssumptionCOI})
for $\tilde{\mu}_{1},\tilde{\mu}_{2},\ldots,\tilde{\mu}_{\ell}$,
and
\begin{multline}
\Pr\left(X_{1,j}>\tilde{\mu}_{j},\,X_{1,l}>\tilde{\mu}_{l}\right)-\Pr\left(X_{1,j}>\tilde{\mu}_{j}\right)\Pr\left(X_{1,l}>\tilde{\mu}_{l}\right)\\
=\Pr\left(X_{1,j}\leq\tilde{\mu}_{j},\,X_{1,l}\leq\tilde{\mu}_{l}\right)-\Pr\left(X_{1,j}\leq\tilde{\mu}_{j}\right)\Pr\left(X_{1,l}\leq\tilde{\mu}_{l}\right).\label{eq:crossMult}
\end{multline}
The proof of Theorem 2.1 in \citet{BR88} mistakenly uses $n-Z_{n}$
in place of $Z_{n}$ when applying Lemma \ref{lem:bahadur66} and
therefore overlooks the need for (\ref{eq:crossMult}).
\end{proof}
While Corollary \ref{cor:compMedian} applies, \emph{e.g.}, the median
to the components of the $\mathbf{X}_{i}$, we now consider what happens
when we replace the median with the \emph{re}median. The median to
remedian analogy of Corollary \ref{cor:bahadurcordist} to Theorem
\ref{thm:rem_to_normal} suggests that:

\begin{restatable}{conjecture}{compRemedian}

\label{conj:compRemedian}Let $H\left(\mathbf{x}\right)\coloneqq\Pr\left(X_{1}\leq x_{1},\ldots,X_{\ell}\leq x_{\ell}\right)$,
$H_{j}\left(x\right)\coloneqq\Pr\left(X_{j}\leq x\right)$, and $\bar{\mu}_{j}\coloneqq H_{j}^{-1}\left(\nicefrac{1}{2}\right)$.
Assume (\ref{eq:AssumptionCOI}) for $\bar{\mu}_{1},\bar{\mu}_{2},\ldots,\bar{\mu}_{\ell}$.
For $\mathbf{X}_{1},\ldots,\mathbf{X}_{b^{k}}$ i.i.d.\ $H$ and
$Y_{k,b;j}$ the $k\times b$ remedian estimate for values in $X_{1,j},X_{2,j},\ldots,X_{b^{k},j}$,
\[
b^{k/2}\left(\mathbf{Y}_{k,b}-\bar{\boldsymbol{\mu}}\right)\stackrel[\infty]{b}{\implies}\mathcal{N}_{\ell}\left(\mathbf{0},\,\left(\nicefrac{\pi}{2}\right)^{k-1}\mathbf{D\boldsymbol{\Sigma}D}\right),
\]
where $\mathbf{D}$, $\boldsymbol{\Sigma}$ are as in Corollary \ref{cor:compMedian}
with $\bar{p}_{j}=\nicefrac{1}{2}$ and $\bar{\mu}_{j}$ instead of
$\tilde{p}_{j}$ and $\tilde{\mu}_{j}$.

\end{restatable}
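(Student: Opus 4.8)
\emph{Toward a proof.} The plan is to bootstrap off the univariate result, Theorem~\ref{thm:rem_to_normal}, and to isolate the genuinely new content---the limiting cross-covariances---by way of a Bahadur-type linear representation of each component's remedian. First, as in the proof of Theorem~\ref{thm:rem_med_to_quad_normal}, the Cram\'er--Wold device reduces the claim to showing that for every fixed $\mathbf a\in\mathbb R^{\ell}$,
\[
\mathbf a^{\mathsf T}b^{k/2}\bigl(\mathbf Y_{k,b}-\bar{\boldsymbol\mu}\bigr)\implies\mathcal N\!\left(0,\;\bigl(\nicefrac\pi2\bigr)^{k-1}\,\mathbf a^{\mathsf T}\mathbf{D\boldsymbol{\Sigma}D}\,\mathbf a\right)\qquad\text{as }b\to\infty .
\]
To turn a linear combination of the $\ell$ component estimates into one asymptotically normal sum, I would relabel the $b^{k}$ input vectors as a $k$-way array and, pushing on the argument behind Theorem~\ref{thm:rem_to_normal} (Appendix~\ref{sec:CC05-Arg}) together with Lemma~\ref{lem:bahadur66}, establish for each $j$ an asymptotic linear representation
\[
2f_{j}(\bar\mu_{j})\,b^{k/2}\bigl(Y_{k,b;j}-\bar\mu_{j}\bigr)=\xi_{b;j}+o_{P}(1),\qquad\xi_{b;j}\implies\mathcal N\bigl(0,(\nicefrac\pi2)^{k-1}\bigr),
\]
in which $\xi_{b;j}$ is, up to $o_{P}(1)$, a normalized sum over the $b^{k-1}$ first-row cells of i.i.d.\ mean-zero contributions built from the exceedance indicators $\mathbf 1\{X_{\mathrm{cell};j}>\bar\mu_{j}\}$ filtered through the $k$ median layers. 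Because all $\ell$ components share the same cells, $(\xi_{b;1},\dots,\xi_{b;\ell})$ is a normalized sum of i.i.d.\ $\ell$-vectors; a multivariate Lindeberg CLT then gives joint asymptotic normality of $(\xi_{b;1},\dots,\xi_{b;\ell})$, and hence---after premultiplying by $\tfrac12\mathbf D$---of $b^{k/2}(\mathbf Y_{k,b}-\bar{\boldsymbol\mu})$, with limiting covariance controlled by the matrix $\boldsymbol\Xi$ whose $(j,l)$ entry is $\lim_{b}\mathrm{Cov}(\xi_{b;j},\xi_{b;l})$.

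It then remains to compute $\boldsymbol\Xi$ and to match $\tfrac14\mathbf D\boldsymbol\Xi\mathbf D$ with $(\nicefrac\pi2)^{k-1}\mathbf{D\boldsymbol{\Sigma}D}$, i.e.\ to show $\boldsymbol\Xi=4(\nicefrac\pi2)^{k-1}\boldsymbol\Sigma$. The diagonal is free: $\lim_{b}\mathrm{Var}(\xi_{b;j})=(\nicefrac\pi2)^{k-1}$ is precisely Theorem~\ref{thm:rem_to_normal}, and since $\bar\pi_{j,j}:=\Pr(X_{1,j}\le\bar\mu_{j})=\nicefrac12$ forces $\Sigma_{jj}=\nicefrac14$, this equals $4(\nicefrac\pi2)^{k-1}\Sigma_{jj}$. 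Thus the whole weight of the conjecture lies in the off-diagonal identity $\lim_{b}\mathrm{Cov}(\xi_{b;j},\xi_{b;l})=(\nicefrac\pi2)^{k-1}(4\bar\pi_{j,l}-1)$, where $\bar\pi_{j,l}:=\Pr(X_{1,j}\le\bar\mu_{j},\,X_{1,l}\le\bar\mu_{l})$. I would attack this through nested orthant probabilities: with $b=2m+1$, the event $\{Y_{k,b;j}>\bar\mu_{j}\}$ holds iff at least $m+1$ of the $b$ level-$(k-1)$ component-$j$ medians exceed $\bar\mu_{j}$; iterating down through the layers turns the joint event $\{Y_{k,b;j}>\bar\mu_{j}\}\cap\{Y_{k,b;l}>\bar\mu_{l}\}$ into a $k$-fold nested system of correlated binomial counts whose per-layer exceedance-indicator correlation obeys a recursion $\rho^{(i+1)}=\psi(\rho^{(i)})$ with seed $\rho^{(1)}=4\bar\pi_{j,l}-1$. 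At each layer the two counts are asymptotically bivariate normal (bivariate CLT, the relevant centered thresholds being of order $b^{-1/2}\to0$), so exceedance probabilities pass to the corresponding Gaussian orthant probabilities; confirming that the resulting composition $\psi^{\circ(k-1)}$ reproduces the claimed off-diagonal correlation---once the variance inflation already supplied by Theorem~\ref{thm:rem_to_normal} is stripped out---is the crux.

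That identification is the main obstacle. For $k=1$ the statement collapses to Corollary~\ref{cor:compMedian} with $\bar p_{j}=\nicefrac12$, and the diagonal is free for all $k$; but for $k\ge2$ everything hinges on how the correlation recursion behaves under composition, and a careful bivariate-normal orthant computation at each of the $k$ median layers is needed to pin down $\psi^{\circ(k-1)}$ and so confirm the precise off-diagonal form. A secondary, supporting difficulty is that the level-$i$ medians have $b$-dependent distributions, so Lemma~\ref{lem:bahadur66} and the univariate CLT enter only after a uniform (triangular-array) refinement of the linear representation---the same technical point that already makes Theorem~\ref{thm:rem_to_normal} delicate. The remaining ingredients---the Lindeberg condition for the per-cell $\ell$-vectors, uniform integrability to upgrade convergence in distribution to convergence of covariances, and the $o_{P}(1)$ bookkeeping---are routine given the machinery behind Theorem~\ref{thm:rem_to_normal}.
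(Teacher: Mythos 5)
First, a point of comparison: the statement you set out to prove is stated in the paper as a \emph{conjecture}; the paper offers no proof of it (it proves only the univariate case, Theorem~\ref{thm:rem_to_normal} and Corollary~\ref{cor:other_to_normal}, and derives Conjecture~\ref{conj:quantRemedian} from this one). So the only question is whether your argument is complete on its own, and it is not: you explicitly leave open the one step that carries all the new content, namely the limiting cross-covariances, so what you have is a program rather than a proof. There is also a structural problem with the asserted linearization. For $k\ge2$ the remedian is \emph{not} asymptotically a normalized i.i.d.\ sum over the $b^{k-1}$ first-row cells of contributions built from the raw exceedance indicators: any such representation would force the median's asymptotic variance, contradicting the factor $(\pi/2)^{k-1}$ in Theorem~\ref{thm:rem_to_normal}. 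The only Bahadur-type linearization available is at the top layer, as a sum over the $b$ row-$k$ entries, each of which is a recursively defined majority-vote functional of its $b^{k-1}$ inputs; your second paragraph effectively works at that level, but the representation claimed in your first paragraph cannot hold as stated, and the joint CLT must therefore be applied to the top-layer indicators, whose cross-component dependence is exactly the hard part.

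More seriously, if you carry your own layer-by-layer orthant computation to completion it does not confirm the conjectured $\boldsymbol{\Sigma}$. For a bivariate normal pair with correlation $\rho$, the orthant formula gives correlation $\tfrac{2}{\pi}\arcsin\rho$ for the sign indicators, so the per-layer map in your recursion is $\psi(\rho)=\tfrac{2}{\pi}\arcsin\rho$, and the $k$-layer cross-correlation of the two component remedians converges (heuristically, modulo the triangular-array refinements you mention) to $\psi^{\circ(k-1)}\!\left(4\bar{\pi}_{j,l}-1\right)$ with $\bar{\pi}_{j,l}\coloneqq\Pr\left(X_{1,j}\le\bar{\mu}_{j},\,X_{1,l}\le\bar{\mu}_{l}\right)$, not to $4\bar{\pi}_{j,l}-1$ itself. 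Since $\left|\psi(\rho)\right|<\left|\rho\right|$ except at $\rho\in\{-1,0,1\}$, the composition $\psi^{\circ(k-1)}$ is not the identity, so for $k\ge2$ your route produces off-diagonal entries $\frac{(\pi/2)^{k-1}}{4f_{j}(\bar{\mu}_{j})f_{l}(\bar{\mu}_{l})}\,\psi^{\circ(k-1)}\!\left(4\bar{\pi}_{j,l}-1\right)$ rather than the conjectured $\frac{(\pi/2)^{k-1}}{4f_{j}(\bar{\mu}_{j})f_{l}(\bar{\mu}_{l})}\left(4\bar{\pi}_{j,l}-1\right)$; only the diagonal (and the case $k=1$, which is Corollary~\ref{cor:compMedian}) matches automatically. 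In other words, the ``crux'' you defer is not merely unfinished: your own machinery points toward a different off-diagonal constant, so either you must exhibit a compensating effect that the orthant heuristic misses, or the outcome of your program is a corrected version of the statement together with its proof, not a verification of the conjecture as written.
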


That is, Corollary \ref{cor:bahadurcordist} and Theorem \ref{thm:rem_to_normal}
suggest that we multiply the variance on the RHS of (\ref{eq:quantCompLim})
by $\left(\nicefrac{\pi}{2}\right)^{k-1}$ (\emph{cf.}\ Figure \ref{fig:OtherQuant}).
Conjecture \ref{conj:quantRemedian} applies Conjecture \ref{conj:compRemedian}
to remedian estimators for $\tilde{\mu}_{j}\coloneqq F^{-1}\left(\tilde{p}_{j}\right)$
with $0<\tilde{p}_{1}<\tilde{p}_{2}<\cdots<\tilde{p}_{\ell}<1$.

\subsection{Remedian Estimates for Multiple Quantiles \label{subsec:Any-Quantile}}

\citet{CL93} describe the following remedian-based estimator of $F^{-1}\left(p\right)$,
for any $p\in\left(0,1\right)$. Start by finding $1\leq K\leq N$
for which 
\begin{equation}
\tilde{p}\coloneqq B_{K,N}^{-1}\left(\nicefrac{1}{2}\right)\approx p,\label{eq:defptild}
\end{equation}
for $B_{K,N}$ as in (\ref{eq:BjnDef}). Fixing $b\in\mathcal{B}_{3}$
and $k\geq1$, we add a buffer of size $N$ above the usual $k\times b$
remedian. With $n\coloneqq Nb^{k}$, we imagine a streaming setting
in which the data points $X_{1},X_{2},\ldots,X_{n}$ i.i.d.\ $F$
first enter the $N$-buffer. Whenever the $N$-buffer becomes full,
it passes its $K$th order statistic on to the first row of the remedian
and reverts to the empty state. Let $\tilde{\mu}\coloneqq F^{-1}\left(\tilde{p}\right)$
and
\[
X_{\left(K:N;R_{k,b}:b^{k}\right)}\coloneqq\textrm{the value returned after processing all }n\textrm{ data points.}
\]
Then, putting $m_{K,N}\coloneqq\min\left(K,N-K+1\right)$ and citing
\S\ref{subsec:Breakdown-Point} and \S\ref{subsec:Remedian's-Distribution},
we have
\begin{equation}
\epsilon_{K:N;R_{k,b}:b^{k}}^{*}\coloneqq\frac{m_{K,N}}{N}\left(\frac{\left\lceil \nicefrac{b}{2}\right\rceil }{b}\right)^{k},\quad X_{\left(K:N;R_{k,b}:b^{k}\right)}\sim\Psi_{b}^{\left(k\right)}\circ B_{K,N}\circ F\label{eq:bdp-distn}
\end{equation}
for the finite-$n$ breakdown point and distribution of $X_{\left(K:N;R_{k,b}:b^{k}\right)}$.
With data from $G\coloneqq B_{K,N}\circ F$ we obtain the following
corollaries of Theorems \ref{thm:asConv} and \ref{thm:rem_to_normal}.

\begin{restatable}{corollary}{asOtherQuant}

\label{cor:asConvOther}Under (\ref{eq:Xi}) and (\ref{eq:AssumptionCOI})
we have $\Pr\left(\lim_{b\rightarrow\infty}X_{\left(K:N;R_{k,b}:b^{k}\right)}=\tilde{\mu}\right)=1$.

\end{restatable}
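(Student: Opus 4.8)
The plan is to deduce Corollary \ref{cor:asConvOther} directly from Theorem \ref{thm:asConv} by applying that theorem not to the raw data but to the transformed data. Recall that the estimator $X_{\left(K:N;R_{k,b}:b^{k}\right)}$ processes $n=Nb^{k}$ i.i.d.\ $F$ values by first passing each full $N$-buffer's $K$th order statistic into an ordinary $k\times b$ remedian; by (\ref{eq:bdp-distn}) its distribution is $\Psi_{b}^{\left(k\right)}\circ B_{K,N}\circ F$. The key observation is that the $b^{k}$ intermediate values fed into the remedian are themselves i.i.d.\ with CDF $G\coloneqq B_{K,N}\circ F$, since the $N$-buffers use disjoint blocks of the original i.i.d.\ stream. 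Hence $X_{\left(K:N;R_{k,b}:b^{k}\right)}$ is, in distribution, exactly the output of a $k\times b$ remedian run on i.i.d.\ $G$ data.

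First I would verify that $G=B_{K,N}\circ F$ satisfies the hypotheses (\ref{eq:AssumptionCOI}) needed to invoke Theorem \ref{thm:asConv}. Writing $\tilde{p}=B_{K,N}\left(\nicefrac12\right)$ as in (\ref{eq:defptild}) (note: the intended reading is $\tilde p = B_{K,N}^{-1}(\nicefrac12)$ so that $\tilde\mu=F^{-1}(\tilde p)$ is the population median of $G$, i.e.\ $G(\tilde\mu)=\nicefrac12$), I would check that $G'=\left(b_{K,N}\circ F\right)\cdot F'$ — where $b_{K,N}=B_{K,N}'$ is the Beta density — is continuous and positive on a neighborhood of $\tilde\mu$, and that $G'$ and $G''$ are bounded there. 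Continuity and positivity follow from the assumed continuity and positivity of $F'$ near $\tilde\mu$ together with the fact that the Beta$(K,N-K+1)$ density is continuous and strictly positive on the open interval $(0,1)$, and $F(\tilde\mu)=\tilde p\in(0,1)$. Boundedness of $G'$ and $G''$ on a small enough neighborhood follows from boundedness of $F',F''$ (assumption (\ref{eq:AssumptionCOI}) for $F$ at $\tilde\mu$) plus boundedness of $b_{K,N}$ and $b_{K,N}'$ on a compact sub-interval of $(0,1)$ via the chain rule, $G''=\left(b_{K,N}'\circ F\right)(F')^2+\left(b_{K,N}\circ F\right)F''$.

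Having checked the hypotheses, I would apply Theorem \ref{thm:asConv} with $G$ in place of $F$: it gives $X_{\left(R_{k,b}\right)}^{G}\longrightarrow G^{-1}\left(\nicefrac12\right)$ almost surely as $b\rightarrow\infty$, where $X_{\left(R_{k,b}\right)}^{G}$ denotes the remedian of $b^k$ i.i.d.\ $G$ values. Since $G^{-1}\left(\nicefrac12\right)=F^{-1}\left(B_{K,N}^{-1}\left(\nicefrac12\right)\right)=F^{-1}\left(\tilde p\right)=\tilde\mu$, and since $X_{\left(K:N;R_{k,b}:b^{k}\right)}\stackrel{\mathscr L}{=}X_{\left(R_{k,b}\right)}^{G}$, we conclude $\Pr\left(\lim_{b\rightarrow\infty}X_{\left(K:N;R_{k,b}:b^{k}\right)}=\tilde\mu\right)=1$.

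One subtlety worth being careful about: almost-sure convergence is not a property of the marginal law alone, so to pass from "$X_{\left(K:N;R_{k,b}:b^{k}\right)}$ has the same law as a $G$-remedian" to an almost-sure statement I should either (i) note that the canonical construction — take the blocks to be genuinely independent across $b$ — realizes $X_{\left(K:N;R_{k,b}:b^{k}\right)}$ exactly as the $G$-remedian on an honest i.i.d.\ $G$ sequence, so Theorem \ref{thm:asConv} applies verbatim; or (ii) observe that the proof of Theorem \ref{thm:asConv} cited from \citet{CC05}, being a Borel--Cantelli-type argument, really only uses summability of tail probabilities that depend on the one-dimensional marginals $\mathscr{L}(X_{(R_{k,b})})$, which are identical in the two settings. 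Either way the step is routine; the only real content is the density-regularity check for $G$, which is where I would spend most of the effort, and it is the natural main obstacle — though a mild one, since the Beta density is as well-behaved as one could want on the interior of its support.
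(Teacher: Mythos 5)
Your proposal is correct and takes essentially the same route as the paper, whose entire justification is the remark that ``with data from $G\coloneqq B_{K,N}\circ F$'' the statement is a corollary of Theorem \ref{thm:asConv}, i.e.\ the $N$-buffer outputs are i.i.d.\ $G$ and the ordinary remedian theory applies with $G$ in place of $F$. Your explicit check that $G$ inherits assumption (\ref{eq:AssumptionCOI}) at $\tilde{\mu}=G^{-1}\left(\nicefrac{1}{2}\right)$, and your note on realizing $X_{\left(K:N;R_{k,b}:b^{k}\right)}$ as an honest $G$-remedian so the almost-sure statement transfers, merely fill in details the paper leaves implicit.
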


\begin{restatable}{corollary}{otherToNormal}

\label{cor:other_to_normal}Under (\ref{eq:Xi}) and (\ref{eq:AssumptionCOI})
and letting $\beta_{K,N}\left(x\right)\coloneqq\frac{d}{dx}B{}_{K,N}\left(x\right)$
we have
\[
2b^{\nicefrac{k}{2}}\beta_{K,N}\left(\tilde{p}\right)f\left(\tilde{\mu}\right)\left(X_{\left(K:N;R_{k,b}:b^{k}\right)}-\tilde{\mu}\right)\stackrel[\infty]{b}{\Longrightarrow}\mathcal{N}\left(0,\left(\nicefrac{\pi}{2}\right)^{k-1}\right).
\]

\end{restatable}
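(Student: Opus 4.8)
The plan is to reduce the statement to Theorem~\ref{thm:rem_to_normal} by recognizing $X_{\left(K:N;R_{k,b}:b^{k}\right)}$ as an ordinary $k\times b$ remedian fed i.i.d.\ draws from the transformed CDF $G\coloneqq B_{K,N}\circ F$. Because the successive $N$-buffers act on disjoint blocks of the i.i.d.\ sequence $X_{1},\ldots,X_{n}$, the values they pass downstream---the block $K$th order statistics $Y_{1},\ldots,Y_{b^{k}}$---are i.i.d., each with CDF $B_{K,N}\circ F$ by~(\ref{eq:BjnDef}). Hence $X_{\left(K:N;R_{k,b}:b^{k}\right)}$ equals, in distribution, the $k\times b$ remedian of $b^{k}$ i.i.d.\ $G$-values, consistent with the law $\Psi_{b}^{\left(k\right)}\circ B_{K,N}\circ F$ recorded in~(\ref{eq:bdp-distn}).

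Before invoking Theorem~\ref{thm:rem_to_normal} with $F$ replaced by $G$, I would verify that $G$ meets its hypotheses: (\ref{eq:Xi}) is immediate since the $Y_i$ are i.i.d., so the work lies in checking~(\ref{eq:AssumptionCOI_half}) at $G^{-1}(\nicefrac{1}{2})$. As $1\le K\le N$, the median $\tilde p=B_{K,N}^{-1}(\nicefrac{1}{2})$ of the Beta$(K,N-K+1)$ law lies in $(0,1)$, so $G^{-1}(\nicefrac{1}{2})=F^{-1}(\tilde p)=\tilde\mu$; in particular the centering constant in the corollary is exactly the population median of $G$. The density is $g=G'=\beta_{K,N}(F)\,f$. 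On a neighborhood of $\tilde\mu$ where~(\ref{eq:AssumptionCOI}) holds, $F$ is continuous with $F(\tilde\mu)=\tilde p\in(0,1)$, so---after possibly shrinking that neighborhood---the polynomial $\beta_{K,N}\circ F$ is continuous and positive there; hence $g$ is continuous and positive, and bounded, since $\beta_{K,N}$ is bounded on $[0,1]$ and $f$ is bounded near $\tilde\mu$. Differentiating, $g'=(\beta_{K,N}'\circ F)\,f^{2}+(\beta_{K,N}\circ F)\,f'$ is bounded near $\tilde\mu$ because $\beta_{K,N}$ and $\beta_{K,N}'$ are bounded on $[0,1]$ and $f,f'$ are bounded there. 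Thus $G$ satisfies~(\ref{eq:AssumptionCOI_half}).

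Theorem~\ref{thm:rem_to_normal} applied to the i.i.d.\ $G$-sample then yields $2b^{\nicefrac{k}{2}}\,g(\tilde\mu)\bigl(X_{\left(K:N;R_{k,b}:b^{k}\right)}-\tilde\mu\bigr)\implies\mathcal{N}\bigl(0,(\nicefrac{\pi}{2})^{k-1}\bigr)$ as $b\to\infty$, and substituting $g(\tilde\mu)=\beta_{K,N}(F(\tilde\mu))\,f(\tilde\mu)=\beta_{K,N}(\tilde p)\,f(\tilde\mu)$---precisely the scaling constant in the statement---completes the argument. I expect no deep obstacle: the substantive content is carried by Theorem~\ref{thm:rem_to_normal}, and the only step needing genuine care is the regularity transfer in the middle paragraph, namely confirming that composing with the smooth, bounded CDF $B_{K,N}$ preserves continuity, positivity, and boundedness of the density together with boundedness of its derivative on a neighborhood of $\tilde\mu$---in particular that $F$ stays strictly interior to $(0,1)$ there, so $\beta_{K,N}\circ F$ never vanishes. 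The same reduction turns Theorem~\ref{thm:asConv} into Corollary~\ref{cor:asConvOther}.
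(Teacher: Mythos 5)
Your proposal is correct and is exactly the route the paper takes: the text introduces the corollary with the single phrase ``With data from $G\coloneqq B_{K,N}\circ F$ we obtain the following corollaries of Theorems \ref{thm:asConv} and \ref{thm:rem_to_normal},'' i.e.\ the blockwise $K$th order statistics are i.i.d.\ $B_{K,N}\circ F$ and Theorem \ref{thm:rem_to_normal} is applied to that sample. Your middle paragraph verifying that $G$ inherits (\ref{eq:AssumptionCOI_half}) at $\tilde\mu$ (positivity of $\beta_{K,N}\circ F$ near $\tilde\mu$, boundedness of $g$ and $g'$) is a detail the paper leaves implicit, and it is checked correctly.
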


Results like these, with $k,b\rightarrow\infty$ in the latter case,
appear in \citet{CC05}. The following multivariate proposal moves
beyond \citet{CC05}:

\begin{restatable}{conjecture}{quantRemedian}

\label{conj:quantRemedian}Fix $1\leq K_{1}<\cdots<K_{\ell}\leq N$,
and let $X_{1},X_{2},\ldots,X_{Nb^{k}}\stackrel{\mathrm{iid}}{\sim}F$.
For $1\leq i\leq b^{k}$ let $\mathcal{X}_{i}\coloneqq\left\{ X_{\left(i-1\right)N+1},X_{\left(i-1\right)N+2},\ldots,X_{iN}\right\} $.
For $1\leq j\leq\ell$ pass the $K_{j}$th order statistic of each
$\mathcal{X}_{i}$ to the $j$th remedian, and let the $j$th coordinate
of $\mathbf{X}_{\left(\mathbf{K}:N;\mathbf{R}_{k,b}:b^{k}\right)}$
hold the value the $j$th remedian returns. With 
\[
\tilde{\mu}_{j}\coloneqq F^{-1}\left(\tilde{p}_{j}\right)\coloneqq F^{-1}\left(B_{K_{j},N}^{-1}\left(\nicefrac{1}{2}\right)\right),
\]
for $1\leq j\leq\ell$, as in (\ref{eq:defptild}), assume that (\ref{eq:AssumptionCOI})
holds for each $\tilde{\mu}_{1},\tilde{\mu}_{2},\ldots,\tilde{\mu}_{\ell}$.
Then, 
\[
b^{\nicefrac{k}{2}}\left(\mathbf{X}_{\left(\mathbf{K}:N;\mathbf{R}_{k,b}:b^{k}\right)}-\tilde{\boldsymbol{\mu}}\right)\stackrel[\infty]{b}{\Longrightarrow}\mathcal{N}_{\ell}\left(\mathbf{0},\,\left(\nicefrac{\pi}{2}\right)^{k-1}\mathbf{D}\boldsymbol{\Sigma}\mathbf{D}\right),
\]
where
\[
\mathbf{D}\coloneqq\mathrm{diag}\left(\left(\beta_{K_{j}:N}\left(\tilde{p}_{j}\right)f\left(\tilde{\mu}_{j}\right)\right)_{1\leq j\leq\ell}\right)^{-1}
\]
and $\boldsymbol{\Sigma}$ comes from Conjecture \ref{conj:compRemedian}
and has $\bar{\pi}_{j,l}\coloneqq\Pr\left(Y_{1}\leq\tilde{p}_{j\wedge l},Y_{1}+Y_{2}\leq\tilde{p}{}_{j\vee l}\right)$
for trivariate $\mathbf{Y}\sim\mathrm{Dirichlet}\left(K_{j\wedge l},\,K_{j\vee l}-K_{j\wedge l},\,N+1-K_{j\vee l}\right)$.

\end{restatable}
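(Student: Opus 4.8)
\textbf{Proof proposal for Conjecture \ref{conj:quantRemedian}.}

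The plan is to reduce Conjecture \ref{conj:quantRemedian} to Conjecture \ref{conj:compRemedian} exactly as Corollary \ref{cor:other_to_normal} reduces to Theorem \ref{thm:rem_to_normal}, but now in the multivariate setting, and then to identify the covariance structure via the Dirichlet representation of order statistics from a uniform sample. First I would set up the random-vector picture demanded by Conjecture \ref{conj:compRemedian}: for $1\le i\le b^{k}$ define the $\ell$-vector $\mathbf{V}_{i}$ whose $j$th coordinate is the $K_{j}$th order statistic of the block $\mathcal{X}_{i}$ of $N$ i.i.d.\ $F$ values. The $\mathbf{V}_{i}$ are i.i.d.\ from some joint law $H$ on $\mathbb{R}^{\ell}$, and the $j$th remedian in the construction is simply the $k\times b$ remedian run on $V_{1,j},\dots,V_{b^{k},j}$. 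So $\mathbf{X}_{\left(\mathbf{K}:N;\mathbf{R}_{k,b}:b^{k}\right)}$ is literally the vector $\mathbf{Y}_{k,b}$ of Conjecture \ref{conj:compRemedian} applied to these $\mathbf{V}_{i}$, provided the per-coordinate medians of $H$ are the $\tilde\mu_{j}$. That holds by (\ref{eq:bdp-distn}): the $j$th coordinate of $\mathbf{V}_{1}$ has CDF $B_{K_{j},N}\circ F$, whose median is $F^{-1}(B_{K_{j},N}^{-1}(1/2))=\tilde\mu_{j}$, matching the definition of $\tilde p_{j}$ in (\ref{eq:defptild}). I would also check that (\ref{eq:AssumptionCOI}) transfers from $F$ at $\tilde\mu_{j}$ to $H_{j}=B_{K_{j},N}\circ F$ at $\tilde\mu_{j}$: the chain rule gives $H_{j}'=(\beta_{K_{j},N}\circ F)\cdot f$, which is continuous, positive, and bounded on a neighborhood of $\tilde\mu_{j}$ since $\beta_{K_{j},N}$ is a smooth positive Beta density on $(0,1)$ and $F$ maps a neighborhood of $\tilde\mu_{j}$ into a neighborhood of $\tilde p_{j}\in(0,1)$; boundedness of $H_{j}''$ follows similarly. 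This licenses the invocation of Conjecture \ref{conj:compRemedian}.

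Granting Conjecture \ref{conj:compRemedian}, it yields $b^{k/2}(\mathbf{Y}_{k,b}-\tilde{\boldsymbol\mu})\implies\mathcal{N}_{\ell}(\mathbf{0},(\pi/2)^{k-1}\mathbf{D}_{H}\boldsymbol\Sigma_{H}\mathbf{D}_{H})$, where $\mathbf{D}_{H}=\mathrm{diag}((H_{j}'(\tilde\mu_{j}))_{j})^{-1}=\mathrm{diag}((\beta_{K_{j},N}(\tilde p_{j})f(\tilde\mu_{j}))_{j})^{-1}$ — which is exactly the $\mathbf{D}$ in the conjecture — and $\boldsymbol\Sigma_{H}$ has diagonal entries $\tilde p_{j}(1-\tilde p_{j})$ (since the $j$th marginal is $\mathrm{Beta}$-transformed and $H_{j}(\tilde\mu_{j})=\tilde p_{j}=1/2$, wait — no: $H_{j}(\tilde\mu_{j})=B_{K_{j},N}(F(\tilde\mu_{j}))=B_{K_{j},N}(\tilde p_{j})$; but the relevant quantity in $\boldsymbol\Sigma$ from Corollary \ref{cor:compMedian} is $\bar p_{j}(1-\bar p_{j})$ with $\bar p_{j}=1/2$ because we take \emph{medians} of the coordinates, so the diagonal is $1/4$) and off-diagonal entries $\bar\pi_{j,l}-\tfrac14$ where $\bar\pi_{j,l}=\Pr(V_{1,j}\le\tilde\mu_{j},\,V_{1,l}\le\tilde\mu_{l})$. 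So the whole content reduces to computing this last joint probability in closed form, and the claim is that it equals $\Pr(Y_{1}\le\tilde p_{j\wedge l},\,Y_{1}+Y_{2}\le\tilde p_{j\vee l})$ for $\mathbf{Y}\sim\mathrm{Dirichlet}(K_{j\wedge l},K_{j\vee l}-K_{j\wedge l},N+1-K_{j\vee l})$.

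For that computation I would use the standard fact that if $U_{1},\dots,U_{N}$ are i.i.d.\ $\mathrm{Uniform}(0,1)$ with order statistics $U_{(1)}<\cdots<U_{(N)}$, then for $K<K'$ the pair $(U_{(K)},U_{(K')})$ has the same law as $(Y_{1},Y_{1}+Y_{2})$ with $\mathbf{Y}=(Y_{1},Y_{2},Y_{3})\sim\mathrm{Dirichlet}(K,K'-K,N+1-K')$. Writing $V_{1,j}=F^{-1}(U_{(K_{j})})$ and $V_{1,l}=F^{-1}(U_{(K_{l})})$ from the \emph{same} underlying uniforms (here I use that both coordinates come from the single block $\mathcal{X}_{1}$, which is the whole reason the $\bar\pi_{j,l}$ are nontrivial), and applying $F$ monotonically, the event $\{V_{1,j}\le\tilde\mu_{j},\,V_{1,l}\le\tilde\mu_{l}\}$ becomes $\{U_{(K_{j})}\le\tilde p_{j},\,U_{(K_{l})}\le\tilde p_{l}\}$; taking $j\wedge l$ and $j\vee l$ to order the indices and substituting the Dirichlet representation gives the stated formula. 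The main obstacle is not this Dirichlet bookkeeping — it is honest but routine — but rather Conjecture \ref{conj:compRemedian} itself, which the paper leaves open: one needs a multivariate extension of the Cesàro–Cramér–Wold argument behind Theorem \ref{thm:rem_to_normal}, i.e.\ to show that the dependence induced by the shared block $\mathcal{X}_{i}$ survives the recursive median compositions $\Psi_{b}^{(k)}$ with the covariance scaling by exactly $(\pi/2)^{k-1}$ in every entry, diagonal and off-diagonal alike. I expect this to be provable by the Cramér–Wold device: reduce to one-dimensional statements of the form handled by Theorem \ref{thm:rem_to_normal} applied to linear combinations $\mathbf{a}^{\mathsf{T}}\mathbf{V}_{i}$, after a Bahadur-type linearization (Lemma \ref{lem:bahadur66}) that replaces each coordinate remedian by its linear approximation $Z_{n}$-term plus a uniformly negligible remainder, at which point the covariance of the linearized vector is exactly $\mathbf{D}_{H}\boldsymbol\Sigma_{H}\mathbf{D}_{H}$ before the $(\pi/2)^{k-1}$ inflation coming from rank noise (Theorem \ref{thm:rem_rank_to_normal}). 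Carrying that linearization through the nested $\Psi_{b}^{(1)}$ maps while controlling the joint remainder is where the real work lies; everything downstream is substitution.
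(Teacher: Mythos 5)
Your proposal is correct and follows essentially the same route as the paper: the paper's (one-sentence) argument is precisely to apply Conjecture \ref{conj:compRemedian} to the i.i.d.\ vectors of block order statistics and identify $\bar{\pi}_{j,l}$ via the Dirichlet law of the gaps between the $F\left(X_{\left(K_{j}:N\right)}\right)$, which is exactly your reduction. You simply spell out the bookkeeping (marginal laws $B_{K_{j},N}\circ F$, transfer of assumption (\ref{eq:AssumptionCOI}), the form of $\mathbf{D}$ and of the diagonal $\nicefrac{1}{4}$) in more detail, and you correctly note that the result remains conditional on the open Conjecture \ref{conj:compRemedian}.
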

\begin{proof}
Apply Conjecture \ref{conj:compRemedian} noting that the gaps between
the $F\left(X_{\left(K_{j}:N\right)}\right)$ are $\mathrm{Dirichlet}\left(K_{1},K_{2}-K_{1},K_{3}-K_{2},\ldots,K_{\ell}-K_{\ell-1},N+1-K_{\ell}\right)$.
\end{proof}
\begin{figure}
\hfill{}\subfloat{\includegraphics[scale=0.5]{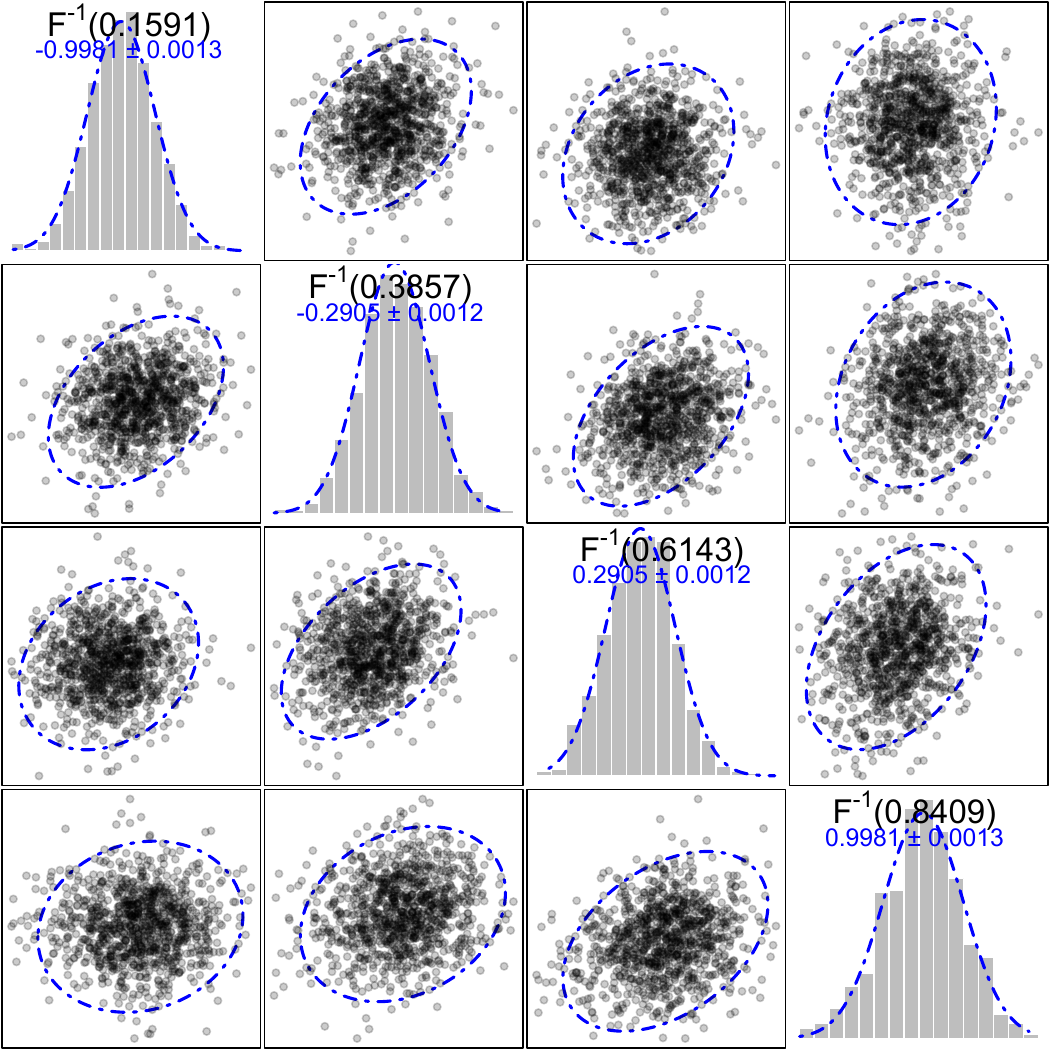}}\hfill{}

\hfill{}\subfloat{\includegraphics[scale=0.5]{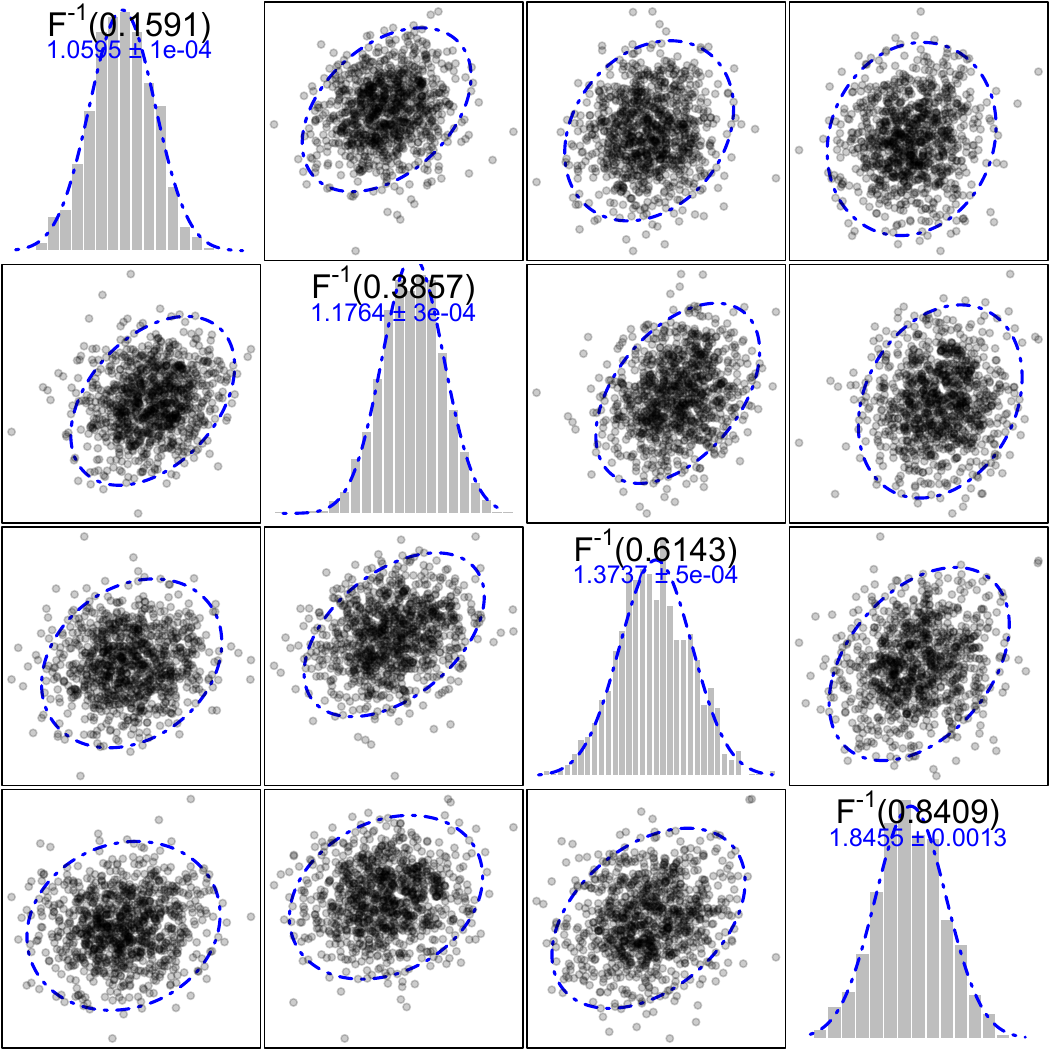}}\hfill{}

\caption{Conjecture \ref{conj:quantRemedian} pairs ($k=3$, $b=101$, $N=4$,
$\mathbf{K}=\left(1,2,3,4\right)$). Figure \ref{fig:ThmMultiNormal}'s
caption describes the curves, the text, and the top and bottom panels.}

\label{fig:OtherQuant}
\end{figure}

While organizations often wish to track $\ell\geq2$ quantiles (\citet{R87,R90,MBLL07,CJLVW06}),
Conjecture \ref{conj:quantRemedian} proposes a distribution for the
large-$b$, remedian-based estimate of $\ell\geq2$ quantiles. While
Conjecture \ref{conj:compRemedian} implies Conjecture \ref{conj:quantRemedian}
and Figure \ref{fig:OtherQuant} simulates quantities from Conjecture
\ref{conj:quantRemedian}, the results in Figure \ref{fig:OtherQuant}
support both conjectures. It is worth noting that an implementation
of Conjecture \ref{conj:quantRemedian} in the streaming setting requires
$N+\ell kb=\mathcal{O}\left(\sqrt[k]{n}\right)$ space, as $b\rightarrow\infty$.
Finally, the $\ell$-remedian of Conjecture \ref{conj:quantRemedian}
has breakdown point $\min_{1\leq j\leq\ell}\epsilon_{K_{j}:N;R_{k,b}:b^{k}}^{*}$
(\emph{cf.}\ (\ref{eq:bdp-distn})).

\section{Conclusions and Discussion \label{sec:Conclusions-and-Discussion}}

Our paper adds to our understanding of the remedian, a low-memory
estimator of $F^{-1}\left(p\right)$. As $b\rightarrow\infty$ (as
$k\rightarrow\infty$) the remedian uses $\mathcal{O}\left(\sqrt[k]{n}\right)$
space ($\mathcal{O}\left(\log n\right)$ space) and 100$\left(1-2^{1-k}\right)$\%
(100\%) of its central inputs (\S\ref{subsec:Breakdown-Point}). With
$b\rightarrow\infty$, we derive the distribution of the standardized
(mean, median, remedian, remedian rank) vector (\S\ref{subsec:Joint-Asymptotic-Normality});
compare the efficiencies of the mean, median, and remedian (\S\ref{subsec:Asymptotic-Relative-Efficiency});
and propose a distribution for the remedian estimate of $\ell\geq2$
population quantiles, proving the case for $\ell=1$ (\S\ref{subsec:Any-Quantile}).
We contextualize the sample median, \emph{i.e.}, the remedian with
$k=1$. As $k$ increases, estimators become more variable, less efficient,
and less robust.

The remedian presents an interesting setting in which to study how
memory requirements trade off against robustness, efficiency, and
precision. Where---one might ask---does the remedian fit into the
panoply of techniques for streaming quantile estimation (Table \ref{tab:literature})?
The remedian is distinguished by its robustness and our understanding
of how it interacts with population and sample quantities (\emph{e.g.},
Theorem \ref{thm:rem_med_to_quad_normal} and Corollary \ref{cor:locDiffs}).
The large-$b$ remedian, with its non-zero breakdown point and relatively
large size, is the Cadillac of quantile estimators: big and roomy,
but expensive. An organization might use the $\ell$-remedian (\S\ref{subsec:Any-Quantile})
to track important, but potentially-corrupted, data streams.

\appendix

\section{Proof of Theorem \ref{thm:rem_to_normal} \label{sec:CC05-Arg}}

\citet{CC05} unwittingly show that the standardized remedian converges
to normality as $b\rightarrow\infty$ and $k$ remains fixed. The
statement they prove sends both $b$ and $k$ to infinity, but their
proof works when just $b\rightarrow\infty$. We present a slightly
modified version of their proof, starting with the following four
facts and three lemmas, which appear in \citet{CC05}. In what follows
let 
\begin{equation}
\theta_{b}\coloneqq\dot{\Psi}_{b}^{\left(1\right)}\left(\nicefrac{1}{2}\right)=\frac{b!}{2^{2m}m!^{2}}\label{eq:def-beta-b}
\end{equation}
be the height of the $\mathrm{Beta}\left(m+1,m+1\right)$ density
at $x=\nicefrac{1}{2}$ for $b\coloneqq2m+1$ and $m\geq1$.
\begin{fact}
\label{fact:fixed-points}$\Psi_{b}^{\left(k\right)}$ has fixed points
at $0,\nicefrac{1}{2},1$, \emph{i.e.}, $\Psi_{b}^{\left(k\right)}\left(x_{0}\right)=x_{0}$,
for $x_{0}=0,\nicefrac{1}{2},1$.
\end{fact}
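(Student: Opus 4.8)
The plan is to prove the three fixed points for $\Psi_b^{\left(1\right)}$ directly and then lift the claim to $\Psi_b^{\left(k\right)}$ by a one-line induction on $k$. Recall from (\ref{eq:Psi(1)}) that $\Psi_b^{\left(1\right)}$ is the CDF of a $\mathrm{Beta}\left(m+1,m+1\right)$ random variable, a law supported on $\left[0,1\right]$; hence $\Psi_b^{\left(1\right)}\left(0\right)=0$ and $\Psi_b^{\left(1\right)}\left(1\right)=1$ are immediate from the definition of a CDF. For the middle point I would exploit the symmetry of the integrand in (\ref{eq:Psi(1)}): under $y\mapsto 1-y$ the quantity $y\left(1-y\right)$ is invariant, so $\int_0^{1/2}\left[y\left(1-y\right)\right]^m dy=\int_{1/2}^1\left[y\left(1-y\right)\right]^m dy=\tfrac12\int_0^1\left[y\left(1-y\right)\right]^m dy=\tfrac12 B\left(m+1,m+1\right)$, and $B\left(m+1,m+1\right)=\Gamma\left(m+1\right)^2/\Gamma\left(2m+2\right)=m!^2/\left(2m+1\right)!=m!^2/b!$. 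Substituting into (\ref{eq:Psi(1)}) gives $\Psi_b^{\left(1\right)}\left(\nicefrac12\right)=\tfrac{b!}{m!^2}\cdot\tfrac12\cdot\tfrac{m!^2}{b!}=\nicefrac12$. (Equivalently: $\mathrm{Beta}\left(m+1,m+1\right)$ is symmetric about $\nicefrac12$.)

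For the recursion, fix $x_0\in\left\{0,\nicefrac12,1\right\}$. The base case $\Psi_b^{\left(0\right)}\left(x_0\right)=x_0$ holds by (\ref{eq:recursive}). Assuming inductively that $\Psi_b^{\left(k-1\right)}\left(x_0\right)=x_0$, (\ref{eq:recursive}) yields $\Psi_b^{\left(k\right)}\left(x_0\right)=\Psi_b^{\left(1\right)}\!\left(\Psi_b^{\left(k-1\right)}\left(x_0\right)\right)=\Psi_b^{\left(1\right)}\left(x_0\right)=x_0$ by the previous paragraph, closing the induction and proving the fact for every $k\geq1$.

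I do not expect a genuine obstacle here; the only points requiring care are the bookkeeping in the Beta-function identity $B\left(m+1,m+1\right)=m!^2/b!$ and the observation that it is precisely the change of variables $y\mapsto1-y$ that pins the interior fixed point at $\nicefrac12$ rather than at some other value. This also foreshadows why the subsequent analysis linearizes $\Psi_b^{\left(1\right)}$ about its (unstable) interior fixed point $\nicefrac12$, with slope $\theta_b$ as in (\ref{eq:def-beta-b}).
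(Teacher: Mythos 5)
Your proposal is correct and matches the paper's proof: both establish $\Psi_{b}^{\left(1\right)}\left(x_{0}\right)=x_{0}$ for $x_{0}\in\left\{0,\nicefrac{1}{2},1\right\}$ (the paper simply asserts this, while you verify the symmetry computation explicitly) and then lift the claim to general $k$ by the same induction via (\ref{eq:recursive}). No issues.
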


\begin{proof}
Let $x_{0}\in\left\{ 0,\nicefrac{1}{2},1\right\} $. Note that $\Psi_{b}^{\left(1\right)}\left(x_{0}\right)=x_{0}$.
Then, by induction on $k\geq1$, we have $\Psi_{b}^{\left(k+1\right)}\left(x_{0}\right)=\Psi_{b}^{\left(1\right)}\left(\Psi_{b}^{\left(k\right)}\left(x_{0}\right)\right)=\Psi_{b}^{\left(1\right)}\left(x_{0}\right)=x_{0}$.
\end{proof}
\begin{fact}
\label{fact:monotonic}If $0\leq\alpha<\beta\leq1$, then $0\leq\Psi_{b}^{\left(k\right)}\left(\alpha\right)<\Psi_{b}^{\left(k\right)}\left(\beta\right)\leq1$.
\end{fact}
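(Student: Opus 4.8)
The plan is to prove, by induction on $k\geq1$, the slightly stronger statement that each $\Psi_b^{(k)}$ maps $[0,1]$ into $[0,1]$ and is \emph{strictly} increasing there, i.e.\ $0\leq\alpha<\beta\leq1$ implies $0\leq\Psi_b^{(k)}(\alpha)<\Psi_b^{(k)}(\beta)\leq1$. The whole argument rests on one observation about the base map: by (\ref{eq:Psi(1)}), $\Psi_b^{(1)}$ is the $\mathrm{Beta}(m+1,m+1)$ CDF, so it fixes $0$ and $1$, is non-decreasing on $[0,1]$, and has density $\frac{b!}{m!^{2}}\left[y(1-y)\right]^{m}$, which is continuous and nonnegative on $[0,1]$ and strictly positive on $(0,1)$.

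First I would dispose of the base case $k=1$. For $0\leq\alpha<\beta\leq1$,
\[
\Psi_b^{(1)}(\beta)-\Psi_b^{(1)}(\alpha)=\frac{b!}{m!^{2}}\int_{\alpha}^{\beta}\left[y(1-y)\right]^{m}\,dy>0,
\]
because the integrand is strictly positive on the open interval $(\alpha,\beta)\subseteq(0,1)$, which has positive length; monotonicity gives in addition $0=\Psi_b^{(1)}(0)\leq\Psi_b^{(1)}(\alpha)$ and $\Psi_b^{(1)}(\beta)\leq\Psi_b^{(1)}(1)=1$. For the inductive step, assume the claim for $k-1$. Given $0\leq\alpha<\beta\leq1$, set $\alpha'\coloneqq\Psi_b^{(k-1)}(\alpha)$ and $\beta'\coloneqq\Psi_b^{(k-1)}(\beta)$; the inductive hypothesis yields $0\leq\alpha'<\beta'\leq1$, and the $k=1$ case applied to $\alpha',\beta'$ gives $0\leq\Psi_b^{(1)}(\alpha')<\Psi_b^{(1)}(\beta')\leq1$. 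By the recursion (\ref{eq:recursive}) this is exactly $0\leq\Psi_b^{(k)}(\alpha)<\Psi_b^{(k)}(\beta)\leq1$, closing the induction.

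I do not anticipate a real obstacle; the only point that deserves care is strictness at the endpoints. The images $\alpha'$, $\beta'$ may land exactly on $0$ or $1$ --- indeed $0$ and $1$ are fixed points of every $\Psi_b^{(k)}$ (Fact \ref{fact:fixed-points}) --- but this causes no degeneracy, since the base-case argument establishes strict monotonicity on the \emph{closed} interval $[0,1]$, not merely on $(0,1)$. (An alternative is to observe that $\Psi_b^{(k)}\circ F$ is the CDF of the remedian $X_{(R_{k,b})}$, hence non-decreasing, with strictness following from the positivity of each intermediate Beta-median density on an open interval; the direct induction is cleaner.)
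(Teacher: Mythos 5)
Your proof is correct and follows essentially the same route as the paper's: establish strict monotonicity of $\Psi_b^{(1)}$ on $[0,1]$ from the strictly positive Beta density on $(0,1)$, then induct via the recursion (\ref{eq:recursive}). Your explicit handling of the endpoint case (where the paper instead invokes Fact \ref{fact:fixed-points}) is a minor presentational difference, not a different argument.
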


\begin{proof}
This is true for $\Psi_{b}^{\left(1\right)}\left(x\right)=\Pr\left(\mathrm{Beta}\left(m+1,m+1\right)\leq x\right)$.
Induction and Fact \ref{fact:fixed-points} yield the result.
\end{proof}
\begin{fact}
\label{fact:deriv_1-2}For any $x\in\left[0,1\right]$, 
\begin{align}
\dot{\Psi}_{b}^{\left(k\right)}\left(x\right) & \leq\dot{\Psi}_{b}^{\left(k\right)}\left(\nicefrac{1}{2}\right)=\theta_{b}^{k}\label{eq:D1Rbkinequality}\\
\left|\ddot{\Psi}_{b}^{\left(1\right)}\left(x\right)\right| & \leq6m\theta_{b-2}\left|1-2x\right|.\label{eq:D2Rb1inequality}
\end{align}
\end{fact}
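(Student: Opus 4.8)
The plan is to handle the two bounds separately, in each case exploiting the closed form of the $\mathrm{Beta}(m+1,m+1)$ density underlying $\Psi_b^{(1)}$ together with the chain rule for the iterates (\ref{eq:recursive}).

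For (\ref{eq:D1Rbkinequality}), I would first read off from (\ref{eq:Psi(1)}) that $\dot\Psi_b^{(1)}(x)=\frac{b!}{m!^2}[x(1-x)]^m$. Since $x\mapsto x(1-x)$ increases on $[0,\nicefrac{1}{2}]$ and decreases on $[\nicefrac{1}{2},1]$, this density is maximized on $[0,1]$ at $x=\nicefrac{1}{2}$, where it equals $\frac{b!}{4^m m!^2}=\theta_b$ by (\ref{eq:def-beta-b}). The chain rule applied to (\ref{eq:recursive}) gives $\dot\Psi_b^{(k)}(x)=\prod_{i=0}^{k-1}\dot\Psi_b^{(1)}\bigl(\Psi_b^{(i)}(x)\bigr)$; bounding each of the $k$ factors by $\theta_b$ yields $\dot\Psi_b^{(k)}(x)\le\theta_b^k$, and evaluating at $x=\nicefrac{1}{2}$---where $\Psi_b^{(i)}(\nicefrac{1}{2})=\nicefrac{1}{2}$ for all $i$ by Fact \ref{fact:fixed-points}---turns every factor into $\theta_b$, giving the stated equality $\dot\Psi_b^{(k)}(\nicefrac{1}{2})=\theta_b^k$.

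For (\ref{eq:D2Rb1inequality}) I would differentiate once more: $\ddot\Psi_b^{(1)}(x)=\frac{b!}{m!^2}\,m\,[x(1-x)]^{m-1}(1-2x)$, so $\bigl|\ddot\Psi_b^{(1)}(x)\bigr|=\frac{b!\,m}{m!^2}[x(1-x)]^{m-1}\,|1-2x|$. Using $x(1-x)\le\nicefrac{1}{4}$ to bound $[x(1-x)]^{m-1}\le 4^{-(m-1)}$, it suffices to verify $\frac{b!}{4^{m-1}m!^2}\le 6\,\theta_{b-2}$. Since $b-2=2(m-1)+1$, formula (\ref{eq:def-beta-b}) gives $\theta_{b-2}=\frac{(b-2)!}{4^{m-1}(m-1)!^2}$, so the factor $4^{m-1}$ cancels and the inequality reduces to $\frac{b!}{m!^2}\le 6\,\frac{(b-2)!}{(m-1)!^2}$, that is, $\frac{b(b-1)}{m^2}=\frac{2(2m+1)}{m}=4+\frac{2}{m}\le 6$, which holds for every $m\ge1$.

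The argument is elementary, so there is no real obstacle; the only points needing care are the parameter bookkeeping relating $\theta_{b-2}$ to the index $m-1$ (one uses $b-2=2(m-1)+1$, reading $\theta_1=1$ as the $\mathrm{Beta}(1,1)$ density at $\nicefrac{1}{2}$ in the base case $m=1$), and the observation that the constant $6$ is exactly $\sup_{m\ge1}\bigl(4+\tfrac{2}{m}\bigr)$, attained at $m=1$ (so $b=3$), whence the bound is tight.
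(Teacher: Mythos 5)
Your proposal is correct and follows essentially the same route as the paper: the first bound via the chain rule applied to (\ref{eq:recursive}) with each factor $\dot{\Psi}_{b}^{(1)}$ maximized at $\nicefrac{1}{2}$ (the paper phrases this as an induction, you as an unrolled product), and the second via the explicit form of $\ddot{\Psi}_{b}^{(1)}$ together with $\nicefrac{b(b-1)}{m^{2}}=4+\nicefrac{2}{m}\leq6$. The only cosmetic difference is that the paper recognizes $[x(1-x)]^{m-1}$ as proportional to $\dot{\Psi}_{b-2}^{(1)}(x)$ and reuses (\ref{eq:D1Rbkinequality}), whereas you bound it by $4^{-(m-1)}$ and expand $\theta_{b-2}$ directly---the same computation, and your remark on the $m=1$ edge case ($\theta_{1}=1$) is a welcome extra.
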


\begin{proof}
First, it is clear that $\dot{\Psi}_{b}^{\left(1\right)}\left(x\right)\leq\dot{\Psi}_{b}^{\left(1\right)}\left(\nicefrac{1}{2}\right)$,
for all $x\in\left[0,1\right]$. Then, by induction on $k\geq2$,
we note that 
\begin{align}
\dot{\Psi}_{b}^{\left(k\right)}\left(x\right) & =\frac{d}{dx}\left[\Psi_{b}^{\left(1\right)}\left(\Psi_{b}^{\left(k-1\right)}\left(x\right)\right)\right]\\
 & =\dot{\Psi}_{b}^{\left(1\right)}\left(\Psi_{b}^{\left(k-1\right)}\left(x\right)\right)\dot{\Psi}_{b}^{\left(k-1\right)}\left(x\right)\\
 & \leq\dot{\Psi}_{b}^{\left(1\right)}\left(\nicefrac{1}{2}\right)\dot{\Psi}_{b}^{\left(k-1\right)}\left(\nicefrac{1}{2}\right)\label{eq:inequality}\\
 & =\dot{\Psi}_{b}^{\left(1\right)}\left(\Psi_{b}^{\left(k-1\right)}\left(\nicefrac{1}{2}\right)\right)\dot{\Psi}_{b}^{\left(k-1\right)}\left(\nicefrac{1}{2}\right)\label{eq:useFact}\\
 & =\dot{\Psi}_{b}^{\left(k\right)}\left(\nicefrac{1}{2}\right),
\end{align}
where (\ref{eq:inequality}) uses the base case and the induction
hypothesis and (\ref{eq:useFact}) uses Fact \ref{fact:fixed-points}.

Induction also shows that $\dot{\Psi}_{b}^{\left(k\right)}\left(\frac{1}{2}\right)=\theta_{b}^{k}$.
Definition (\ref{eq:def-beta-b}) gives the base case. For $k\geq2$
note that 
\begin{align*}
\dot{\Psi}_{b}^{\left(k\right)}\left(\nicefrac{1}{2}\right) & =\frac{d}{dx}\left[\Psi_{b}^{\left(1\right)}\left(\Psi_{b}^{\left(k-1\right)}\left(\nicefrac{1}{2}\right)\right)\right]\\
 & =\dot{\Psi}_{b}^{\left(1\right)}\left(\Psi_{b}^{\left(k-1\right)}\left(\nicefrac{1}{2}\right)\right)\dot{\Psi}_{b}^{\left(k-1\right)}\left(\nicefrac{1}{2}\right)\\
 & =\dot{\Psi}_{b}^{\left(1\right)}\left(\nicefrac{1}{2}\right)\dot{\Psi}_{b}^{\left(k-1\right)}\left(\nicefrac{1}{2}\right)\\
 & =\theta_{b}\theta_{b}^{k-1}=\theta_{b}^{k}.
\end{align*}

Finally, to obtain inequality (\ref{eq:D2Rb1inequality}), we note
that 
\begin{align*}
\left|\ddot{\Psi}_{b}^{\left(1\right)}\left(x\right)\right| & =\frac{b!}{m!^{2}}mx^{m-1}\left(1-x\right)^{m-1}\left|1-2x\right|\\
 & =\frac{b\left(b-1\right)}{m^{2}}m\dot{\Psi}_{b-2}^{\left(1\right)}\left(x\right)\left|1-2x\right|\\
 & \leq6m\theta_{b-2}\left|1-2x\right|,
\end{align*}
where the last inequality uses $b\left(b-1\right)m^{-2}=2\left(2+\nicefrac{1}{m}\right)$
and (\ref{eq:D1Rbkinequality}).
\end{proof}
\begin{fact}
\label{fact:beta_b}$\theta_{b}=\sqrt{\frac{2b}{\pi}}\left(1+o\left(1\right)\right)$
as $b\rightarrow\infty$.
\end{fact}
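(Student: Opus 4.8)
The plan is to reduce $\theta_b$ to the central binomial coefficient and apply Stirling's formula. Writing $b=2m+1$, I would first note that $\theta_b=\frac{(2m+1)!}{2^{2m}(m!)^2}=(2m+1)\,\frac{1}{4^m}\binom{2m}{m}$, since $2^{2m}=4^m$. The classical estimate $\binom{2m}{m}=\frac{4^m}{\sqrt{\pi m}}(1+o(1))$ --- a standard consequence of Stirling's formula $n!=\sqrt{2\pi n}\,(n/e)^n(1+o(1))$ --- then gives $\theta_b=\frac{2m+1}{\sqrt{\pi m}}(1+o(1))$.

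Next I would substitute $m=(b-1)/2$, so that $\sqrt{\pi m}=\sqrt{\pi(b-1)/2}$ and $\theta_b=\frac{b}{\sqrt{\pi(b-1)/2}}(1+o(1))=\sqrt{\frac{2b^2}{\pi(b-1)}}(1+o(1))$. Since $\frac{b^2}{b-1}=b\cdot\frac{1}{1-1/b}=b(1+o(1))$ as $b\to\infty$, this yields $\theta_b=\sqrt{\frac{2b}{\pi}}(1+o(1))$, as claimed. If one prefers to avoid quoting the central-binomial asymptotic, one can instead expand Stirling directly in $(2m+1)!$ and $(m!)^2$; the only non-cosmetic step is then recognizing $\big(1+\tfrac{1}{2m}\big)^{2m}\to e$, which cancels the stray $e^{\pm 1}$ factor left over from the ratio.

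There is essentially no obstacle here: the argument is pure bookkeeping. The one point requiring a little care is confirming that the Stirling error terms enter multiplicatively as $1+o(1)$, so that they combine without interaction, and that the substitution $m=(b-1)/2$ preserves a remainder of the form $1+o(1)$. As a sanity check on the constant, $\theta_b$ is the $\mathrm{Beta}(m+1,m+1)$ density evaluated at its mean $\nicefrac{1}{2}$, and that distribution has variance $\frac{1}{4(2m+3)}$; the Gaussian approximation to the density at its mean is $\big(2\pi/(4(2m+3))\big)^{-1/2}=\sqrt{2(2m+3)/\pi}\sim\sqrt{2b/\pi}$, in agreement with the Stirling computation.
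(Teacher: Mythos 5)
Your argument is correct and is essentially the paper's: both rest on Stirling's formula, the paper expanding $(2m+1)!/m!^2$ directly (using $\left(\tfrac{2m+1}{2m}\right)^{2m}\to e$ to cancel the stray exponential, exactly the step you flag in your alternative), while you route the same computation through the standard central-binomial asymptotic $\binom{2m}{m}\sim 4^m/\sqrt{\pi m}$. The substitution $m=(b-1)/2$ and the final $1+o(1)$ bookkeeping are handled correctly, and your variance-based sanity check on the constant is a nice touch.
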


\begin{proof}
Using Stirling's approximation and $\lim_{m\rightarrow\infty}\left(\frac{2m+1}{2m}\right)^{2m}=e$,
we have
\[
\frac{\left(2m+1\right)!}{m!^{2}2^{2m}}\sim\sqrt{\frac{2m+1}{2\pi}}\left(\frac{2m+1}{m}\right)\left(\frac{1}{e}\right)\left(\frac{2m+1}{2m}\right)^{2m}\sim\sqrt{\frac{2\left(2m+1\right)}{\pi}}.
\]
\end{proof}
\begin{lem}
\label{lem:bound2ndDeriv}For some generic $C>0$ and $\tau$ such
that $\left|\tau-\nicefrac{1}{2}\right|=\mathcal{O}\left(\frac{1}{\sqrt{b}\theta_{b}^{k-1}}\right)$,
\[
\left|\ddot{\Psi}_{b}^{\left(k-1\right)}\left(\tau\right)\right|\leq C\theta_{b}^{2k-3}.
\]
\end{lem}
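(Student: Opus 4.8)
Here is how I would approach Lemma \ref{lem:bound2ndDeriv}. The plan is to bound $\ddot{\Psi}_{b}^{(j)}(\tau)$ by induction on $j$, establishing that $|\ddot{\Psi}_{b}^{(j)}(\tau)|\le C\theta_{b}^{3j-k}$ for every $1\le j\le k-1$ and all $b$ large; the case $j=k-1$ has $3j-k=2k-3$ and is exactly the lemma. The engine is the second-derivative chain rule applied to the recursion $\Psi_{b}^{(j)}=\Psi_{b}^{(1)}\circ\Psi_{b}^{(j-1)}$ from (\ref{eq:recursive}):
\begin{equation*}
\ddot{\Psi}_{b}^{(j)}(\tau)=\ddot{\Psi}_{b}^{(1)}\!\big(\Psi_{b}^{(j-1)}(\tau)\big)\big[\dot{\Psi}_{b}^{(j-1)}(\tau)\big]^{2}+\dot{\Psi}_{b}^{(1)}\!\big(\Psi_{b}^{(j-1)}(\tau)\big)\,\ddot{\Psi}_{b}^{(j-1)}(\tau),
\end{equation*}
which, writing $M_{j}\coloneqq|\ddot{\Psi}_{b}^{(j)}(\tau)|$ and noting $M_{0}=0$ since $\Psi_{b}^{(0)}(x)=x$, starts the induction for free at $j=1$.

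The first substantive step is to estimate how far $\Psi_{b}^{(j-1)}(\tau)$ sits from $\nicefrac{1}{2}$. Since $\nicefrac{1}{2}$ is a fixed point of every $\Psi_{b}^{(j-1)}$ (Fact \ref{fact:fixed-points}), the mean value theorem together with the global bound $\dot{\Psi}_{b}^{(j-1)}\le\theta_{b}^{j-1}$ from (\ref{eq:D1Rbkinequality}) gives $|\Psi_{b}^{(j-1)}(\tau)-\nicefrac{1}{2}|\le\theta_{b}^{j-1}|\tau-\nicefrac{1}{2}|=\mathcal{O}\big(1/(\sqrt{b}\,\theta_{b}^{k-j})\big)$. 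Feeding this into the bound (\ref{eq:D2Rb1inequality}) on $|\ddot{\Psi}_{b}^{(1)}|$, and using $m\,\theta_{b-2}=\mathcal{O}(b^{3/2})=\mathcal{O}(\theta_{b}^{3})$ and $1/\sqrt{b}=\mathcal{O}(1/\theta_{b})$ — both consequences of $\theta_{b}\sim\sqrt{2b/\pi}$ (Fact \ref{fact:beta_b}) — yields $|\ddot{\Psi}_{b}^{(1)}(\Psi_{b}^{(j-1)}(\tau))|=\mathcal{O}(\theta_{b}^{j+2-k})$. Combining this with $[\dot{\Psi}_{b}^{(j-1)}(\tau)]^{2}\le\theta_{b}^{2(j-1)}$ and with the pointwise bound $\dot{\Psi}_{b}^{(1)}\le\theta_{b}$ (again (\ref{eq:D1Rbkinequality})) collapses the chain-rule identity to the scalar recursion $M_{j}\le C\theta_{b}^{3j-k}+\theta_{b}M_{j-1}$.

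To close the induction: if $M_{j-1}\le C\theta_{b}^{3(j-1)-k}$, then $\theta_{b}M_{j-1}\le C\theta_{b}^{3j-k-2}$, which for $b$ large is dominated by the inhomogeneous term $C\theta_{b}^{3j-k}$, so $M_{j}\le C'\theta_{b}^{3j-k}$ with the constant merely doubling; since the recursion is iterated only $k-1=\mathcal{O}(1)$ times the constant stays bounded, and putting $j=k-1$ finishes. The only real care needed is the bookkeeping of powers of $\theta_{b}$ — in particular the conversions $\theta_{b}\asymp\sqrt{b}$ and $m\theta_{b-2}\asymp\theta_{b}^{3}$ coming from Fact \ref{fact:beta_b} — and checking that the single $\mathcal{O}$-constant hidden in the hypothesis $|\tau-\nicefrac{1}{2}|=\mathcal{O}\big(1/(\sqrt{b}\theta_{b}^{k-1})\big)$ propagates uniformly through all $k-1$ levels; because $k$ is held fixed while $b\to\infty$, this is automatic and there is no genuine obstacle.
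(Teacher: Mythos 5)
Your proof is correct and follows essentially the same route as the paper's: the second\-/derivative chain rule applied to $\Psi_b^{(j)}=\Psi_b^{(1)}\circ\Psi_b^{(j-1)}$, the fixed point at $\nicefrac{1}{2}$ plus the mean value theorem to control $|\Psi_b^{(j-1)}(\tau)-\nicefrac{1}{2}|$, the bounds (\ref{eq:D1Rbkinequality})--(\ref{eq:D2Rb1inequality}), and Fact \ref{fact:beta_b} to convert powers of $b$ into powers of $\theta_b$. The only difference is bookkeeping: you induct upward on $j$ with $\tau$ fixed, obtaining the (slightly sharper) intermediate bounds $C\theta_b^{3j-k}$, whereas the paper inducts on $k$ using the lemma's own statement at level $k-1$; both coincide at $j=k-1$ with exponent $2k-3$.
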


\begin{proof}
We proceed by induction, proving two base cases. 
\begin{description}
\item [{Case\ \textit{k}\,=\,1}] Expression (\ref{eq:recursive}) gives
$\ddot{\Psi}_{b}^{\left(0\right)}\left(x\right)\equiv0$, so that
$\left|\ddot{\Psi}_{b}^{\left(0\right)}\left(\tau\right)\right|\leq C\theta_{b}^{-1}$. 
\item [{Case\ \textit{k}\,=\,2}] Facts \ref{fact:deriv_1-2}--\ref{fact:beta_b}
give $\left|\ddot{\Psi}_{b}^{\left(1\right)}\left(\tau\right)\right|\leq6m\theta_{b-2}\left|1-2\tau\right|\leq\frac{Cm\theta_{b-2}}{\sqrt{b}\theta_{b}}\leq C\theta_{b}$.
\item [{Induction\ on\ \textit{k}\,\ensuremath{\ge}\,2}] With $\tau^{*}$
between $\nicefrac{1}{2}$ and $\tau$, Taylor expansion gives
\begin{align}
\Psi_{b}^{\left(k-2\right)}\left(\tau\right) & =\Psi_{b}^{\left(k-2\right)}\left(\nicefrac{1}{2}\right)+\dot{\Psi}_{b}^{\left(k-2\right)}\left(\tau^{*}\right)\left(\tau-\nicefrac{1}{2}\right)\\
 & =\nicefrac{1}{2}+\mathcal{O}\left(\nicefrac{1}{b}\right),\label{eq:taylor_Rb(k-2)}
\end{align}
where (\ref{eq:taylor_Rb(k-2)}) follows from Facts \ref{fact:fixed-points}
and \ref{fact:deriv_1-2}. Then $\left|\ddot{\Psi}_{b}^{\left(k-1\right)}\left(\tau\right)\right|$
equals
\begin{align}
 & \left|\ddot{\Psi}_{b}^{\left(1\right)}\left(\Psi_{b}^{\left(k-2\right)}\left(\tau\right)\right)\left[\dot{\Psi}_{b}^{\left(k-2\right)}\left(\tau\right)\right]^{2}+\dot{\Psi}_{b}^{\left(1\right)}\left(\Psi_{b}^{\left(k-2\right)}\left(\tau\right)\right)\ddot{\Psi}_{b}^{\left(k-2\right)}\left(\tau\right)\right|\label{eq:big_ineq_1}\\
 & \qquad\leq6m\theta_{b-2}\theta_{b}^{2k-4}\left|1-2\Psi_{b}^{\left(k-2\right)}\left(\tau\right)\right|+\theta_{b}\left|\ddot{\Psi}_{b}^{\left(k-2\right)}\left(\tau\right)\right|\label{eq:big_ineq_2}\\
 & \qquad\leq C\theta_{b}^{2k-3}+\theta_{b}\left|\ddot{\Psi}_{b}^{\left(k-2\right)}\left(\tau\right)\right|\label{eq:big_ineq_3}\\
 & \qquad\leq C\theta_{b}^{2k-3}+C\theta_{b}^{2k-4}\leq C\theta_{b}^{2k-3},\label{eq:big_ineq_4}
\end{align}
where (\ref{eq:big_ineq_2}) uses the triangle inequality and Fact
\ref{fact:deriv_1-2}, (\ref{eq:big_ineq_3}) uses expression (\ref{eq:taylor_Rb(k-2)})
and Fact \ref{fact:beta_b}, and (\ref{eq:big_ineq_4}) uses the induction
hypothesis and the generic definition of $C$.
\end{description}
\end{proof}
\begin{lem}
\label{lem:PsiRem}Let $U_{\left(i:k\right)}$ be the $i$th order
statistic of $U_{1},\ldots,U_{k}\stackrel{\mathrm{iid}}{\sim}\mathrm{Uniform}\left(0,1\right)$.
For $m,k\geq1$ and $b\coloneqq2m+1$, we have $\Psi_{b}^{\left(k-1\right)}\left(U_{\left(R_{k,b}:b^{k}\right)}\right)\stackrel{\mathscr{L}}{=}U_{\left(m+1:2m+1\right)}$.\footnote{The argument in \citet{CC05} uses, but does not prove, this statement.}
\end{lem}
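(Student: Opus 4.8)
The plan is to prove an almost-sure identity from which the stated equality in distribution follows at once. Take $U_1,U_2,\ldots,U_{b^k}\stackrel{\mathrm{iid}}{\sim}\mathrm{Uniform}(0,1)$, so that, specializing \S\ref{subsec:Remedian's-Distribution} to $F$ the identity, the level-$j$ remedian of any block of $b^j$ of these uniforms has CDF $\Psi_b^{(j)}$. First I would unpack the recursive bookkeeping of Algorithms \ref{algo:insert} and \ref{algo:query}: partitioning the $b^k$ inputs into the $b$ consecutive blocks $\mathcal{B}_1,\ldots,\mathcal{B}_b$ of size $b^{k-1}$ and writing $Y_j$ for the level-$(k-1)$ remedian of $\mathcal{B}_j$, the $b$ values that eventually populate row $k$ are, in order, the medians of the $b$ successive full states of row $k-1$, and the $j$th such state is exactly the level-$(k-1)$ remedian summary of $\mathcal{B}_j$. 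An induction on $k$ then gives
\[
U_{\left(R_{k,b}:b^k\right)}=\mathrm{median}\{Y_1,\ldots,Y_b\},
\]
the base case $k=1$ being $U_{(m+1:2m+1)}=\mathrm{median}\{U_1,\ldots,U_b\}$ via $\Psi_b^{(0)}(x)=x$. The $Y_1,\ldots,Y_b$ are independent (disjoint blocks) with common CDF $\Psi_b^{(k-1)}$.

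Next I would invoke the probability integral transform. By (\ref{eq:recursive}), $\Psi_b^{(k-1)}$ is a finite composition of Beta CDFs, hence continuous; by Fact \ref{fact:monotonic} it is strictly increasing on $[0,1]$; and by Fact \ref{fact:fixed-points} it maps $[0,1]$ onto $[0,1]$. Hence $V_j\coloneqq\Psi_b^{(k-1)}(Y_j)\sim\mathrm{Uniform}(0,1)$, and $V_1,\ldots,V_b$ are i.i.d.\ uniform. A strictly increasing map applied coordinatewise carries the $i$th order statistic of a list to the $i$th order statistic of the transformed list, so it commutes with the median of an odd-length list; therefore
\[
\Psi_b^{(k-1)}\bigl(U_{\left(R_{k,b}:b^k\right)}\bigr)=\Psi_b^{(k-1)}\bigl(\mathrm{median}\{Y_1,\ldots,Y_b\}\bigr)=\mathrm{median}\{V_1,\ldots,V_b\}=V_{(m+1:2m+1)},
\]
and the right-hand side is, by construction, distributed as $U_{(m+1:2m+1)}$, which is the claim.

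The hard part will be the very first step: making rigorous that the streaming remedian on $b^k$ inputs is literally the median of the $b$ level-$(k-1)$ block remedians, since this is the one place where the matrix bookkeeping of Algorithm \ref{algo:insert} must be tracked carefully (in particular that row $k-1$ is flushed exactly $b$ times, once per $b^{k-1}$ insertions, with its contents between flushes a fresh level-$(k-1)$ remedian summary). Everything after that is routine: the probability integral transform together with the order-preservation of strictly monotone maps.
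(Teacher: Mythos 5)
Your proposal is correct, and it reaches the conclusion by a somewhat different route than the paper. The paper proves the lemma by induction on $k$: the inductive step unwinds one level of the recursion (\ref{eq:recursive}), writes the $(k+1)$-level remedian as the median of $b$ i.i.d.\ $k$-level remedians, commutes $\Psi_{b}^{\left(k-1\right)}$ and then $\Psi_{b}^{\left(1\right)}$ past the median via Fact \ref{fact:monotonic}, invokes the induction hypothesis, and finishes with the probability integral transform applied to $\Psi_{b}^{\left(1\right)}$ of a $\mathrm{Beta}\left(m+1,m+1\right)$ variable. You instead work at a fixed $k$: you take the distributional recursion $Y_{j}\sim\Psi_{b}^{\left(k-1\right)}$ for the block remedians as already established (it is the content of \S\ref{subsec:Remedian's-Distribution}, and the paper itself invokes the same fact at step (\ref{eq:PsiRem-2})), commute $\Psi_{b}^{\left(k-1\right)}$ past the median once, and apply the probability integral transform once, at level $k-1$. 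The ingredients are identical---block decomposition, Fact \ref{fact:monotonic}, and the PIT---but your architecture replaces the induction on the lemma's statement with a single direct application of the PIT, which makes the argument shorter and yields an almost-sure identity on a common probability space rather than only an equality in law; the only induction you still need is the routine bookkeeping that the streaming algorithm's $k$-level output is literally the median of the $b$ block remedians, which the paper likewise absorbs into ``the remedian's definition.'' No gap.
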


\begin{proof}
The remedian's definition and (\ref{eq:recursive}) imply the base
case when $k=1$. Then, if the result holds for $k\geq1$, we have
$\Psi_{b}^{\left(k\right)}\left(U_{\left(R_{k+1,b}:b^{k+1}\right)}\right)$
\begin{align}
 & =\Psi_{b}^{\left(1\right)}\left(\Psi_{b}^{\left(k-1\right)}\left(U_{\left(R_{k+1,b}:b^{k+1}\right)}\right)\right)\label{eq:PsiRem-1}\\
 & \stackrel{\mathscr{L}}{=}\Psi_{b}^{\left(1\right)}\left(\Psi_{b}^{\left(k-1\right)}\left(\mathrm{med}\left(U_{\left(R_{k,b}:b^{k}\right)}^{\left(1\right)},\ldots,U_{\left(R_{k,b}:b^{k}\right)}^{\left(b\right)}\right)\right)\right)\label{eq:PsiRem-2}\\
 & =\Psi_{b}^{\left(1\right)}\left(\mathrm{med}\left(\Psi_{b}^{\left(k-1\right)}\left(U_{\left(R_{k,b}:b^{k}\right)}^{\left(1\right)}\right),\ldots,\Psi_{b}^{\left(k-1\right)}\left(U_{\left(R_{k,b}:b^{k}\right)}^{\left(b\right)}\right)\right)\right)\label{eq:PsiRem-3}\\
 & \stackrel{\mathscr{L}}{=}\Psi_{b}^{\left(1\right)}\left(\mathrm{med}\left(U_{\left(m+1:2m+1\right)}^{\left(1\right)},\ldots,U_{\left(m+1:2m+1\right)}^{\left(b\right)}\right)\right)\label{eq:PsiRem-4}\\
 & =\mathrm{med}\left(\Psi_{b}^{\left(1\right)}\left(U_{\left(m+1:2m+1\right)}^{\left(1\right)}\right),\ldots,\Psi_{b}^{\left(1\right)}\left(U_{\left(m+1:2m+1\right)}^{\left(b\right)}\right)\right)\label{eq:PsiRem-5}\\
 & \stackrel{\mathscr{L}}{=}\mathrm{med}\left(U_{1},\ldots,U_{b}\right),\label{eq:PsiRem-6}
\end{align}
where (\ref{eq:PsiRem-2}) uses the remedian's definition, $\mathrm{med}$
for the median operator, and 
\[
U_{\left(R_{k,b}:b^{k}\right)}^{\left(1\right)},\ldots,U_{\left(R_{k,b}:b^{k}\right)}^{\left(b\right)}\stackrel{\mathrm{iid}}{\sim}\Psi_{b}^{\left(k\right)};
\]
(\ref{eq:PsiRem-3}) and (\ref{eq:PsiRem-5}) use Fact \ref{fact:monotonic};
(\ref{eq:PsiRem-4}) uses the induction hypothesis and 
\[
U_{\left(m+1:2m+1\right)}^{\left(1\right)},\ldots,U_{\left(m+1:2m+1\right)}^{\left(b\right)}\stackrel{\mathrm{iid}}{\sim}\mathrm{Beta}\left(m+1,m+1\right)\equiv\Psi_{b}^{\left(1\right)};
\]
and (\ref{eq:PsiRem-6}) uses $X\sim F\textrm{ and }X\textrm{ continuous}\implies F\left(X\right)\sim\mathrm{Uniform}\left(0,1\right)$.
\end{proof}
\begin{lem}
\label{lem:medianOfUniform}For $m\geq1$ and $U_{1},U_{2},\ldots,U_{2m+1}\stackrel{\mathrm{iid}}{\sim}\mathrm{Uniform}\left(0,1\right)$,
we have
\[
\sup_{x\in\mathbb{R}}\left|\Pr\left(\sqrt{8\left(m+1\right)}\left(U_{\left(m+1\right)}-\nicefrac{1}{2}\right)\leq x\right)-\Phi\left(x\right)\right|\leq C\sqrt{\frac{2m+1}{m\left(m+1\right)}},
\]
for $U_{\left(r\right)}$ the $r$th order statistic, $\Phi\left(x\right)$
the standard normal CDF, and $C>0$.
\end{lem}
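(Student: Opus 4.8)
The plan is to realise $U_{\left(m+1\right)}$ as a ratio of independent gamma sums, which rewrites the event of interest as $\left\{W_{t}\leq0\right\}$ for an explicit sum of $m+1$ i.i.d.\ random variables, and then to read the bound off the classical Berry--Esseen theorem together with an elementary comparison of normal distribution functions. Since $U_{\left(m+1\right)}\sim\mathrm{Beta}\left(m+1,m+1\right)$, write $U_{\left(m+1\right)}\stackrel{\mathscr{L}}{=}S/\left(S+T\right)$ with $S\coloneqq\sum_{i=1}^{m+1}E_{i}$, $T\coloneqq\sum_{i=1}^{m+1}E_{i}'$, where $E_{1},\ldots,E_{m+1},E_{1}',\ldots,E_{m+1}'\stackrel{\mathrm{iid}}{\sim}\mathrm{Exp}\left(1\right)$. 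For every $t\in\mathbb{R}$, clearing the positive denominator $S+T$ shows that $\left\{U_{\left(m+1\right)}-\nicefrac{1}{2}\leq t\right\}$ coincides with $\left\{\left(1-2t\right)S-\left(1+2t\right)T\leq0\right\}=\left\{W_{t}\leq0\right\}$, where $W_{t}\coloneqq\sum_{i=1}^{m+1}Y_{i}$ and $Y_{i}\coloneqq\left(1-2t\right)E_{i}-\left(1+2t\right)E_{i}'$ are i.i.d.\ with $\mathbb{E}Y_{i}=-4t$ and $\mathrm{Var}\left(Y_{i}\right)=\left(1-2t\right)^{2}+\left(1+2t\right)^{2}$.

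First I would apply Berry--Esseen to $W_{t}$. By the $c_{r}$-inequality, $\mathbb{E}\left|Y_{i}-\mathbb{E}Y_{i}\right|^{3}\leq4\left(\left|1-2t\right|^{3}+\left|1+2t\right|^{3}\right)\mathbb{E}\left|E_{1}-1\right|^{3}$, and since $\left|a\right|^{3}+\left|b\right|^{3}\leq\left(a^{2}+b^{2}\right)^{3/2}$, the normalised third absolute moment $\mathbb{E}\left|Y_{i}-\mathbb{E}Y_{i}\right|^{3}/\mathrm{Var}\left(Y_{i}\right)^{3/2}$ is bounded by an absolute constant, \emph{uniformly} in $t\in\mathbb{R}$. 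Hence there is an absolute $C>0$ with
\[
\left|\Pr\left(W_{t}\leq0\right)-\Phi\!\left(\frac{4t\left(m+1\right)}{\sqrt{\left(m+1\right)\left(\left(1-2t\right)^{2}+\left(1+2t\right)^{2}\right)}}\right)\right|\leq\frac{C}{\sqrt{m+1}}\qquad\text{for all }t.
\]
Putting $t=x/\sqrt{8\left(m+1\right)}$, the argument of $\Phi$ simplifies to $g_{m}\left(x\right)\coloneqq x\left(1+\frac{x^{2}}{2\left(m+1\right)}\right)^{-1/2}$ whenever $\left|x\right|\leq\sqrt{2\left(m+1\right)}$; for $\left|x\right|\geq\sqrt{2\left(m+1\right)}$ one has $\left|t\right|\geq\nicefrac{1}{2}$, so the signs of $1-2t$ and $1+2t$ force $\left(1-2t\right)S-\left(1+2t\right)T$ to be of one sign a.s.\ and $\Pr\left(W_{t}\leq0\right)\in\left\{0,1\right\}$, whence the claim is immediate on that range because $1-\Phi\left(\sqrt{2\left(m+1\right)}\right)$ is super-exponentially small.

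It then remains to bound $\sup_{x}\left|\Phi\left(g_{m}\left(x\right)\right)-\Phi\left(x\right)\right|$, which I would do by splitting at $\left|x\right|=\left(m+1\right)^{1/6}$. On the inner range, $0\leq1-\left(1+u\right)^{-1/2}\leq u/2$ for $u\geq0$ gives $\left|g_{m}\left(x\right)-x\right|\leq\frac{\left|x\right|^{3}}{4\left(m+1\right)}$, hence $\left|\Phi\left(g_{m}\left(x\right)\right)-\Phi\left(x\right)\right|\leq\frac{\left|x\right|^{3}}{4\sqrt{2\pi}\left(m+1\right)}=\mathcal{O}\!\left(1/\sqrt{m+1}\right)$; on the outer range $\left(m+1\right)^{1/6}\leq\left|x\right|\leq\sqrt{2\left(m+1\right)}$, $\left|g_{m}\left(x\right)\right|\geq\left|x\right|/\sqrt{2}$ with the sign of $x$, so $\Phi\left(g_{m}\left(x\right)\right)$ and $\Phi\left(x\right)$ both lie within $\exp\!\left(-\left(m+1\right)^{1/3}/4\right)$ of their common endpoint $0$ or $1$. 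Collecting these with the $\left|x\right|\geq\sqrt{2\left(m+1\right)}$ case yields $\sup_{x}\left|\Pr\left(\sqrt{8\left(m+1\right)}\left(U_{\left(m+1\right)}-\nicefrac{1}{2}\right)\leq x\right)-\Phi\left(x\right)\right|\leq C'/\sqrt{m+1}$ for an absolute $C'$; since $1/\sqrt{m+1}\leq\sqrt{\left(2m+1\right)/\left(m\left(m+1\right)\right)}$ for $m\geq1$, this is the asserted bound, the statement being trivial for small $m$ since then its right-hand side exceeds $1$.

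The step I expect to be most delicate is the uniformity in $x$: the Gaussian approximation $g_{m}\left(x\right)$ is a genuinely negligible perturbation of $x$ only on a $\sqrt{\log m}$-sized window, so the moderate- and large-deviation ranges of $x$ cannot be absorbed by a global bound on $\left|g_{m}\left(x\right)-x\right|$ and must instead be handled through the tail estimates above. (If one prefers to cite the literature, a Berry--Esseen bound for central order statistics---e.g.\ \citet{E80} or \citet{R89}, with $n=2m+1$ and $r=m+1$---gives the conclusion directly.)
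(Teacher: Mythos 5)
Your argument is correct, but it takes a genuinely different route from the paper: the paper's entire proof of this lemma is a citation to Theorem 1 of \citet{E80} (or Theorem 4.2.1 of \citet{R89}), which are off-the-shelf Berry--Esseen bounds for central order statistics. You instead prove the bound from scratch via the gamma representation $U_{\left(m+1\right)}\stackrel{\mathscr{L}}{=}S/\left(S+T\right)$, the classical Berry--Esseen theorem for the sums $W_{t}$, and a careful comparison of $\Phi\left(g_{m}\left(x\right)\right)$ with $\Phi\left(x\right)$ split across the central, moderate, and extreme ranges of $x$. The details check out: the normalized third moment of $Y_{i}$ is bounded uniformly in $t$ exactly as you say (using $\left|a\right|^{3}+\left|b\right|^{3}\leq\left(a^{2}+b^{2}\right)^{3/2}$ and $\mathrm{Var}\left(Y_{i}\right)\geq2$), the substitution $t=x/\sqrt{8\left(m+1\right)}$ does produce $g_{m}\left(x\right)=x\left(1+x^{2}/\left(2\left(m+1\right)\right)\right)^{-1/2}$, the degenerate cases $\left|t\right|\geq\nicefrac{1}{2}$ are correctly dispatched, and $1/\sqrt{m+1}\leq\sqrt{\left(2m+1\right)/\left(m\left(m+1\right)\right)}$ closes the gap to the stated constant. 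What your approach buys is self-containedness and transparency about where the $\mathcal{O}\left(1/\sqrt{m}\right)$ rate comes from; it is also a quantitative strengthening of the same gamma-ratio device the paper already uses in Lemma \ref{lem:DirK2} to get the non-uniform CLT for Beta order statistics, so it fits the paper's toolkit naturally. What the citation buys is brevity and, in \citet{E80}'s case, an explicit numerical constant. Your closing remark correctly identifies the literature route the paper actually takes.
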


\begin{proof}
See Theorem 1 of \citet{E80} or Theorem 4.2.1 of \citet{R89}.
\end{proof}
\remToNormal*
\begin{proof}
By Fact \ref{fact:beta_b}, showing that $\lambda_{b}\left(X_{\left(R_{k,b}\right)}-\bar{\mu}\right)\stackrel[\infty]{b}{\Longrightarrow}\mathcal{N}\left(0,1\right)$
suffices, where 
\begin{equation}
\lambda_{b}\coloneqq\sqrt{8\left(m+1\right)}\theta_{b}^{k-1}f\left(\bar{\mu}\right)\stackrel[\infty]{b}{\sim}2b^{\nicefrac{k}{2}}f\left(\bar{\mu}\right)\left(\nicefrac{2}{\pi}\right)^{\frac{k-1}{2}}.\label{eq:def-lambda-b}
\end{equation}
We pursue the following in order to show that $\lambda_{b}\left(X_{\left(R_{k,b}\right)}-\bar{\mu}\right)\stackrel[\infty]{b}{\Longrightarrow}\mathcal{N}\left(0,1\right)$:
\begin{enumerate}
\item We derive three Taylor series expansions and a limit using the last
two;
\item From Lemma \ref{lem:medianOfUniform}, the limit, and the first Taylor
series we derive the result.
\end{enumerate}
We first derive three Taylor series expansions:
\begin{enumerate}
\item We expand $G_{k-1,b}\left(\nicefrac{x}{\lambda_{b}}+\bar{\mu}\right)$
about $\bar{\mu}$:
\begin{align}
G_{k-1,b}\left(\nicefrac{x}{\lambda_{b}}+\bar{\mu}\right) & =G_{k-1,b}\left(\bar{\mu}\right)+\nicefrac{x}{\lambda_{b}}\dot{G}_{k-1,b}\left(x_{1,b}\right)\label{eq:expand_G}\\
 & =\Psi_{b}^{\left(k-1\right)}\left(F\left(\bar{\mu}\right)\right)+\nicefrac{x}{\lambda_{b}}\dot{\Psi}_{b}^{\left(k-1\right)}\left(F\left(x_{1,b}\right)\right)f\left(x_{1,b}\right)\\
 & =\Psi_{b}^{\left(k-1\right)}\left(\nicefrac{1}{2}\right)+\nicefrac{x}{\lambda_{b}}\dot{\Psi}_{b}^{\left(k-1\right)}\left(F\left(x_{1,b}\right)\right)f\left(x_{1,b}\right)\\
 & =\nicefrac{1}{2}+\nicefrac{x}{\lambda_{b}}\dot{\Psi}_{b}^{\left(k-1\right)}\left(F\left(x_{1,b}\right)\right)f\left(x_{1,b}\right),\label{eq:usefixed}
\end{align}
where $x_{1,b}$ is between $\bar{\mu}$ and $\bar{\mu}+\nicefrac{x}{\lambda_{b}}$
and expression (\ref{eq:usefixed}) uses Fact \ref{fact:fixed-points}. 
\item We expand $F\left(x_{1,b}\right)$ about $\bar{\mu}$:
\begin{equation}
F\left(x_{1,b}\right)=F\left(\bar{\mu}\right)+\nicefrac{x}{\lambda_{b}}f\left(x_{2,b}\right)=\nicefrac{1}{2}+\nicefrac{x}{\lambda_{b}}f\left(x_{2,b}\right),\label{eq:expand_F}
\end{equation}
where $x_{2,b}$ is between $\bar{\mu}$ and $x_{1,b}$.
\item We expand $\dot{\Psi}_{b}^{\left(k-1\right)}\left(F\left(x_{1,b}\right)\right)$
about $\nicefrac{1}{2}$:
\begin{align}
\dot{\Psi}_{b}^{\left(k-1\right)}\left(F\left(x_{1,b}\right)\right) & =\dot{\Psi}_{b}^{\left(k-1\right)}\left(\nicefrac{1}{2}\right)+\nicefrac{x}{\lambda_{b}}f\left(x_{2,b}\right)\ddot{\Psi}_{b}^{\left(k-1\right)}\left(p_{b}\right)\label{eq:expand_Rdot_1}\\
 & =\theta_{b}^{k-1}+\nicefrac{x}{\lambda_{b}}f\left(x_{2,b}\right)\ddot{\Psi}_{b}^{\left(k-1\right)}\left(p_{b}\right),\label{eq:expand_Rdot_2}
\end{align}
where $p_{b}$ is between $\nicefrac{1}{2}$ and $F\left(x_{1,b}\right)$,
(\ref{eq:expand_Rdot_1}) uses expression (\ref{eq:expand_F}), and
(\ref{eq:expand_Rdot_2}) uses Fact \ref{fact:deriv_1-2}.
\end{enumerate}
We now derive the aforementioned limit. Using (\ref{eq:expand_Rdot_2})
and (\ref{eq:def-lambda-b}) we have
\begin{align}
\frac{\dot{\Psi}_{b}^{\left(k-1\right)}\left(F\left(x_{1,b}\right)\right)}{\theta_{b}^{k-1}} & =1+\frac{x}{\sqrt{8\left(m+1\right)}\theta_{b}^{2k-2}}\frac{f\left(x_{2,b}\right)}{f\left(\bar{\mu}\right)}\ddot{\Psi}_{b}^{\left(k-1\right)}\left(p_{b}\right)\\
 & =1+\frac{xf\left(x_{2,b}\right)}{f\left(\bar{\mu}\right)\mathcal{O}\left(\nicefrac{1}{b}\right)}\stackrel[\infty]{b}{\longrightarrow}1,\label{eq:bound_Rdotdot}
\end{align}
where (\ref{eq:bound_Rdotdot}) uses Lemma \ref{lem:bound2ndDeriv}
(which uses (\ref{eq:expand_F}) and (\ref{eq:def-lambda-b}) to show
that $\left|p_{b}-\nicefrac{1}{2}\right|=\mathcal{O}\left(\nicefrac{1}{\lambda_{b}}\right)=\mathcal{O}\left(\nicefrac{1}{\sqrt{b}\theta_{b}^{k-1}}\right)$)
and (\ref{eq:AssumptionCOI}).

Results (\ref{eq:expand_G}) through (\ref{eq:bound_Rdotdot}) in
place, we use Lemmas \ref{lem:PsiRem} and \ref{lem:medianOfUniform}
to complete the proof. Using Lemma \ref{lem:PsiRem}, we have 
\[
G_{k-1,b}\left(X_{\left(R_{k,b}\right)}\right)=\Psi_{b}^{\left(k-1\right)}\left(U_{\left(R_{k,b}\right)}\right)\stackrel{\mathscr{L}}{=}U_{\left(m+1:2m+1\right)},
\]
where Table \ref{tab:notation} defines $A\stackrel{\mathscr{L}}{=}B$.
This means that $\Pr\left(\lambda_{b}\left(X_{\left(R_{k,b}\right)}-\bar{\mu}\right)\leq x\right)$
\begin{align}
 & =\Pr\left(\lambda_{b}\left(G_{k-1,b}^{-1}\left(U_{\left(m+1:2m+1\right)}\right)-\bar{\mu}\right)\leq x\right)\\
 & =\Pr\left(U_{\left(m+1:2m+1\right)}\leq G_{k-1,b}\left(\nicefrac{x}{\lambda_{b}}+\bar{\mu}\right)\right)\\
 & =\Phi\left(\sqrt{8\left(m+1\right)}\left(G_{k-1,b}\left(\nicefrac{x}{\lambda_{b}}+\bar{\mu}\right)-\nicefrac{1}{2}\right)\right)+{\cal O}\left(\nicefrac{1}{\sqrt{b}}\right)\label{eq:useLemma}\\
 & =\Phi\left(\frac{\dot{\Psi}_{b}^{\left(k-1\right)}\left(F\left(x_{1,b}\right)\right)}{\theta_{b}^{k-1}}\frac{f\left(x_{1,b}\right)}{f\left(\bar{\mu}\right)}x\right)+{\cal O}\left(\nicefrac{1}{\sqrt{b}}\right)\stackrel[\infty]{b}{\longrightarrow}\Phi\left(x\right),\label{eq:useDeriv}
\end{align}
where (\ref{eq:useLemma}) uses Lemma \ref{lem:medianOfUniform} and
(\ref{eq:useDeriv}) uses (in order of appearance) (\ref{eq:usefixed}),
(\ref{eq:def-lambda-b}), (\ref{eq:bound_Rdotdot}), and (\ref{eq:AssumptionCOI}).
This completes the proof.
\end{proof}

\section{Proof of Lemma \ref{lem:iter_rem_to_normal} \label{sec:Iterated_Remedians}}

We now prove our iterated remedian result, starting with the following
fact.
\begin{fact}
\label{fact:continuous_R}$\Psi_{b}^{\left(k\right)}:\left[0,1\right]\rightarrow\left[0,1\right]$
is continuous.
\end{fact}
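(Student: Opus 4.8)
The plan is a straightforward induction on $k$, using the recursion (\ref{eq:recursive}) together with the standard fact that a composition of continuous functions is continuous. The only subtlety is that one must carry the codomain claim—that $\Psi_b^{(k)}$ maps $\left[0,1\right]$ into $\left[0,1\right]$—through the induction, since this is exactly what guarantees that the outer map $\Psi_b^{(1)}$ in the composition is being evaluated on its domain.

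For the base cases I would note that $\Psi_b^{(0)}\left(x\right) = x$ is the identity on $\left[0,1\right]$, hence continuous with range $\left[0,1\right]$, and that $\Psi_b^{(1)}$, being the $\mathrm{Beta}\left(m+1,m+1\right)$ CDF given explicitly by the integral in (\ref{eq:Psi(1)}), is continuous as an indefinite integral of a bounded (indeed continuous) density on $\left[0,1\right]$, and satisfies $\Psi_b^{(1)}\left(0\right)=0$ and $\Psi_b^{(1)}\left(1\right)=1$ with $0 \le \Psi_b^{(1)}\left(x\right) \le 1$ for all $x \in \left[0,1\right]$; so $\Psi_b^{(1)}:\left[0,1\right]\rightarrow\left[0,1\right]$.

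For the inductive step, assume $\Psi_b^{(k-1)}:\left[0,1\right]\rightarrow\left[0,1\right]$ is continuous. By (\ref{eq:recursive}), $\Psi_b^{(k)} = \Psi_b^{(1)}\circ\Psi_b^{(k-1)}$. Since $\Psi_b^{(k-1)}$ takes values in $\left[0,1\right]$, which is the domain on which $\Psi_b^{(1)}$ is continuous, the composition is well-defined and continuous on $\left[0,1\right]$; and since $\Psi_b^{(1)}$ has range in $\left[0,1\right]$, so does $\Psi_b^{(k)}$. This closes the induction.

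I expect no real obstacle here—the statement is essentially a bookkeeping consequence of (\ref{eq:recursive}) and (\ref{eq:Psi(1)}). If anything, the one point deserving care is simply making sure the range claim is part of the inductive hypothesis rather than an afterthought, so that the composition in the inductive step is legitimate; everything else is routine.
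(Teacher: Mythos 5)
Your proposal is correct and follows essentially the same route as the paper: the paper also observes that $\Psi_b^{(1)}$ is continuous as the integral of a continuous function and then closes the argument by induction via composition of continuous functions. Your extra care in carrying the codomain claim $\Psi_b^{(k)}:\left[0,1\right]\rightarrow\left[0,1\right]$ through the induction is a reasonable tidying of a detail the paper leaves implicit (it is also available from Fact \ref{fact:fixed-points} and Fact \ref{fact:monotonic}).
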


\begin{proof}
$\Psi_{b}^{\left(1\right)}$, the integral of a continuous function,
is continuous. $\Psi_{b}^{\left(k\right)}$ is continuous by induction:
the composition of two continuous functions is continuous.
\end{proof}
\iterRemToNormal*
\begin{proof}
Fact \ref{fact:continuous_R} and assumption (\ref{eq:AssumptionCOI_half})
imply that $\Psi_{b_{t,m}}^{\left(k_{m}\right)}\circ\Psi_{b_{t,m-1}}^{\left(k_{m-1}\right)}\circ\cdots\circ\Psi_{b_{t,1}}^{\left(k_{1}\right)}\circ F$
is continuous on an open interval ${\cal I}$ containing $\bar{\mu}$.
Noting that 
\[
\bar{\mu}+\frac{\left(\nicefrac{\pi}{2}\right)^{\frac{\sum_{i=1}^{m}k_{i}-1}{2}}y}{2f\left(\bar{\mu}\right)\prod_{i=1}^{m}b_{t,i}^{\nicefrac{k_{i}}{2}}}\in\mathcal{I},
\]
for $t$ large enough, we see that the multivariate limit in (\ref{eq:iterLim})
exists. We proceed by induction on $m\geq1$. Theorem \ref{thm:rem_to_normal}
gives the base case $m=1$. Assuming that
\begin{equation}
\Psi_{b_{t,m-1}}^{\left(k_{m-1}\right)}\circ\Psi_{b_{t,m-2}}^{\left(k_{m-2}\right)}\circ\cdots\circ\Psi_{b_{t,1}}^{\left(k_{1}\right)}\circ F\left(\bar{\mu}+\frac{\left(\nicefrac{\pi}{2}\right)^{\frac{\sum_{i=1}^{m-1}k_{i}-1}{2}}y}{2f\left(\bar{\mu}\right)\prod_{i=1}^{m-1}b_{t,i}^{\nicefrac{k_{i}}{2}}}\right)\stackrel{t\rightarrow\infty}{\longrightarrow}\Phi\left(y\right),\label{eq:iter_ind}
\end{equation}
for $m\geq2$, we have
\begin{align}
 & \Psi_{b_{m}}^{\left(k_{m}\right)}\circ\Psi_{b_{t,m-1}}^{\left(k_{m-1}\right)}\circ\cdots\circ\Psi_{b_{t,1}}^{\left(k_{1}\right)}\circ F\left(\bar{\mu}+\frac{\left(\nicefrac{\pi}{2}\right)^{\frac{\sum_{i=1}^{m}k_{i}-1}{2}}y}{2f\left(\bar{\mu}\right)b_{m}^{\nicefrac{k_{m}}{2}}\prod_{i=1}^{m-1}b_{t,i}^{\nicefrac{k_{i}}{2}}}\right)\\
 & \qquad\stackrel{t\rightarrow\infty}{\longrightarrow}\Psi_{b_{m}}^{\left(k_{m}\right)}\circ\Phi\left(\left(\nicefrac{\pi}{2b_{m}}\right)^{\nicefrac{k_{m}}{2}}y\right)=\Psi_{b_{m}}^{\left(k_{m}\right)}\circ\Phi\left(0+\frac{\left(\nicefrac{\pi}{2}\right)^{\frac{k_{m}-1}{2}}y}{2b_{m}^{\nicefrac{k_{m}}{2}}\phi\left(0\right)}\right)\label{eq:first_limit}\\
 & \qquad\stackrel{b_{m}\rightarrow\infty}{\longrightarrow}\Phi\left(y\right),\label{eq:second_limit}
\end{align}
where the first expression in (\ref{eq:first_limit}) uses Fact \ref{fact:continuous_R}
and (\ref{eq:iter_ind}), and (\ref{eq:second_limit}) uses Theorem
\ref{thm:rem_to_normal}. This completes the proof. 
\end{proof}

\section{Proof of Theorem \ref{thm:rem_rank_to_normal} \label{sec:Rank_to_Normal}}

We turn the the asymptotic normality of the remedian rank $R_{k,b}$,
starting with three lemmas. The latter two are well-known and proved
here for completeness.
\begin{lem}
\label{lem:existLim}For $k\geq2$ and $b\in\mathcal{B}_{3}$ let
\begin{align*}
\mathcal{R}_{k,b} & \coloneqq\left\{ \left\lceil \nicefrac{b}{2}\right\rceil ^{k},\left\lceil \nicefrac{b}{2}\right\rceil ^{k}+1,\ldots,b^{k}+1-\left\lceil \nicefrac{b}{2}\right\rceil ^{k}\right\} \\
\overline{\mathcal{R}}_{k,b} & \coloneqq2b^{\nicefrac{k}{2}}\left(b^{-k}\mathcal{R}_{k,b}-\nicefrac{1}{2}\right).
\end{align*}
For every $r\in\mathbb{R}$ there is a sequence of values $r_{j}\in\overline{\mathcal{R}}_{k,b_{j}}$
such that $b_{j}=b_{0}+2j$, for $b_{0}\in\mathcal{B}_{3}$, and $\lim_{j\rightarrow\infty}r_{j}=r$.
\end{lem}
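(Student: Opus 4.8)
The plan is to exploit the elementary geometry of $\overline{\mathcal{R}}_{k,b}$: as $b\to\infty$ through $\mathcal{B}_3$ it is an arithmetic progression whose common difference shrinks to $0$ while its two endpoints run off to $-\infty$ and $+\infty$. Once that is established, the claim — that every real is a limit of points drawn from the $\overline{\mathcal{R}}_{k,b_j}$ — follows by a one-line nearest-point argument.

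First I would record the geometry. Writing $b=2m+1$, so $\lceil b/2\rceil/b=(b+1)/(2b)$, the set $\mathcal{R}_{k,b}$ is a block of consecutive integers, from $\lceil b/2\rceil^{k}$ to $b^{k}+1-\lceil b/2\rceil^{k}$; applying the affine standardization $n\mapsto 2b^{k/2}(n/b^{k}-\tfrac12)$ turns it into an arithmetic progression with common difference $2b^{-k/2}$, left endpoint
\[
a_b \;:=\; 2b^{k/2}\!\left[\Bigl(\tfrac{b+1}{2b}\Bigr)^{k}-\tfrac12\right] \;=\; 2^{\,1-k}\,\frac{(b+1)^{k}}{b^{k/2}}-b^{k/2},
\]
and right endpoint $A_b:=2b^{-k/2}-a_b$, the last identity coming from the involution $n\leftrightarrow b^{k}+1-n$, which maps $\mathcal{R}_{k,b}$ onto itself and becomes the reflection $x\mapsto 2b^{-k/2}-x$ after standardizing.

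The one substantive step is to show $a_b\to-\infty$ and $A_b\to+\infty$ as $b\to\infty$. Expanding $(b+1)^{k}=b^{k}\bigl(1+k/b+\mathcal{O}(b^{-2})\bigr)$ gives $a_b=(2^{\,1-k}-1)\,b^{k/2}+\mathcal{O}(b^{k/2-1})$, and because $k\geq2$ forces $2^{\,1-k}\leq\tfrac12<1$ the leading coefficient is strictly negative, so $a_b\to-\infty$ and hence $A_b=2b^{-k/2}-a_b\to+\infty$. This is exactly where the hypothesis $k\geq2$ enters — for $k=1$ one has $\overline{\mathcal{R}}_{1,b}=\{b^{-1/2}\}\to\{0\}$ and the statement is false — and I expect this sign computation to be the only real content of the proof; the rest is bookkeeping.

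To conclude, I would fix $r\in\mathbb{R}$. Since $a_b\to-\infty$ and $A_b\to+\infty$, I can pick $b_0\in\mathcal{B}_3$ with $a_b<r<A_b$ for every $b\in\mathcal{B}_3$ with $b\geq b_0$; then each such $\overline{\mathcal{R}}_{k,b}$ has at least two points and $r$ lies in its range $[a_b,A_b]$. Setting $b_j:=b_0+2j$ and taking $r_j$ to be the element of $\overline{\mathcal{R}}_{k,b_j}$ nearest $r$, the step size $2b_j^{-k/2}$ of the progression together with $r\in[a_{b_j},A_{b_j}]$ yields $|r_j-r|\leq b_j^{-k/2}\to0$, so $r_j\to r$, as claimed.
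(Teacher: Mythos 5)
Your proposal is correct and follows essentially the same route as the paper's proof: both observe that $\overline{\mathcal{R}}_{k,b}$ is an arithmetic progression with spacing $2b^{-k/2}\to 0$ whose endpoints behave like $\pm\bigl(1-(\nicefrac{1}{2})^{k-1}\bigr)b^{\nicefrac{k}{2}}\to\mp\infty,\pm\infty$, and then take nearest points. Your write-up is merely more explicit about the endpoint asymptotics and about where $k\geq 2$ is needed, which the paper leaves implicit.
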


\begin{proof}
Consecutive elements in $\mathcal{R}_{k,b}$ are one unit apart, which
implies that consecutive elements in $\overline{\mathcal{R}}_{k,b}$
are $2b^{\nicefrac{-k}{2}}$ units apart. Further, we note that 
\[
\overline{\mathcal{R}}_{k,b}\stackrel{b\rightarrow\infty}{\sim}\left(1-\left(\nicefrac{1}{2}\right)^{k-1}\right)b^{\nicefrac{k}{2}}\left[-1,1\right],
\]
so that, for $b$ large enough and $\epsilon>0$, $\overline{\mathcal{R}}_{k,b}$
contains elements both larger than $r+\epsilon$ and smaller than
$r-\epsilon$. This completes the proof.
\end{proof}
\begin{lem}
\label{lem:DirK2}If $B_{n}\sim\mathrm{Beta}\left(i_{n},n-i_{n}+1\right)$,
$i_{n}\in\left[n\right]$, and $\frac{i_{n}}{n}\stackrel[\infty]{n}{\longrightarrow}\alpha\in\left(0,1\right)$,
\[
\sqrt{n}\left(B_{n}-\frac{i_{n}}{n+1}\right)\stackrel[\infty]{n}{\Longrightarrow}\mathcal{N}\left(0,\alpha\left(1-\alpha\right)\right).
\]
\end{lem}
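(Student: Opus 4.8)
The plan is to realize $B_n$ as an explicit function of i.i.d.\ exponential variables and then combine a central limit theorem with Slutsky's theorem. By (\ref{eq:BjnDef}) with $F=\mathrm{Uniform}(0,1)$ we have $B_n\stackrel{\mathscr{L}}{=}U_{(i_n:n)}$, the $i_n$th order statistic of $n$ i.i.d.\ uniforms, and by the standard R\'enyi representation (see, e.g., \citet{DN03}), if $E_1,E_2,\ldots$ are i.i.d.\ $\mathrm{Exp}(1)$ and $S_j\coloneqq\sum_{l=1}^{j}E_l$, then $U_{(i_n:n)}\stackrel{\mathscr{L}}{=}S_{i_n}/S_{n+1}$. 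Setting $A\coloneqq S_{i_n}\sim\Gamma(i_n,1)$ and $B\coloneqq S_{n+1}-S_{i_n}\sim\Gamma(n+1-i_n,1)$, which are independent since they sum disjoint blocks of the $E_l$, a one-line rearrangement gives
\[
\sqrt{n}\left(B_n-\tfrac{i_n}{n+1}\right)=\frac{n+1}{A+B}\cdot\frac{\sqrt{n}\bigl[(n+1-i_n)A-i_nB\bigr]}{(n+1)^2}.
\]

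Next I would handle the two factors. Since $i_n/n\to\alpha\in(0,1)$ we have both $i_n\to\infty$ and $n+1-i_n\to\infty$, so the i.i.d.\ CLT applied separately to the two independent Gamma sums yields that $\bigl(\tfrac{A-i_n}{\sqrt{i_n}},\tfrac{B-(n+1-i_n)}{\sqrt{n+1-i_n}}\bigr)$ converges in distribution to $\mathcal{N}_2(\mathbf{0},\mathbf{I})$; write the limiting coordinates as $(Z_1,Z_2)$. Substituting $A=i_n+\sqrt{i_n}Z_{1,n}$ and $B=(n+1-i_n)+\sqrt{n+1-i_n}Z_{2,n}$ into $(n+1-i_n)A-i_nB$, the $\mathcal{O}(n^2)$ deterministic parts cancel and we are left with $(n+1-i_n)\sqrt{i_n}\,Z_{1,n}-i_n\sqrt{n+1-i_n}\,Z_{2,n}$. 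Dividing by $(n+1)^2$ and multiplying by $\sqrt{n}$, the coefficients of $Z_{1,n}$ and $Z_{2,n}$ converge (using $i_n/(n+1)\to\alpha$ and $\sqrt{n}/\sqrt{n+1}\to1$) to $(1-\alpha)\sqrt{\alpha}$ and $-\alpha\sqrt{1-\alpha}$, respectively. Meanwhile the SLLN gives $(A+B)/(n+1)=S_{n+1}/(n+1)\to1$ almost surely, so the first factor converges to $1$.

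Assembling these via Slutsky's theorem, $\sqrt{n}(B_n-\tfrac{i_n}{n+1})$ converges in distribution to $(1-\alpha)\sqrt{\alpha}\,Z_1-\alpha\sqrt{1-\alpha}\,Z_2$, a centered normal with variance $\alpha(1-\alpha)^2+(1-\alpha)\alpha^2=\alpha(1-\alpha)$, which is the claim. The only step needing any care is the CLT: because $A$ and $B$ are sums of a growing, $n$-dependent number of i.i.d.\ terms, one must note that each count tends to infinity (this is exactly where $\alpha$ bounded away from $0$ and $1$ is used) and then combine the two marginal CLTs through independence; an alternative is to simply cite the classical asymptotic normality of central (intermediate-free) order statistics from \citet{DN03} or \citet{R89}, of which Lemma~\ref{lem:DirK2} is the uniform special case with a moving but convergent rank. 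I expect this bookkeeping to be the main---though still routine---obstacle; everything else is elementary algebra plus Slutsky.
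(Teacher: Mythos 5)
Your proof is correct and follows essentially the same route as the paper's: both represent the $\mathrm{Beta}(i_n,n-i_n+1)$ variable as a ratio of independent Gamma sums of i.i.d.\ exponentials, split the centered numerator into two independent pieces handled by the CLT, send the denominator to $1$ by the SLLN, and conclude with Slutsky's theorem, yielding the variance $\alpha(1-\alpha)^2+\alpha^2(1-\alpha)=\alpha(1-\alpha)$. The only difference is cosmetic (you enter via the R\'enyi representation of the uniform order statistic rather than constructing the Beta directly from Gammas), so there is nothing to fix.
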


\begin{proof}
Let $\bar{i}_{n}\coloneqq n-i_{n}+1$. Next, for $\xi_{1},\xi_{2},\ldots,\xi_{n+1}\stackrel{\mathrm{iid}}{\sim}\mathrm{Exp}\left(1\right)$,
let 
\begin{align*}
G & \coloneqq\sum_{i=1}^{i_{n}}\xi_{i}\sim\mathrm{Gamma}\left(i_{n},1\right)\\
G' & \coloneqq\sum_{i=i_{n}+1}^{n+1}\xi_{i}\sim\mathrm{Gamma}\left(n-i_{n}+1,1\right)
\end{align*}
so that $G$ and $G'$ are independent, and 
\[
B_{n}\stackrel{\mathscr{L}}{=}\frac{G}{G+G'}\sim\mathrm{Beta}\left(i_{n},n-i_{n}+1\right)
\]
Then,
\begin{align}
\sqrt{n}\left(B_{n}-\frac{i_{n}}{n+1}\right) & \stackrel{\mathscr{L}}{=}\sqrt{n}\left(\frac{\sum_{i=1}^{i_{n}}\xi_{i}-\frac{i_{n}}{n+1}\sum_{i=1}^{n+1}\xi_{i}}{\sum_{i=1}^{n+1}\xi_{i}}\right)\\
 & =\frac{\frac{\sqrt{n}}{n+1}\frac{\bar{i}_{n}}{n+1}\sum_{i=1}^{i_{n}}\xi_{i}-\frac{\sqrt{n}}{n+1}\frac{i_{n}}{n+1}\sum_{i=i_{n}+1}^{n+1}\xi_{i}}{\frac{1}{n+1}\sum_{i=1}^{n+1}\xi_{i}}\label{eq:full-expr}
\end{align}
The first term in the numerator of (\ref{eq:full-expr}) equals
\begin{align}
\frac{\sqrt{n}}{n+1}\frac{\bar{i}_{n}}{n+1}\sum_{i=1}^{i_{n}}\xi_{i} & =\frac{\sqrt{n}}{n+1}\frac{\bar{i}_{n}}{n+1}\left[\sum_{i=1}^{i_{n}}\left(\xi_{i}-1\right)+i_{n}\right]\\
 & =\frac{\sqrt{ni_{n}}}{n+1}\frac{\bar{i}_{n}}{n+1}\frac{1}{\sqrt{i_{n}}}\sum_{i=1}^{i_{n}}\left(\xi_{i}-1\right)+\frac{\sqrt{n}i_{n}\bar{i}_{n}}{\left(n+1\right)^{2}}\label{eq:1st-term}
\end{align}
The second term in the numerator of (\ref{eq:full-expr}) equals 
\begin{align}
-\frac{\sqrt{n}}{n+1}\frac{i_{n}}{n+1}\sum_{i=i_{n}+1}^{n+1}\xi_{i} & =-\frac{\sqrt{n}}{n+1}\frac{i_{n}}{n+1}\left[\sum_{i=i_{n}+1}^{n+1}\left(\xi_{i}-1\right)+\bar{i}_{n}\right]\\
 & =-\frac{\sqrt{n\bar{i}_{n}}}{n+1}\frac{i_{n}}{n+1}\frac{1}{\sqrt{\bar{i}_{n}}}\sum_{i=i_{n}+1}^{n+1}\left(\xi_{i}-1\right)-\frac{\sqrt{n}i_{n}\bar{i}_{n}}{\left(n+1\right)^{2}}\label{eq:2nd-term}
\end{align}
Summing (\ref{eq:1st-term}) and (\ref{eq:2nd-term}) we see that
$\sqrt{n}\left(B_{n}-\frac{i_{n}}{n+1}\right)$ equals
\begin{equation}
\frac{\frac{\sqrt{ni_{n}}}{n+1}\frac{\bar{i}_{n}}{n+1}\frac{1}{\sqrt{i_{n}}}\sum_{i=1}^{i_{n}}\left(\xi_{i}-1\right)-\frac{\sqrt{n\bar{i}_{n}}}{n+1}\frac{i_{n}}{n+1}\frac{1}{\sqrt{\bar{i}_{n}}}\sum_{i=i_{n}+1}^{n+1}\left(\xi_{i}-1\right)}{\frac{1}{n+1}\sum_{i=1}^{n+1}\xi_{i}}\label{eq:combined}
\end{equation}
To see that (\ref{eq:combined}) converges to $\mathcal{N}\left(0,\alpha\left(1-\alpha\right)\right)$,
note that
\begin{enumerate}
\item The denominator converges almost surely to one (SLLN);
\item $\frac{\sqrt{ni_{n}}}{n+1}\frac{\bar{i}_{n}}{n+1}\frac{1}{\sqrt{i_{n}}}\sum_{i=1}^{i_{n}}\left(\xi_{i}-1\right)\stackrel[\infty]{n}{\Longrightarrow}\sqrt{\alpha}\left(1-\alpha\right)Z$,
for $Z\sim\mathcal{N}\left(0,1\right)$ (CLT);
\item $\frac{\sqrt{n\bar{i}_{n}}}{n+1}\frac{i_{n}}{n+1}\frac{1}{\sqrt{\bar{i}_{n}}}\sum_{i=i_{n}+1}^{n+1}\left(\xi_{i}-1\right)\stackrel[\infty]{n}{\Longrightarrow}\alpha\sqrt{1-\alpha}Z'$,
for $Z'\sim\mathcal{N}\left(0,1\right)$ (CLT);
\item $\sum_{i=1}^{i_{n}}\xi_{i}\bot\sum_{i=i_{n}+1}^{n+1}\xi_{i}$ implies
$Z\bot Z'$, where $\bot$ indicates independence;
\item Numerator of (\ref{eq:combined}) $\stackrel[\infty]{n}{\Longrightarrow}\sqrt{\alpha}\left(1-\alpha\right)Z-\alpha\sqrt{1-\alpha}Z'$
(independence);
\item $\sqrt{\alpha}\left(1-\alpha\right)Z-\alpha\sqrt{1-\alpha}Z'\sim\mathcal{N}\left(0,\alpha\left(1-\alpha\right)\right)$.
\end{enumerate}
While the numerator of (\ref{eq:combined}) converges to $\mathcal{N}\left(0,\alpha\left(1-\alpha\right)\right)$
and the denominator converges almost surely to one, Slutsky's theorem
gives the desired result.
\end{proof}
\begin{lem}
\label{lem:Marg-Lik}$X\sim\mathcal{N}\left(0,\sigma^{2}+\tau^{2}\right)\textrm{ and }\left.X\right|\Theta\sim\mathcal{N}\left(\Theta,\tau^{2}\right)\implies\Theta\sim\mathcal{N}\left(0,\sigma^{2}\right)$.
\end{lem}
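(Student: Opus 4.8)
The plan is to argue by characteristic functions, exploiting the fact that a Gaussian characteristic function never vanishes, so the implicit deconvolution is harmless. Write $\varphi_X(t)\coloneqq\mathbb{E}e^{itX}$ and $\psi(t)\coloneqq\mathbb{E}e^{it\Theta}$; the latter is well-defined for \emph{any} random variable $\Theta$, so no moment assumptions on $\Theta$ are needed in advance.

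First I would compute $\varphi_X$ in two ways. On the one hand, $X\sim\mathcal{N}(0,\sigma^{2}+\tau^{2})$ gives $\varphi_X(t)=\exp\!\big(-\tfrac12(\sigma^{2}+\tau^{2})t^{2}\big)$. On the other hand, conditioning on $\Theta$ and using that $\mathbb{E}\big[e^{itX}\mid\Theta\big]$ is the characteristic function of $\mathcal{N}(\Theta,\tau^{2})$ evaluated at $t$, namely $e^{it\Theta-\tau^{2}t^{2}/2}$, the tower property yields $\varphi_X(t)=\mathbb{E}\big[e^{it\Theta}\big]e^{-\tau^{2}t^{2}/2}=\psi(t)e^{-\tau^{2}t^{2}/2}$. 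Equating the two expressions and dividing by the strictly positive factor $e^{-\tau^{2}t^{2}/2}$ gives $\psi(t)=\exp(-\tfrac12\sigma^{2}t^{2})$ for every $t\in\mathbb{R}$. Since this is precisely the characteristic function of $\mathcal{N}(0,\sigma^{2})$, the uniqueness theorem for characteristic functions (e.g.\ \citet{B95}) forces $\Theta\sim\mathcal{N}(0,\sigma^{2})$, as claimed. Equivalently, one may write $X=\Theta+\varepsilon$ with $\varepsilon\sim\mathcal{N}(0,\tau^{2})$ independent of $\Theta$ (independence holds because the conditional law of $\varepsilon=X-\Theta$ given $\Theta$ is $\mathcal{N}(0,\tau^{2})$, free of $\Theta$), and then $\varphi_X=\psi\cdot\varphi_\varepsilon$ gives the same conclusion.

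There is no serious obstacle here; the one point that merits a sentence of care is the conditional characteristic-function identity $\mathbb{E}\big[e^{itX}\mid\Theta\big]=e^{it\Theta-\tau^{2}t^{2}/2}$, which is just the Gaussian characteristic function applied inside the conditional expectation, together with the observation that no a priori integrability of $\Theta$ is required---the argument produces the full distribution of $\Theta$ directly. The degenerate cases ($\tau=0$, trivial; $\sigma=0$, giving $\Theta\equiv0$) are consistent with the statement and need no separate treatment.
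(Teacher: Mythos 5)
Your proof is correct and is essentially the same argument as the paper's: both identify the transform of the law of $X$ as the product $e^{-\tau^{2}t^{2}/2}$ times the transform of the law of $\Theta$, divide out the nonvanishing Gaussian factor, and invoke uniqueness. The only difference is cosmetic but slightly to your advantage --- the paper works with densities and an explicit inverse Fourier transform, implicitly assuming $\Theta$ has a density, whereas your characteristic-function formulation via the tower property requires no such assumption.
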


\begin{proof}
Let $f\left(x\right)$, $p\left(\left.x\right|\theta\right)$, and
$g\left(\theta\right)$ be the densities of $X$, $\left.X\right|\Theta$,
and $\Theta$. Then,
\begin{equation}
f\left(x\right)=\int_{-\infty}^{\infty}p\left(\left.x\right|\theta\right)g\left(\theta\right)d\theta.\label{eq:integral}
\end{equation}
Taking the Fourier transform of both sides of (\ref{eq:integral})
yields
\begin{align}
\hat{f}\left(t\right) & \coloneqq\int_{-\infty}^{\infty}f\left(x\right)\exp\left(itx\right)dx=\exp\left(\nicefrac{-t^{2}\left(\sigma^{2}+\tau^{2}\right)}{2}\right)\\
 & =\int_{-\infty}^{\infty}\int_{-\infty}^{\infty}p\left(\left.x\right|\theta\right)g\left(\theta\right)d\theta\exp\left(itx\right)dx\\
 & =\int_{-\infty}^{\infty}\int_{-\infty}^{\infty}p\left(\left.x\right|\theta\right)\exp\left(itx\right)dx\,g\left(\theta\right)d\theta\label{eq:Fubini}\\
 & =\exp\left(\nicefrac{-t^{2}\tau^{2}}{2}\right)\int_{-\infty}^{\infty}g\left(\theta\right)\exp\left(it\theta\right)d\theta\\
 & \eqqcolon\exp\left(\nicefrac{-t^{2}\tau^{2}}{2}\right)\hat{g}\left(t\right),
\end{align}
where (\ref{eq:Fubini}) uses Fubini's theorem. Taking the inverse
Fourier transform gives 
\begin{align*}
g\left(\theta\right) & =\int_{-\infty}^{\infty}\hat{g}\left(t\right)\exp\left(-it\theta\right)dt=\int_{-\infty}^{\infty}\exp\left(\nicefrac{-t^{2}\sigma^{2}}{2}\right)\exp\left(-it\theta\right)dt\\
 & =\left.\exp\left(\nicefrac{-1}{2}\left(\nicefrac{\theta}{\sigma}\right)^{2}\right)\right/\sqrt{2\pi\sigma^{2}}.
\end{align*}
\end{proof}
\remedRankToNormal*
\begin{proof}
With $R_{1,b}\equiv\nicefrac{\left(b+1\right)}{2}$ the result holds
when $k=1$, so assume that $k\geq2$. For $U_{1},\ldots,U_{b^{k}}\stackrel{\mathrm{iid}}{\sim}\mathrm{Uniform}\left(0,1\right)$,
Theorem \ref{thm:rem_to_normal} gives
\begin{equation}
2b^{\nicefrac{k}{2}}\left(U_{\left(R_{k,b}\right)}-\nicefrac{1}{2}\right)\stackrel[\infty]{b}{\Longrightarrow}\mathcal{N}\left(0,\left(\nicefrac{\pi}{2}\right)^{k-1}\right).\label{eq:limUbar}
\end{equation}
For $r\in\mathbb{R}$ and 
\begin{align*}
\mathcal{R}_{k,b} & \coloneqq\left\{ \left\lceil \nicefrac{b}{2}\right\rceil ^{k},\left\lceil \nicefrac{b}{2}\right\rceil ^{k}+1,\ldots,b^{k}+1-\left\lceil \nicefrac{b}{2}\right\rceil ^{k}\right\} \\
\overline{\mathcal{R}}_{k,b} & \coloneqq2b^{\nicefrac{k}{2}}\left(b^{-k}\mathcal{R}_{k,b}-\nicefrac{1}{2}\right),
\end{align*}
let $r_{j}\in\overline{\mathcal{R}}_{k,b_{j}}$ be a sequence of values
such that $b_{j}=b_{0}+2j$, for $b_{0}\in\mathcal{B}_{3}$, and $\lim_{j\rightarrow\infty}r_{j}=r$
(see Lemma \ref{lem:existLim}), and let 
\begin{equation}
\overline{U}_{k,j}\coloneqq2b_{j}^{\nicefrac{k}{2}}\left(U_{\left(R_{k,b_{j}}\right)}-\nicefrac{1}{2}\right)\textrm{ and }\overline{R}_{k,j}\coloneqq2b_{j}^{\nicefrac{k}{2}}\left(b_{j}^{-k}R_{k,b_{j}}-\nicefrac{1}{2}\right).
\end{equation}
Then, $\overline{R}_{k,j}=r_{j}$ implies that $R_{k,b_{j}}=\frac{b_{j}^{k}+b_{j}^{\nicefrac{k}{2}}r_{j}}{2}$
and 
\begin{align*}
\mathrm{E}\left[\left.\overline{U}_{k,j}\right|\overline{R}_{k,j}=r_{j}\right] & =2b_{j}^{\nicefrac{k}{2}}\left(\frac{b_{j}^{k}+b_{j}^{\nicefrac{k}{2}}r_{j}}{2\left(b_{j}^{k}+1\right)}-\frac{1}{2}\right)\stackrel{j\rightarrow\infty}{\longrightarrow}r\\
\mathrm{Var}\left(\left.\overline{U}_{k,j}\right|\overline{R}_{k,j}=r_{j}\right) & =4b_{j}^{k}\frac{\left(b_{j}^{k}+b_{j}^{\nicefrac{k}{2}}r_{j}\right)\left(b_{j}^{k}-b_{j}^{\nicefrac{k}{2}}r_{j}+2\right)}{4\left(b_{j}^{k}+1\right)^{2}\left(b_{j}^{k}+2\right)}\stackrel{j\rightarrow\infty}{\longrightarrow}1
\end{align*}
because $\left(\left.U_{\left(R_{k,b}\right)}\right|R_{k,b}\right)\sim\mathrm{Beta}\left(R_{k,b},b^{k}+1-R_{k,b}\right)$.
Then, Lemma \ref{lem:DirK2} \footnote{Use 
\begin{align*}
\overline{U}_{k,j} & =2b_{j}^{\nicefrac{k}{2}}\left(U_{\left(R_{k,b_{j}}\right)}-\frac{b_{j}^{k}+b_{j}^{\nicefrac{k}{2}}r_{j}}{2\left(b_{j}^{k}+1\right)}\right)+2b_{j}^{\nicefrac{k}{2}}\left(\frac{b_{j}^{k}+b_{j}^{\nicefrac{k}{2}}r_{j}}{2\left(b_{j}^{k}+1\right)}-\frac{1}{2}\right)\\
 & =2b_{j}^{\nicefrac{k}{2}}\left(U_{\left(R_{k,b_{j}}\right)}-\frac{b_{j}^{k}+b_{j}^{\nicefrac{k}{2}}r_{j}}{2\left(b_{j}^{k}+1\right)}\right)+\frac{b_{j}^{k}r_{j}}{b_{j}^{k}+1}+b_{j}^{\nicefrac{k}{2}}\left(\frac{b_{j}^{k}}{b_{j}^{k}+1}-1\right)
\end{align*}
 and Slutsky's theorem.} gives
\begin{equation}
\left(\left.\overline{U}_{k,j}\right|\overline{R}_{k,j}=r_{j}\right)\stackrel[\infty]{j}{\Longrightarrow}\mathcal{N}\left(r,1\right).\label{eq:limUbar|Xbar}
\end{equation}
Finally, panels (\ref{eq:limUbar}) through (\ref{eq:limUbar|Xbar})
and Lemma \ref{lem:Marg-Lik} give the desired result.
\end{proof}

\section{Proof of Theorem \ref{thm:rem_med_to_quad_normal} \label{sec:Multivariate-Convergence}}

We close by deriving the quadrivariate limit in Theorem \ref{thm:rem_med_to_quad_normal}.
Note that the limit for the last three variables holds when $\mathbb{E}\left|X_{1}\right|=\infty$.

\remMedToQuadNormal*
\begin{proof}
This is true for $k=1$. We assume $k\geq2$. By the Cram\'er-Wold theorem
(\citet{B95}, page 383), the result follows if we can show that $V_{k,b}\implies\mathcal{N}\left(0,\:\mathbf{a}^{\mathsf{T}}\boldsymbol{\Sigma}\mathbf{a}\right),$
as $b\rightarrow\infty$, where $\mathbf{a}\in\mathbb{R}^{4}$, 
\begin{equation}
\boldsymbol{\Sigma}\coloneqq\left(\begin{array}{cccc}
\sigma^{2} & \eta & \eta & 0\\
\eta & 1 & 1 & 0\\
\eta & 1 & \tau_{k}^{2} & \bar{\tau}_{k}^{2}\\
0 & 0 & \bar{\tau}_{k}^{2} & \bar{\tau}_{k}^{2}
\end{array}\right),\label{eq:Sigma}
\end{equation}
and
\begin{multline*}
V_{k,b}\coloneqq a_{1}b^{\nicefrac{k}{2}}\left(\bar{X}_{b^{k}}-\mu\right)+2a_{2}f\left(\bar{\mu}\right)b^{\nicefrac{k}{2}}\left(X_{\left(h_{k,b}\right)}-\bar{\mu}\right)\\
+2a_{3}f\left(\bar{\mu}\right)b^{\nicefrac{k}{2}}\left(X_{\left(R_{k,b}\right)}-\bar{\mu}\right)+2a_{4}b^{\nicefrac{k}{2}}\left(b^{-k}R_{k,b}-\nicefrac{1}{2}\right).
\end{multline*}
For $r\in\mathbb{R}$ and 
\begin{align*}
\mathcal{R}_{k,b} & \coloneqq\left\{ \left\lceil \nicefrac{b}{2}\right\rceil ^{k},\left\lceil \nicefrac{b}{2}\right\rceil ^{k}+1,\ldots,b^{k}+1-\left\lceil \nicefrac{b}{2}\right\rceil ^{k}\right\} \\
\overline{\mathcal{R}}_{k,b} & \coloneqq2b^{\nicefrac{k}{2}}\left(b^{-k}\mathcal{R}_{k,b}-\nicefrac{1}{2}\right),
\end{align*}
let $r_{j}\in\overline{\mathcal{R}}_{k,b_{j}}$ be a sequence of values
such that $b_{j}=b_{0}+2j$, for $b_{0}\in\mathcal{B}_{3}$, and $\lim_{j\rightarrow\infty}r_{j}=r$
(see Lemma \ref{lem:existLim}). Further, let
\[
\begin{array}{lcl}
\overline{R}_{k,j}\coloneqq2b_{j}^{\nicefrac{k}{2}}\left(b_{j}^{-k}R_{k,b_{j}}-\nicefrac{1}{2}\right) &  & \hspace{1bp}Z_{j}^{\left(h\right)}\coloneqq\sum_{i=1}^{b_{j}^{k}}\mathbf{1}_{\left\{ X_{i}>\bar{\mu}\right\} }\\
\hspace{10bp}p_{j}\coloneqq\frac{1+r_{j}b_{j}^{\nicefrac{-k}{2}}}{2}\eqqcolon1-q_{j} &  & Z_{j}^{\left(R\right)}\coloneqq\sum_{i=1}^{b_{j}^{k}}\mathbf{1}_{\left\{ X_{i}>F^{-1}\left(p_{j}\right)\right\} }.
\end{array}
\]
As $j\rightarrow\infty$, Lemma \ref{lem:bahadur66} then gives $\left(\left.V_{k,b_{j}}\right|\overline{R}_{k,j}=r_{j}\right)$
\begin{align}
 & \stackrel{\mathrm{a.s.}}{\sim}a_{1}b_{j}^{\nicefrac{k}{2}}\left(\bar{X}_{b_{j}^{k}}-\mu\right)+2a_{2}\frac{Z_{j}^{\left(h\right)}-\nicefrac{b_{j}^{k}}{2}}{b_{j}^{\nicefrac{k}{2}}}+2a_{3}\frac{Z_{j}^{\left(R\right)}-b_{j}^{k}q_{j}}{b_{j}^{\nicefrac{k}{2}}}+a_{4}r_{j}\nonumber \\
 & =\frac{\sum_{i=1}^{b_{j}^{k}}\left\{ \left(a_{1}X_{i}+2a_{2}\mathbf{1}_{\left\{ X_{i}>\bar{\mu}\right\} }+2a_{3}\mathbf{1}_{\left\{ X_{i}>F^{-1}\left(p_{j}\right)\right\} }\right)-\left(a_{1}\mu+a_{2}+a_{3}\right)\right\} }{b_{j}^{\nicefrac{k}{2}}}+\left(a_{3}+a_{4}\right)r_{j}\nonumber \\
 & \implies\mathcal{N}\left(\left(a_{3}+a_{4}\right)r,\:\mathrm{Var}\left(a_{1}X_{1}+2\left(a_{2}+a_{3}\right)\mathbf{1}_{\left\{ X_{1}>\bar{\mu}\right\} }\right)\right)\label{eq:useCLT}\\
 & =\mathcal{N}\left(\left(a_{3}+a_{4}\right)r,\:a_{1}^{2}\sigma^{2}+\left(a_{2}+a_{3}\right)^{2}+2a_{1}\left(a_{2}+a_{3}\right)\eta\right),\label{eq:getVar}
\end{align}
where (\ref{eq:useCLT}) uses the CLT, Slutsky's theorem, $q_{j}\rightarrow\nicefrac{1}{2}$,
and (\ref{eq:AssumptionCOI_half}), and (\ref{eq:getVar}) uses
\begin{align}
2\,\mathrm{Cov}\left(X_{1},\mathbf{1}_{\left\{ X_{1}>\bar{\mu}\right\} }\right) & =2\int_{\bar{\mu}}^{\infty}xdF\left(x\right)-\mu\label{eq:2Cov}\\
 & =2\int_{\bar{\mu}}^{\infty}xdF\left(x\right)-\left(\int_{-\infty}^{\bar{\mu}}xdF\left(x\right)+\int_{\bar{\mu}}^{\infty}xdF\left(x\right)\right)\nonumber \\
 & =\int_{\bar{\mu}}^{\infty}xdF\left(x\right)-\int_{-\infty}^{\bar{\mu}}xdF\left(x\right)+\bar{\mu}-\bar{\mu}\nonumber \\
 & =\int_{-\infty}^{\bar{\mu}}\left(\bar{\mu}-x\right)dF\left(x\right)+\int_{\bar{\mu}}^{\infty}\left(x-\bar{\mu}\right)dF\left(x\right)\nonumber \\
 & =\mathbb{E}\left|X_{1}-\bar{\mu}\right|\eqqcolon\eta.\nonumber 
\end{align}
To summarize, we have 
\begin{align}
\left(\left.V_{k}\right|\overline{R}_{k}=r\right) & \coloneqq\lim_{j\rightarrow\infty}\left(\left.V_{k,b_{j}}\right|\overline{R}_{k,j}=r_{j}\right)\sim\mathcal{N}\left(Ar,\:B\right)\label{eq:Vk-dist}\\
\textrm{and }\overline{R}_{k} & \coloneqq\lim_{j\rightarrow\infty}\overline{R}_{k,j}\sim\mathcal{N}\left(0,\:C\right),\label{eq:Rbark-dist}
\end{align}
where (\ref{eq:Vk-dist}) uses $A$ and $B$ defined by (\ref{eq:getVar}),
and (\ref{eq:Rbark-dist}) uses $C\coloneqq\left(\nicefrac{\pi}{2}\right)^{k-1}-1$
as in Theorem \ref{thm:rem_rank_to_normal}. Letting $V_{k}\coloneqq\lim_{b\rightarrow\infty}V_{k,b}$
and $v\in\mathbb{R}$, we finally have
\begin{align*}
\frac{d\Pr\left(V_{k}\leq v\right)}{dv} & =\int_{-\infty}^{\infty}\frac{d\Pr\left(\left.V_{k}\leq v\right|\overline{R}_{k}=r\right)}{dv}\frac{d\Pr\left(\overline{R}_{k}\leq r\right)}{dr}dr\\
 & =\frac{\int_{-\infty}^{\infty}\exp\left(-\frac{\left(v-Ar\right)^{2}}{2B}-\frac{r^{2}}{2C}\right)dr}{2\pi\sqrt{BC}}=\frac{\exp\left(-\frac{v^{2}}{2\left(A^{2}C+B\right)}\right)}{\sqrt{2\pi\left(A^{2}C+B\right)}},
\end{align*}
so that $V_{k}\sim\mathcal{N}\left(0,\:A^{2}C+B\right)=\mathcal{N}\left(0,\:\mathbf{a}^{\mathsf{T}}\boldsymbol{\Sigma}\mathbf{a}\right)$,
for $\boldsymbol{\Sigma}$ as in (\ref{eq:Sigma}), completing the
proof.
\end{proof}

\end{document}